\documentclass[11pt]{amsart}
\usepackage[vmargin=1.2in,hmargin=1.2in]{geometry}
\title[A Finitary Approach to Coarse Separation of Euclidean spaces]{A finitary Approach to Coarse Separation of Euclidean spaces}

\author{Harsh Patil}
\address{School of Mathematics, University of Bristol, Bristol, UK.}
\email{cr22307@bristol.ac.uk}
\date{\today}

\usepackage{amsmath,amsthm,amsfonts,graphicx,amssymb}
\usepackage{hyperref}
\usepackage{xcolor}
\usepackage{epigraph}
\usepackage{tikz}
\usepackage{tikz-cd}
\usepackage{comment}
\usepackage{float}
\usepackage{comment}

\numberwithin{equation}{section}
\newtheorem{theorem}[equation]{Theorem}

\newtheorem{corollary}[equation]{Corollary}
\newtheorem{lemma}[equation]{Lemma}

\theoremstyle{definition}

\newtheorem{definition}[equation]{Definition}
\newtheorem{remark}[equation]{Remark}

\newtheorem*{theorem*}{Theorem}

\newtheoremstyle{citing}
  {3pt}
  {3pt}
  {\itshape}
  {}
  {\bfseries}
  {}
  {.5em}
  {\thmnote{#3}}

\theoremstyle{citing}

\newcommand{\ori}{\partial_{0}}
\newcommand{\ter}{\partial_{1}}

\def\XXint#1#2#3{{\setbox0=\hbox{$#1{#2#3}{\int}$}
\vcenter{\hbox{$#2#3$}}\kern-.5\wd0}}

\numberwithin{equation}{section}
\begin{document}
\maketitle

\begin{center}
    \textit{We are usually convinced more
easily by reasons we have found
ourselves than by those which have
occurred to others... \vspace{5mm} -Pascal}

\end{center}

\begin{abstract}
   We give a novel proof of the fact that every coarsely separating family of subsets of the Euclidean space $\mathbb{R}^{d}$ must have asymptotic dimension at least $d-1$. The proof only uses singular homology/cohomology and standard facts from algebraic topology such as Alexander duality.  We do this by first reducing the problem to a finitary version of it. Using our approach, it follows immediately that every coarsely separating family of subsets of a $d$-dimensional Euclidean building or a product of $d$ geodesic, geodesically complete metric spaces has asymptotic dimension at least $d-1$. As a corollary, we obtain obstructions to coarse embeddings of Euclidean spaces into certain fundamental groups of graphs of groups.
\end{abstract}

\section{Introduction}
A subset $A$ of a topological space $X$ is said to \emph{separate} $X$ if the complement $X \setminus A$ consists of more than one connected component. The classical Jordan curve theorem states that every simple closed curve separates the Euclidean plane, and that the complement consists of exactly two connected components. The notion of ``coarse separation’' is a coarse-geometric analogue of topological separation (cf. Section \ref{subsection:coarsesep} for the definition). Suppose $X$ is a connected metric space. If a subset $A$ coarsely separates $X$, then there exists $L>0$ such that $X \setminus N_L(A)$ has more than one component. Moreover, for all $R>0$, there exist points $x,y \in X$ lying in distinct components of $X \setminus N_L(A)$ such that $d(x,A)>R$ and $d(y,A)>R$.
\par
In geometric group theory, coarse separation arises quite naturally. If a finitely generated group $G$ (equipped with some word-metric) splits over a subgroup $C$ as an amalgam or an HNN extension then $C$ coarsely separates $G$. Stallings' theorem about ends states that the Cayley graph of a finitely generated group $G$ is coarsely separated by a bounded subset  if and only if $G$ splits as an amalgam or an HNN extension over one of its finite subgroups. Coarse separation is a key ingredient in the proof of quasi-isometric rigidity for certain groups \cite{10.4310/jdg/1214459217},\cite{MosherSageevWhyte2011}.  
Tessera–Bensaid–Genevois \cite{Bensaid2024CoarseSA} define a notion of coarse separation for families of subspaces of a metric space. They show that certain spaces cannot be separated by families of subspaces of subexponential growth. Examples include symmetric spaces of non-compact type other than $\mathbb{H}^2_{\mathbb{R}}$, horocyclic products of trees, and some of Bourdon’s buildings.
\par
Asymptotic dimension, denoted by $\operatorname{asdim}$, is a large-scale analogue of Lebesgue covering dimension for metric spaces. Roughly speaking, a metric space $X$ has asymptotic dimension at most $n$ if, at every sufficiently large scale, it can be decomposed into uniformly bounded pieces in such a way that no point lies in more than $(n+1)$ pieces and two pieces that do not intersect are sufficiently far apart. This invariant is coarse in nature, meaning it is preserved under quasi-isometries. Since its inception by Gromov in the early 1990's it has become a central tool in geometric group theory. For a general introduction to the topic we refer the reader to \cite{BellDranishnikov2008}.       The notion of asymptotic dimension has been extended to families of metric spaces. Various families of graphs have been studied in this context \cite{Bonamy2023-es},\cite{FujiwaraPapasoglu2006},\cite{Liu2021Asymptotic},\cite{DELABIE20181036},\cite{DVORAK2025103631}.
\par
The study of the asymptotic dimension of coarsely separating subsets is relatively recent. Papasoglu and Delzant \cite{DelzantPapasoglu2010} constructed hyperbolic groups of arbitrarily high asymptotic dimension that do not split over any proper subgroup and yet have coarsely separating subsets of asymptotic dimension one. 
In a recent preprint\cite{Tselekidis2024CoarseSeparation}, Tselekidis proved that any vertex transitive graph of asymptotic dimension $n$ admits a coarsely separating subset of dimension strictly less than $n$. In the same article, Tselekidis proposes the following conjecture:
\begin{center}
    \emph{$\mathbb{R}^d$ cannot be coarsely separated by a subspace of asymptotic dimension strictly less than $d - 1$.}
\end{center}
The author proved this conjecture  \cite{Patil2025CoarseSO}, and in fact established it for a much larger class of spaces, namely, coarse $\mathrm{PD}(d)$ spaces:
\begin{theorem}\label{maintheorem}
Let $X$ be a coarse $\mathrm{PD}(d)$ space, and let $A \subseteq X$ be a subset that coarsely separates $X$. Then the asymptotic dimension of $A$ is at least $d-1$.
\end{theorem}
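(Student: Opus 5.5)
Suppose, for contradiction, that $\operatorname{asdim} A \le d-2$ while $A$ coarsely separates $X$. The plan is to push $A$ into a low-dimensional subcomplex, where coarse Poincaré duality shows it cannot separate. This is the coarse analogue of the classical fact that a subset of covering dimension at most $d-2$ cannot separate a $d$-manifold. Throughout I work with a uniformly contractible simplicial model of $X$ (a Rips complex $P_r(X)$ at a large scale), on which the coarse $\mathrm{PD}(d)$ structure gives duality maps between coarse cohomology with compact supports and coarse homology. These maps are inverse up to bounded displacement.

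\textbf{Step 1 (low-dimensional nerve).} Fix $L$ from the definition of coarse separation. Since $\operatorname{asdim} A\le d-2$, for any scale $R\gg L$ there is a cover of $A$ by uniformly bounded sets, at Lebesgue number $R$, with multiplicity at most $d-1$. Its nerve $N$ has dimension at most $d-2$. A partition of unity gives a coarse map $A\to N$, and uniform contractibility gives a coarse map $\phi:N\to P_r(X)$. Extending over skeleta, the image $K=\phi(N)$ is a subcomplex of dimension at most $d-2$ lying within distance $D=D(R)$ of $A$, with $A\subseteq N_{D'}(K)$. Hence $K$ also coarsely separates: there are points $x,y$ arbitrarily far from $K$ lying in different components of $X\setminus N_{L'}(K)$ for some fixed $L'$.

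\textbf{Step 2 (separation gives a cohomology class).} Take such $x,y$ far from $K$. Join them by a path $\gamma$, and take the $0$-cycle $[x]-[y]$, which bounds $\gamma$. Separation means that $[x]-[y]$ does not bound in the complement $X\setminus N_{L'}(K)$, even after uniformly bounded thickening. Alexander-type duality then turns this into a nontrivial class in degree $d-1$ supported near $K$. Concretely, the $d$-chain dual to the $0$-cochain that is $1$ on the component of $x$ has boundary a $(d-1)$-cycle $\sigma$ supported in $N_{C}(K)$. This cycle links $[x]-[y]$ with intersection number $1$, and is far from nullhomologous in the sense that any bounding chain must pass near $x$ or $y$.

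\textbf{Step 3 (contradiction by dimension).} Because $K$ is at most $(d-2)$-dimensional and the retraction-type maps to $K$ have bounded displacement, $\sigma$ can be pushed into $K$ by a bounded homotopy. There it is zero at the chain level, since $C_{d-1}(K)=0$. The homotopy supplies a chain $\tau$ with $\partial\tau=\sigma$ and support within $C''$ of $K$. Choosing $x,y$ with $d(x,K),d(y,K)>C''$, the linking number of $\sigma$ with $[x]-[y]$ is then computed as $\tau\cdot\partial\gamma$, which is $0$ because $\tau$ misses $x$ and $y$. This contradicts the linking number being $1$.

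\textbf{Main obstacle.} The hard part is Step 2: making the duality quantitative, so that every constant depends only on $L$, $R$ and the $\mathrm{PD}$ constants and not on $x,y$. The distances of $x,y$ from $K$ must then be chosen after all of those constants are fixed. A natural first attempt is to localize to a large ball around $x$ and $y$ and apply the metric (coarse) Poincaré duality chain maps there, tracking their displacement.
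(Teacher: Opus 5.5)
Your overall shape is right: a $(d-1)$-cycle $\sigma$ near the separating set that links two deep points, killed by factoring through a $(d-2)$-dimensional nerve. But Step~3 is false as justified, and the detour through $K$ in Step~1 is what breaks it. The simplicial dimension of $K$ plays no role. In $\mathbb{R}^2$, the $0$-dimensional subcomplex $K=\mathbb{Z}\times\{0\}$ has a nearest-point retraction $N_C(K)\to K$ of bounded displacement. Yet $\mathbb{R}^2\setminus N_1(K)$ is two half-planes, and the $1$-cycle $\sigma$ (essentially the line at height $1$) bounds only chains that reach $x$ or $y$. A coarse retraction onto a low-dimensional complex gives no chain map into it. The only chain-level way to kill $\sigma$ by dimension is a simplicial map from the full subcomplex near $A$ into the nerve $N$. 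Such a map exists only for chains whose thickness around $A$ is small compared with the Lebesgue number $R$. Your ordering cannot arrange this. To deduce separation by $K$ from separation by $A$ you need $N_{L'}(K)\supseteq N_L(A)$, so $L'\ge L+D'$. But $D'$, like $D$, is controlled only by the mesh of the cover, which is at least $R$. So $\sigma$ is only known to lie in a neighbourhood of $A$ of thickness comparable to the mesh, never small relative to $R$, and the constants are circular.

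The repair is to drop $K$ and fix the scales in the order $L\to\lambda\to g(\lambda)\to(x,y)$.
\begin{itemize}
\item Let $\Omega$ be the component of $X\setminus N_L(A)$ containing $x$, and set $\sigma=P(\delta\chi_\Omega)=\pm\partial P(\chi_\Omega)$. Here $P,\bar P$ are the bounded-displacement duality chain maps of the coarse $\mathrm{PD}(d)$ structure, extended to locally finite chains. Then $\sigma$ lies within $L+c_X$ of $A$, with $c_X$ depending only on $X$. This is your Step~2, and it is routine rather than the main obstacle.
\item Only now choose a cover of $A$ with Lebesgue number $\lambda\gg L+c_X$ and multiplicity at most $d-1$. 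Send each vertex $z$ near $A$ to a cover element containing the $\lambda/2$-ball about a nearest point of $A$; this defines a simplicial map $\psi$ into $N$.
\item Uniform acyclicity gives a chain map $\phi_\#\colon C_*(N)\to C_*(X)$ and a chain homotopy $h$ from $\phi_\#\psi_\#$ to the identity, both with displacement bounded in terms of $g(\lambda)$. (A coarse $\mathrm{PD}(d)$ space is uniformly acyclic, not necessarily uniformly contractible, but chain maps suffice.)
\item Since $N$ has no $(d-1)$-simplices, $\sigma=\partial h(\sigma)$, with $h(\sigma)$ within some $C''(\lambda)$ of $A$.
\item Finally choose $x,y$ deeper than $C''$ plus the displacement constants of the duality. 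Then $\bar P\bigl(P(\chi_\Omega)-h(\sigma)\bigr)$ is a $0$-cocycle, hence constant. But it equals $\chi_\Omega$ at $x$ and at $y$, a contradiction. This replaces your intersection numbers, which have no chain-level meaning in a general coarse $\mathrm{PD}(d)$ space.
\end{itemize}
The essential point is a fine scale where $\sigma$ lives, a coarse scale where its filling lives, and a $(d-2)$-dimensional nerve in between. This is exactly what Lemma~\ref{lemmahomology} encodes in the paper: refinement maps from covers of mesh at most $\lambda$ to covers of Lebesgue number at least $g(\lambda)$ factor through a $k$-dimensional nerve. It is also what the direct-limit definition of Roe's coarse cohomology encodes in the Banerjee--Okun coarse Alexander duality used for the cited proof of this theorem. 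The paper itself reproves the statement only for $\textrm{QF}_d$ spaces, using cubes, disk-shaped nerves and Theorem~\ref{poincarelefschetz}.
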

For a definition of coarse $\mathrm{PD}(d)$ spaces see \cite{BanerjeeOkun2025}. 
One of the main tools in the proof of Theorem~\ref{maintheorem} is the coarse Alexander duality theorem of Banerjee–Okun \cite{BanerjeeOkun2025}. This result is formulated in the language of Roe’s coarse cohomology, and its use renders the proof of Theorem~\ref{maintheorem} somewhat opaque. It is also natural to ask whether there are spaces which are not coarse $\mathrm{PD}(d)$ but satisfy the conclusion of Theorem \ref{maintheorem}. 
\par
In this article, we give a more illuminating proof of Theorem~\ref{maintheorem} in the special case $X = \mathbb{R}^d$, and we extend the conclusion to a broader class of spaces, namely those satisfying property $QF_d$ i.e., spaces having a rich collection of $d$-dimensional quasiflats (cf. Section \ref{QF} for the definition). We use only ordinary (simplicial/singular) cohomology in our proof. 
 
In addition to $\mathbb{R}^d$, $QF_d$ is satisfied by any Euclidean building of dimension $d$ and any $\ell_\infty$ product of $d$ geodesic, geodesically complete metric spaces.  We believe that the proof strategy should be applicable to an even larger collection of spaces such as the $d$-dimensional rank one symmetric spaces and $d$-dimensional hyperbolic buildings. We also work in the more general framework, introduced by Bensaid-Tessera–Genevois \cite{Bensaid2024CoarseSA}, in which they consider families of separating subsets. Our main result is the following:

\begin{theorem}\label{main}
    Let $X$ be a metric space that satisfies property $\textrm{QF}_{d}$. Let $\mathcal{A}$ be a family of subsets of $X$ such that $\mathcal{A}$ coarsely separates $X$. Then, $\operatorname{asdim}(\mathcal{A})\geq d-1$.    
\end{theorem}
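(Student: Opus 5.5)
We argue by contradiction and reduce to a finitary statement about a single large cube. Suppose $\operatorname{asdim}(\mathcal{A})\le d-2$. Then for every scale $r>0$ there is a uniform $D=D(r)$ such that every $A\in\mathcal{A}$ has a cover by sets of diameter at most $D$, each point lying in at most $d-1$ of them, with $r$-disjoint colours. Coarse separation gives an $L>0$ such that, for every $R$, some $A\in\mathcal{A}$ and points $x,y$ with $d(x,A),d(y,A)>R$ lie in different components of $X\setminus N_L(A)$. Property $\mathrm{QF}_d$ should let us take a $d$-dimensional quasiflat (a uniformly biLipschitz or quasi-isometric copy of a large Euclidean cube $Q$) containing $x$ and $y$, or at least passing near them, with constants independent of $R$. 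Pulling $A$ back to $Q$ then gives a subset $B\subseteq Q$ that separates two points at distance $\gg R$ from $B$ within $Q$, up to uniform thickening. The asdim covers pull back to covers of $B$ at a proportional scale. So the problem reduces to the following finitary statement, which is uniform in $R$.

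\emph{Finitary lemma.} There is no sequence of subsets $B_R\subseteq[0,R']^d$ with the following properties: $B_R$ has a cover of multiplicity $\le d-1$ by sets of diameter $\le D$, with Lebesgue number $\ge r$, where $r\gg L$ is fixed and $D$ does not depend on $R$; and $N_L(B_R)$ separates two points of the cube that lie at distance $>R$ from $B_R$.

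To prove the lemma, take the nerve $K$ of the cover of $N_L(B_R)$ obtained by thickening the cover elements. Since the cover has multiplicity $\le d-1$, $K$ has dimension $\le d-2$. A partition of unity gives a map $\phi\colon N_L(B_R)\to K$. In the other direction, choosing points and straight-line simplices gives a map $\psi\colon K\to N_{L'}(B_R)$, where $L'$ is controlled by $D$. The composite $\psi\circ\phi$ is $D'$-close to the inclusion, hence homotopic to it inside $N_{L''}(B_R)$.

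Now apply Alexander duality in a sphere $S^d\supset Q$ to the compact polyhedral neighbourhood $N=N_{L''}(B_R)$, together with the boundary of the cube if needed; we can handle the boundary by doubling, or by using the relative version for $(Q,\partial Q)$. The points $x,y$ lie in different components of $Q\setminus N_L(B_R)$ and stay far from $N$ as long as $R\gg L''$. So there is a class in $\tilde H_0(Q\setminus N_L)$ that survives in $\tilde H_0(Q\setminus N)$. By duality this corresponds to a class in $\check H^{d-1}(N_L)$, or $H^{d-1}(N_L,N_L\cap\partial Q)$, that is nonzero on restriction from $N$. By naturality of duality for the inclusion $N_L\subseteq N$, this restriction $H^{d-1}(N)\to H^{d-1}(N_L)$ factors through $H^{d-1}(K)=0$, since the inclusion factors up to homotopy as $\psi\circ\phi$. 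This is a contradiction.

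The main obstacle is the middle transfer step: producing $x,y$ and a quasiflat through them, and making the pullback of $A$ to the flat still separate at a coarse scale with uniform constants. The reason is that the quasiflat is only coarsely embedded, so components in $X$ and in the flat must be compared. First I will use a retraction or coarse Lipschitz projection onto the quasiflat that is provided by $\mathrm{QF}_d$. Paths in the flat avoiding the pulled-back thickening then map to coarse paths in $X$ avoiding $N_L(A)$, which transfers separation. A secondary technical point is handling $\partial Q$, via relative duality or by taking $R'$ much larger than $R$ and coning.
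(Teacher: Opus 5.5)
Your reduction to a single large cube is essentially the paper's Theorem~\ref{reduction}. Your proof of the finitary statement, however, takes a genuinely different and more classical route. The paper never maps the space continuously into a nerve. It replaces the cube by the nerve of a purpose-built cover that is a PL disk with boundary-respecting refinement maps (Section~\ref{section:covers}), and takes the subcomplex $B_i$ of simplices meeting $A_N$. It then applies PL Lefschetz duality to the polyhedron $D_i=B_i\cup\partial X_i$ and absorbs the class of $\partial X_i\cong S^{d-1}$ by a rank count ($2$ versus $1$). Finally it excises down to $(N(\mathcal U_i),F_i)$, where Lemma~\ref{lemmahomology}(3) kills all degrees $>d-2$.

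You instead factor the inclusion $N_L(B)\hookrightarrow N_{L''}(B)$ up to homotopy through a $(d-2)$-dimensional nerve: a partition of unity one way, an affine realization back, and a straight-line homotopy in the convex cube. You then apply duality to the thickenings themselves. This is shorter and dispenses with Section~\ref{section:covers} and all the refinement-map bookkeeping. The price is twofold. The thickenings are not polyhedra, so you need \v{C}ech cohomology (or a polyhedral approximation). And the boundary of the cube must be handled by hand, which is exactly what the paper's disk nerves and property (5) of Theorem~\ref{thm:cover} are designed to do.

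That boundary step is the one place where your sketch does not go through as written. Plain duality in $S^d$ fails, because the outside of $Q$ reconnects the components (take $B$ a slab across $Q$). The relative version with $\partial Q$ itself also fails: neither $\psi$ nor the straight-line homotopy sends $N\cap\partial Q$ into $\partial Q$, so you get no factorization of pairs. Doubling has a similar problem, since the double is not convex and your straight-line simplices and homotopies are no longer available.

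What works is nested collars. Let $\delta$ be the mesh of the thickened cover, let $E_\rho=S^d\setminus(\rho,R'-\rho)^d$, and set $E^1=E_\rho$, $E^2=E_{\rho+2\delta}$, $N^1=N_L(B)$, $N^2=N_{L+\delta}(B)$. Apply duality to $A_1=N^1\cup E^1\subseteq A_2=N^2\cup E^2$. Since $E^j$ is a ball, $\check H^{d-1}(A_j)\cong\check H^{d-1}(A_j,E^j)$, and this excises to $\check H^{d-1}(N^j,N^j\cap E^j)$. Let $K'\subseteq K$ consist of the simplices whose sets have a common point in $E^1$. Then your $\phi$, $\psi$ and the homotopy become maps of pairs, because nothing moves more than $\delta$. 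Moreover $\check H^{d-1}(K,K')=0$, since $K$ has no $(d-1)$-cochains.

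The separating class $[x]-[y]$ lives in $\tilde H_0(S^d\setminus A_2)$ and has nonzero image in $\tilde H_0(S^d\setminus A_1)$; note this direction, which your wording reverses. This holds provided $x,y$ lie farther than $\rho+2\delta$ from $\partial Q$, which you can arrange by taking $R'$ large, since the quasiflat is all of $\mathbb R^d$.

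Finally, $\mathrm{QF}_d$ gives no retraction onto quasiflats, and you do not need one. Push a unit-step discretization of a path in the flat forward and join consecutive images by geodesics in $X$, as in the proof of Theorem~\ref{reduction}. This uses that $X$ is geodesic, which the paper also assumes at that point.
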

\begin{corollary}
    If $X$ is the Euclidean building of dimension $d$, or a product $X=\Pi_{i=1}^{n}X_i$ of $d$ geodesic, geodesically complete metric spaces then any family which coarsely separates $X$ has asymptotic dimension at least $d-1$.    
\end{corollary}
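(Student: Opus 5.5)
The corollary will follow from Theorem~\ref{main} once we check that both classes of spaces satisfy property $\mathrm{QF}_d$. So the plan is to verify $\mathrm{QF}_d$ directly in each case and then invoke Theorem~\ref{main}.

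\emph{Euclidean buildings.} Let $X$ be a $d$-dimensional Euclidean building. We use the standard facts that $X$ is the union of its apartments, each apartment is an isometrically embedded copy of $\mathbb{R}^d$, and any two points (indeed any two chambers, or a chamber and a sector germ) lie in a common apartment. Whatever richness $\mathrm{QF}_d$ demands should then be supplied by apartments. Plausible requirements are that every ball, or every pair of points, or every finite configuration relevant to a separation witness, is contained in a $d$-flat, and that the flat is uniformly quasi-isometric to $\mathbb{R}^d$. Apartments are isometric flats, so the constants are uniform, namely $(1,0)$. If $\mathrm{QF}_d$ instead asks that points far from $\mathcal{A}$ in distinct components be joinable inside a common flat, we use the building axiom that any two points lie in an apartment.

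\emph{Products.} Let $X=\prod_{i=1}^d X_i$ with the $\ell_\infty$ metric (or any product metric, which is bi-Lipschitz to it), where each $X_i$ is geodesic and geodesically complete. Through any point $x_i\in X_i$ and in any direction of a given geodesic segment there is a bi-infinite geodesic line $\gamma_i\colon\mathbb{R}\to X_i$. The product $\gamma_1\times\dots\times\gamma_d\colon\mathbb{R}^d\to X$ is then an isometric embedding of $(\mathbb{R}^d,\ell_\infty)$, hence a $\sqrt d$-bi-Lipschitz flat. Given two points $x,y\in X$, choose geodesics $[x_i,y_i]$ and extend each to a line $\gamma_i$. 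The resulting flat contains both $x$ and $y$. This gives the needed abundance of flats with uniform constants.

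The only real obstacle is matching these constructions to the precise formulation of $\mathrm{QF}_d$ in Section~\ref{QF}. For example, if the property requires flats passing through prescribed points at prescribed large scales, or uniform constants independent of the configuration, the constructions above meet this because all constants are universal. Once $\mathrm{QF}_d$ is checked, Theorem~\ref{main} gives $\operatorname{asdim}(\mathcal{A})\ge d-1$ for any coarsely separating family $\mathcal{A}$.
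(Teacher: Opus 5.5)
Your proposal is correct and is the paper's argument: the corollary is Theorem~\ref{main} applied after verifying $\mathrm{QF}_d$, using that any two points of a Euclidean building lie in an apartment and, for an $\ell_\infty$ product, that the product $\gamma_1\times\dots\times\gamma_d$ of geodesic lines through the coordinates gives the required flat (both spaces are also geodesic, which the reduction in Theorem~\ref{reduction} uses). You do not need to hedge, since $\mathrm{QF}_d$ asks exactly for a $K$-quasi-isometric embedding of $\mathbb{R}^d$, with $K$ uniform, whose image contains a given pair of points; just note that, as the paper implicitly assumes, ``geodesically complete'' must mean that every geodesic segment extends to a bi-infinite geodesic \emph{line}, because extension only to local geodesics (as in $S^1$) does not produce flats.
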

To prove this result, we first reduce the problem to the following finitary version:

\begin{theorem}\label{mainresult}
Let $(L_n)_n$ and $(b_n)_n$ be sequences of positive real numbers tending to infinity.  
For each $n$, let $C_n=[0,L_n]^d\subset\mathbb{R}^d$ be the $d$-dimensional cube equipped with the $\ell_\infty$-metric. For each $n$, let $A_n\subseteq C_n$ such that 
\begin{enumerate}
\item $C_n\setminus A_n$ has more than one connected component;
\item there exist points $x_n,y_n\in C_n$ lying in different connected components of $C_n\setminus A_n$ such that
$$
d(x_n,A_n\cup\partial C_n)>b_n, \qquad d(y_n,A_n\cup\partial C_n)>b_n .
$$
\end{enumerate}
Let $\mathcal{A}=\{A_n\}_n$. Then $\operatorname{asdim}(\mathcal{A})\ge d-1$.
\end{theorem}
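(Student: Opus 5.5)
We argue by contradiction. Suppose $\operatorname{asdim}(\mathcal{A})\le d-2$. Then there is a function $D\colon(0,\infty)\to(0,\infty)$ such that for every $r>0$ and every $n$, the set $A_n$ has a cover $\mathcal{U}_n$ by sets of diameter at most $D(r)$ in which every ball of radius $r$ meets at most $d-1$ members. We fix a large $r$, to be chosen relative to the dimension only; one can take $r=10$ after rescaling. We then pick $n$ with $b_n\gg D(r)$ and $L_n\gg D(r)$.

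The first step is to thicken $A_n$ into an open set $W_n=N_{r/2}(A_n)\cap C_n$ and form the nerve $N_n$ of the cover $\{N_{r/2}(U)\cap C_n : U\in\mathcal{U}_n\}$ of $W_n$. By the multiplicity bound, $N_n$ has dimension at most $d-2$. A partition of unity subordinate to this cover gives a map $\phi\colon W_n\to N_n$. We also build a map $\psi\colon N_n\to C_n$ back, defined skeleton by skeleton. A vertex $U$ goes to a point of $U$. Each simplex is filled inside a convex set: since $C_n$ is a convex cube in $\mathbb{R}^d$, we can use the linear extension. Its image lies in the $2(D(r)+r)$-neighbourhood of the corresponding sets, so $\psi\circ\phi$ is homotopic to the inclusion $W_n\hookrightarrow C_n$ via the straight-line homotopy. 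That homotopy moves points by at most $R_0:=2D(r)+2r$. The upshot is that the inclusion $W_n\hookrightarrow N_{R_0}(W_n)\cap C_n$ factors up to homotopy through a complex of dimension $\le d-2$. Consequently, the induced map on $\check H^{d-1}$ (or on singular cohomology of compact pieces) is zero.

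The second step converts the separation hypothesis into a nonzero class in $H^{d-1}$, using Alexander duality. We double the cube, or equivalently regard $C_n$ inside $S^d=\mathbb{R}^d\cup\{\infty\}$ and add the boundary to the separating set. Let $K=\overline{W_n}\cup\partial C_n$ and $K'=\overline{N_{R_0}(W_n)}\cup\partial C_n$. Because $b_n>R_0$, the points $x_n,y_n$ lie outside $K'$ and in different components of $S^d\setminus K$. The reason is that any path from $x_n$ to $y_n$ in $C_n$ avoiding $A_n$ is ruled out by hypothesis (2). By Alexander duality, $\tilde H_0(S^d\setminus K)\cong \check H^{d-1}(K)$, naturally with respect to the inclusion $K\subseteq K'$. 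The reduced class $[x_n]-[y_n]$ is nonzero in $\tilde H_0(S^d\setminus K)$. For the argument we need its image in $\tilde H_0(S^d\setminus K')$ to be nonzero as well. Dually, the restriction $\check H^{d-1}(K')\to\check H^{d-1}(K)$ must hit a class $\alpha$ that pairs nontrivially with the $0$-cycle $[x_n]-[y_n]$.

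Here lies the main obstacle. Thickening $A_n$ can merge the components of $x_n$ and $y_n$, so $x_n$ and $y_n$ may become connected in $S^d\setminus K$ only after passing near $A_n$. To handle this, I would instead use the linking pairing directly. Take a path $\gamma$ from $x_n$ to $y_n$. The separation means that every such path meets $A_n$. So the $0$-sphere $\{x_n,y_n\}$ is linked with $K$, i.e. it has nonzero linking number with some $(d-1)$-cycle $z$ supported in $K$. This $z$ is the boundary of the component of $x_n$, restricted to $C_n$, where $\partial C_n$ is handled by coning it off at $\infty$. The factorization from step one pushes $z$, via $\psi\circ\phi$, to a homologous cycle $z'$ in $K'$ that passes through the $(d-2)$-dimensional complex $N_n$. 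Hence $z'=\psi_*\phi_*(z)$ bounds in the image of $N_n$, plus cones over $\partial C_n$: since $H_{d-1}(N_n)=0$, $\phi_*z$ is a boundary there. Also, the homotopy from $z$ to $z'$ stays within distance $R_0<b_n$ of $K$, so it avoids $x_n$ and $y_n$. Linking number is invariant under such homotopies and vanishes for a cycle bounding a chain that misses $x_n,y_n$. The one step that needs care is this last condition: the bounding chain lies in $\psi(N_n)\cup\partial C_n$-cones, and these lie within $R_0$ of $A_n\cup\partial C_n$, so they miss $x_n$ and $y_n$. This is the contradiction.
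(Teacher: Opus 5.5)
Your Step~1 (a partition-of-unity map $\phi$ into a nerve of dimension at most $d-2$, a linear map $\psi$ back, and the straight-line homotopy) is sound. It is a genuinely different, more continuous route than the paper's refinement maps between nerves of covers of the cube. The gap is at $\partial C_n$, which is exactly where the paper spends most of its effort.

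First, the ``main obstacle'' you name is not one. Since $K\subseteq K'$, we have $S^d\setminus K'\subseteq S^d\setminus K$, so thickening can only split components. The natural map goes $\widetilde H_0(S^d\setminus K')\to\widetilde H_0(S^d\setminus K)$, and $[x_n]-[y_n]$ survives automatically. The real problem is that $K$ and $K'$ contain $\partial C_n$, about which Step~1 says nothing. In fact $\check H^{d-1}(K')\to\check H^{d-1}(K)$ is never zero: the class Alexander dual to $[x_n]-[\infty]$, detected by the sphere $\partial C_n$, always survives. So its nonvanishing contradicts nothing. The paper handles this in three steps. It shows that $f^*$ on $H^{d-1}(D_i)$ has image of rank at least two, while $\partial X_i\cong\mathbb{S}^{d-1}$ accounts for at most rank one. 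It then passes to the relative groups $H^k(B_i,K_i)$ and $H^k(N(\mathcal{U}_i),F_i)$. Finally it invokes the \emph{relative} factorization of Lemma~\ref{lemmahomology}(3).

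In your linking-number version the same issue reappears and is not resolved. The cycle $z=\partial(\text{component of }x_n)$ generally has a large piece $z_2$ on $\partial C_n\setminus\overline{W_n}$ (take $A_n$ a slab cutting the cube in two). There $\phi$ is undefined, so $\psi_*\phi_*(z)$ makes no sense. For the remaining piece $z_1\subset W_n$, $\phi_*z_1$ is not a cycle, so $H_{d-1}(N_n)=0$ does not make it a boundary.

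What is missing is a relative form of Step~1. One way to supply it:
\begin{itemize}
\item Replace $A_n$ by a small grid-polyhedral neighbourhood, so that $z=z_1+z_2$ is an honest cycle with $z_1\subset W_n$ and $z_2\subset\partial C_n$.
\item Let $S\subseteq N_n$ be the full subcomplex spanned by members within $r$ of $\partial C_n$. Then $\phi_*z_1$ is a relative cycle of $(N_n,S)$. Since $H_{d-1}(N_n,S)=0$ by dimension, $\phi_*z_1=\partial c+e$ with $e$ supported in $S$.
\item Let $Q$ be the prism of the straight-line homotopy over $z_1$. Then $z=\partial(\psi_*c+Q)+z''$, where $z''=\psi_*e+(\text{prism over }\partial z_1)+z_2$ is a cycle.
\item This $z''$ is supported in $E'=(S^d\setminus\operatorname{int}C_n)\cup\{p\in C_n: d(p,\partial C_n)\le R_1\}$, with $R_1$ of order $D(r)+r<b_n$. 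Since $E'$ is a closed ball missing $x_n,y_n$, you can cone $z''$ off to $\infty$ inside $E'$.
\end{itemize}
Only then does the linking number vanish.

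With this step supplied your argument goes through, and it is shorter than the paper's: no Lefschetz duality on nerves and no rank-two bookkeeping. As written, however, the boundary case, which is the actual content of Lemmas~\ref{mainlemma}--\ref{lemma:penultimate}, is asserted rather than proved.
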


\begin{figure}[H]
    \centering
    \includegraphics[width=170pt]{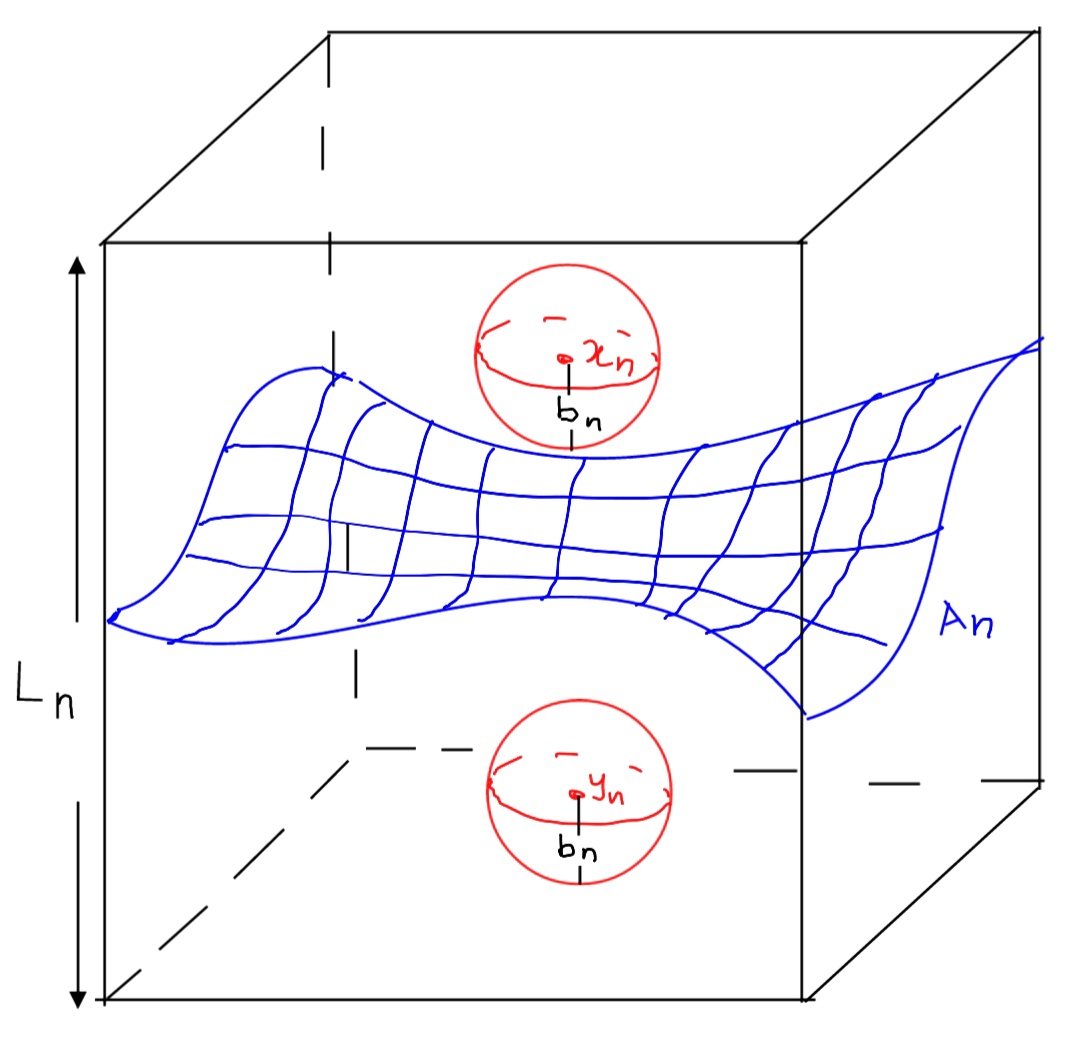}
    \caption{}
\end{figure}
We believe that this result is of independent interest. The proof of Theorem \ref{mainresult} uses Theorem \ref{poincarelefschetz}, a variant of the classical Alexander duality theorem for manifolds with boundary. Another main ingredient of the proof is Lemma \ref{lemmahomology} which provides a homological factorization property for refinement maps between nerves of covers of families with uniformly bounded asymptotic dimension. 
\par
Finally, we give an application towards ruling out certain coarse embeddings. A coarse embedding between finitely generated groups is a map that preserves large-scale geometry while ignoring small-scale distortion. Concretely, it sends pairs of group elements that are far apart (with respect to word metrics) to points that remain far apart, up to uniform control functions (cf. section \ref{section:quasiisometry} for the definition). 
So far, most of the focus in the study of coarse embeddings has been on invariants that are coarse monotone, i.e., invariants $\mathbb{I}$ equipped with some partial order $(\mathbb{I},\leq)$ and such that if a  space $X$ coarsely embeds in another space $Y$ then $\mathbb{I}(X) \leq \mathbb{I}(Y)$. Examples of such invariants include growth, asymptotic dimension, separation profiles, $L_p$-compression exponents and Poincare profiles. Bensaid-Tessera-Genevois \cite{Bensaid2024CoarseSA} use coarse separation to study coarse embeddings onto two classes of finitely generated groups: amalgamated products and wreath products.
 Inspired by their approach, in section \ref{applications} we give an application of Theorem \ref{main} towards understanding coarse embeddings of spaces with property $\textrm{QF}_d$ into fundamental groups of certain graphs of groups.
 \\
\paragraph{\textbf{Conventions.}}
\begin{itemize}
    \item Throughout this paper, all homology and cohomology groups are taken with coefficients in $\mathbb{F}_2$, and `$\operatorname{rank}$' denotes the vector space dimension over $\mathbb{F}_2$. This is done mainly to avoid issues pertaining to orientation.  
    \item In Section \ref{section:covers}, we work with reduced homology, whereas ordinary homology and cohomology are used throughout the rest of the paper. This makes the proof of Lemma \ref{lemthree} conceptually easier, and this distinction causes no difficulties since Lemma \ref{lemthree} concerns dimensions greater than zero.

    \item We endow $\mathbb{R}^d$ with the $l_\infty$ metric. As $l_1, l_2$
 and $l_\infty$ metrics on $\mathbb{R}^d$ are quasi-isometric to each other all the statements hold when one replaces the $l_\infty$ metric by either the $l_1$ or $l_2$ metric. 
 \item Let $X$ be a metric space and let $A $ be a subset of $X$. For $L>0$ we denote the closed $L$-neighborhood of $A$ by $N_L(A)$. 
 \end{itemize}

\section{Preliminaries}

\subsection{Coarse separation of metric spaces}\label{subsection:coarsesep}

We recall what it means for a family $\mathcal{S}$ of subsets to coarsely separate a metric space $X$. 

\begin{definition}[coarse separation]\cite{Bensaid2024CoarseSA}
Let $X$ be a connected metric space and let $\mathcal{S}$ be a family of subsets
of $X$. We say that $\mathcal{S}$ coarsely separates $X$ if:
\begin{enumerate}
    \item there exists $L>0$ such that $X\setminus N_{L}(S)$ contains more than one path-component for all $S\in \mathcal{S}$. 
    \item for all $D>0$, there exist $S\in \mathcal{S}$ and points  $x,y\in X$ such that $x$ and $y$ lie in distinct components of $X\setminus N_L(S)$ such that $d(x,S)>D$ and $d(y,S)>D$. 
\end{enumerate}
\end{definition}

\begin{remark}\label{remark:zero}
    The constant $L$ in the above definition can be assumed to be zero by replacing the family $\mathcal{S}$ by $\mathcal{S}^{+L}=\{N_L(U)|U\in \mathcal{S}\}$. The family $\mathcal{S}^{+L}$ has the same asymptotic dimension as $\mathcal{S}$. 
\end{remark}

\subsection{Quasi-isometries and coarse embeddings}\label{section:quasiisometry}
\begin{definition}
Let $X$ and $Y$ be metric spaces and $K\geq 1$. 
A map $f:X\rightarrow Y$ is said to be a $K$-\textit{quasi-isometric embedding} if for all $x,y\in X$, 
$$ K^{-1}d(x,y)-K\leq d(f(x),f(y))\leq Kd(x,y)+K.$$
$f$ is said to be a $K$-\textit{quasi-isometry} if, in addition to being a $K$-quasi-isometric embedding, $f$ satisfies the following condition:
\begin{center}
    for all $y \in Y$, there exists a $y'\in \operatorname{Im}(f)$ such that $d(y,y')\leq K$.  
\end{center}
\end{definition}
Let $\mathcal{A}$ and $\mathcal{B}$ be two families of metric spaces. We say that $\mathcal{A}$ \emph{quasi-isometrically embeds} into $\mathcal{B}$ if for some fixed $K\geq 1$ every $A\in \mathcal{A}$ admits a $K$-quasi-isometric embedding $f:A\rightarrow B$ into some $B\in \mathcal{B}$.  
\begin{definition}[quasi-inverse]
  Let $f:X\rightarrow Y$ be a $K$-quasi-isometry. Let $K'\geq 1$. A $K'$-quasi-isometry  
$g:Y\rightarrow X$ is said to be a $K'$-quasi-inverse of $f$ if $d(f\circ g, id_Y )\leq K'$ and $d(g\circ f, id_X)\leq K'$.  
\end{definition}
 The following lemma shows that a quasi-inverse always exists. 
\begin{lemma}
   Let $X,Y$ be metric spaces. Let $K\geq 1$ and $K'=\max\{2,3K^2\}$. Given a $K$-quasi-isometry $f:X\rightarrow Y$, there exists a $K'$ quasi-inverse $g:Y \rightarrow X$ of $f$.
\end{lemma}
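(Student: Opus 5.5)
Since $f$ is coarsely surjective, I would build $g$ by a choice: for each $y\in Y$, use the quasi-isometry condition to pick a point $x_y\in X$ with $d(f(x_y),y)\le K$, and set $g(y)=x_y$. By construction $d(f\circ g(y),y)\le K\le K'$ for every $y$, which settles one of the two closeness conditions.

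Next I would check that $g$ is a quasi-isometric embedding. Take $y,y'\in Y$ and write $x=g(y)$, $x'=g(y')$. The triangle inequality gives
$$d(f(x),f(x'))-2K\le d(y,y')\le d(f(x),f(x'))+2K.$$
For the upper bound on $d(x,x')$, combine the lower quasi-isometry bound $K^{-1}d(x,x')-K\le d(f(x),f(x'))$ with the display. This yields
$$d(x,x')\le K d(y,y')+3K^2.$$
For the lower bound, use $d(f(x),f(x'))\le Kd(x,x')+K$. This yields
$$d(y,y')\le Kd(x,x')+3K, \quad\text{so}\quad K^{-1}d(y,y')-3\le d(x,x').$$
Both multiplicative constants are at most $K\le 3K^2\le K'$, and both additive constants $3K^2$ and $3$ are at most $K'$. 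Here the choice $K'=\max\{2,3K^2\}$ is generous: $3K^2\ge 3$ already because $K\ge1$.

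For coarse surjectivity of $g$, and for the bound $d(g\circ f,\mathrm{id}_X)\le K'$, take $x\in X$ and set $y=f(x)$. Then $d(f(g(y)),f(x))\le K$. The lower quasi-isometry bound gives
$$K^{-1}d(g(f(x)),x)-K\le K, \quad\text{hence}\quad d(g(f(x)),x)\le 2K^2\le K'.$$
This single estimate shows both that $g\circ f$ is $K'$-close to $\mathrm{id}_X$ and that every $x$ lies within $K'$ of $\operatorname{Im}(g)$. So $g$ is a $K'$-quasi-isometry and a $K'$-quasi-inverse of $f$.

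There is no real obstacle here. The only care needed is in tracking the constants, in particular the additive constant in the upper bound on $d(x,x')$, which is where the $3K^2$ comes from.
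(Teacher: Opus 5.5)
Your proof is correct and is essentially the paper's argument: the paper's $g=s\circ t$ is exactly your choice of $x_y$, and the same triangle-inequality estimates give $d(g(y),g(y'))\le Kd(y,y')+3K^2$ and $d(g(y),g(y'))\ge K^{-1}d(y,y')-3$. Your bound $d(g(f(x)),x)\le 2K^2$ is the correct one; the paper's displayed bound $2K/K$ uses the quasi-isometry inequality in the wrong direction, but since $2K^2\le 3K^2\le K'$ the lemma still holds. You also state explicitly that this estimate gives coarse surjectivity of $g$, which the paper leaves implicit.
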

\begin{proof}
For each $y\in Y$ there exists a $y'\in Im(f)$ such that $d(y,y')\leq K$. Let $t:Y\rightarrow Im(f)$ be a function such that $t(y)\in Im(f)$ for all $y\in Y$ and $d(y,t(y'))\leq K$. Let $s: Im(f)\rightarrow X$ be a function such that $s(y')\in f^{-1}(y')$ for all $y'\in Im(f)$. Define $ g:=s\circ t$.
\\
Let $y_1,y_2\in Y$. Then, $d(t(y_1),y_1)\leq K$ and $d(t(y_2),y_2)\leq K$. Thus, $$d(y_1,y_2)-2K\leq d(t(y_1),t(y_2))\leq d(y_1,y_2)+2K$$ Also, by definition, $f(s\circ t(y_1))=t(y_1)$ and $f(s\circ t(y_2))=t(y_2)$.\\
As 
$f$ is a $K$-quasi-isometry, 
$$K^{-1}d(s\circ t(y_1),s\circ t(y_2))-K\leq d(t(y_1),t(y_2))\leq K d(s\circ t(y_1),s\circ t(y_2))+K.$$
We have the following,
\begin{equation*}\label{one}
   d(s\circ t(y_1), s\circ t(y_2))\leq Kd(t(y_1),t(y_2))+K^2 \leq Kd(y_1,y_2)+3K^2\leq K'd(y_1,y_2)+K'
\end{equation*}
For the lower bound, 
\begin{equation*}
    d(s\circ t(y_1), s\circ t(y_2))\geq \frac{d(t(y_1),t(y_2))-K}{K}  \geq \frac{d(y_1,y_2)-3K}{K}\geq (K')^{-1}d(y_1,y_2)-K'
\end{equation*}
It remains to show that $d(f\circ g, id_X )\leq K'$ and $d(g\circ f, id_Y)\leq K'$.
Let $x \in X$ and $y=f(x)$. Let $y'=t(y)$ and $x'=s(y')=f\circ g(x)$. Then $x\in f^{-1}(y)$ and $x'\in f^{-1}(y')$. Therefore, 
$$d(x,x')\leq \frac{d(y,y')+K}{K}\leq \frac{2K}{K}.$$
Now let $y\in Y$ and let $x=g(y)=s\circ t(y)$. Then, as $s(t(y)) \in f^{-1}(t(y))$, $f\circ g(y)=t(y)$. Thus,
$d(y,f\circ g(y))\leq K$. 
\end{proof}
We recall the definition of a coarse embedding. 
\begin{definition}[coarse embedding]
Let $(X,d_X)$ and $(Y,d_Y)$ be metric spaces.
A map $f \colon X \to Y$ is called a coarse embedding if there exist non-decreasing functions
$\rho_-, \rho_+ \colon [0,\infty) \to [0,\infty)$ such that $\rho_-(r) \rightarrow \infty$ as $r\rightarrow \infty$  and
$$\rho_-\bigl(d_X(x,x')\bigr)\;\le\;d_Y\bigl(f(x),f(x')\bigr)
\;\le\;\rho_+\bigl(d_X(x,x')\bigr)\quad \text{for all } x,x' \in X$$

\end{definition}

Let $\mathcal{A}$ and $\mathcal{B}$ be two families of metric spaces. We say that $\mathcal{A}$ \emph{coarsely embeds} into $\mathcal{B}$ if for some fixed $\rho+,\rho_-$, such that $\rho_-(r), \rho_+(r) \rightarrow \infty$ as $r\rightarrow \infty$, every $A\in \mathcal{A}$ admits a mapping $f:A\rightarrow B$ into some $B\in \mathcal{B}$
such that $$\rho_-\bigl(d_X(x,x')\bigr)\;\le\;d_Y\bigl(f(x),f(x')\bigr)
\;\le\;\rho_+\bigl(d_X(x,x')\bigr)\quad \text{for all } x,x' \in A.$$

\begin{definition}[coarse equivalence]
Let $(X,d_X)$ and $(Y,d_Y)$ be metric spaces.
A map $f \colon X \to Y$ is called a coarse equivalence if it is a coarse embedding and there exists a constant $C \geq 0$ such that every point of $Y$ lies within distance $C$ of the image of $f$, that is,
$$
d_Y(y,f(X))\le C\qquad\text{for all }y\in Y.
$$
Equivalently, the $C$--neighbourhood of $f(X)$ is equal to $Y$.
\end{definition}
\subsection{Asymptotic dimension}
Let $(X,d)$ be a metric space. A cover $\mathcal{U}$ of $X$ is said to have \textit{multiplicity} less or equal to than $k$ if every $x\in X$ is contained in at most $k$ elements of $\mathcal{U}$. The \textit{Lebesgue number} of a cover $\mathcal{U}$, denoted by $\mathcal{L}(\mathcal{U})$ is the supremum over all positive real numbers $\lambda$ such that for any set $A\subseteq X$ with diameter $\operatorname{diam}(A)\leq \lambda$ there exists $U\in \mathcal{U}$ such that $A \subseteq  U$. We say that a cover $\mathcal{U}$ is a \textit{refinement} of $\mathcal{V}$ if for every $U\in \mathcal{U}$ there exists an element $V\in \mathcal{V}$ such that $U\subseteq  V$. We denote this by $\mathcal{U} \preccurlyeq \mathcal{V}$. A cover $\mathcal{U}$ is said to be $R$-bounded if $\sup_{U\in \mathcal{U}}\{\operatorname{diam}(U)\}\leq R$. 
Let $r>0$. The \textit{mesh} of a uniformly bounded cover $\mathcal{U}$ is defined as  $\textrm{mesh}(\mathcal{U})=\sup_{U\in \mathcal{U}}\{\textrm{diam}(U)|U\in \mathcal{U}\}$. A collection $\mathcal{W}$ of subsets of $X$ is said to be $r$-\emph{disjoint} if for any two elements $W,W'\in \mathcal{W}$, $d(W,W')\geq r$.
 
\begin{definition}[Asymptotic dimension]
   Let $\mathcal{X}$ be a family of metric spaces. We say that $D_{\mathcal{X}}:\mathbb{R}_{+}\rightarrow \mathbb{R}_{+}$ is an $n$-dimensional control function if for any $r>0$ and for any $X\in \mathcal{X}$, $X$ has a $D_{\mathcal{X}}(r)$-bounded cover $\mathcal{U}=\bigcup_{i=1}^{n+1}\mathcal{U}_{i}$ such that each $\mathcal{U}_{i}$ is $r$-disjoint. The asymptotic dimension of $\mathcal{X}$, denoted by  $\operatorname{asdim}(\mathcal{X})$, is the least integer $n$ such that $\mathcal{X}$ admits an $n$-dimensional control function.  
\end{definition}

The asymptotic dimension of a metric space $X$ is just the asymptotic dimension of the singleton family $\{X\}$. 

\begin{lemma}\label{asdiminvariance}
    Let $\mathcal{A}$ and $\mathcal{B}$ be families of metric spaces such that $\mathcal{A}$ coarsely embeds in $\mathcal{B}$. Then $\operatorname{asdim}(\mathcal{A}) \leq \operatorname{asdim}(\mathcal{B})$. 
\end{lemma}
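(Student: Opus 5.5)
The plan is to pull covers back along the coarse embeddings. Suppose $\operatorname{asdim}(\mathcal{B})\le n$ with $n$-dimensional control function $D_{\mathcal{B}}$. Fix functions $\rho_-,\rho_+$ witnessing that $\mathcal{A}$ coarsely embeds in $\mathcal{B}$, uniformly over all members of the family. The goal is to produce an $n$-dimensional control function $D_{\mathcal{A}}$ for $\mathcal{A}$, which gives $\operatorname{asdim}(\mathcal{A})\le n$.

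Fix $r>0$ and $A\in\mathcal{A}$. Let $f:A\to B$ be the corresponding coarse embedding into some $B\in\mathcal{B}$, and set $r'=\rho_+(r)$. By hypothesis there is a $D_{\mathcal{B}}(r')$-bounded cover $\mathcal{V}=\bigcup_{i=1}^{n+1}\mathcal{V}_i$ of $B$ in which each $\mathcal{V}_i$ is $r'$-disjoint. Define the pulled-back cover $\mathcal{U}_i=\{f^{-1}(V):V\in\mathcal{V}_i\}$ and $\mathcal{U}=\bigcup_i\mathcal{U}_i$, discarding empty sets. This covers $A$ because $\mathcal{V}$ covers $B$.

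Disjointness: take $x\in f^{-1}(V)$ and $x'\in f^{-1}(V')$ with $V\ne V'$ in $\mathcal{V}_i$. If $d(x,x')<r$, then by monotonicity $d(f(x),f(x'))\le\rho_+(d(x,x'))\le\rho_+(r)=r'$. To keep this clean I will either choose $r'$ slightly larger than $\rho_+(r)$ (say $\rho_+(r)+1$) or assume the disjointness condition is phrased with strict inequality. Either way the images would violate the $r'$-disjointness of $\mathcal{V}_i$. Hence $d(x,x')\ge r$, so each $\mathcal{U}_i$ is $r$-disjoint.

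Boundedness: if $x,x'\in f^{-1}(V)$, then $\rho_-(d(x,x'))\le d(f(x),f(x'))\le D_{\mathcal{B}}(r')$. Define $S(t)=\sup\{s:\rho_-(s)\le t\}$. This is finite because $\rho_-(s)\to\infty$ and $\rho_-$ is non-decreasing. Then $d(x,x')\le S(D_{\mathcal{B}}(\rho_+(r)+1))$. So we may take
$$D_{\mathcal{A}}(r)=S\bigl(D_{\mathcal{B}}(\rho_+(r)+1)\bigr),$$
which does not depend on the choice of $A$ in the family.

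The only delicate points are the strict-versus-nonstrict inequality in the definition of $r$-disjoint, handled by the $+1$ margin above, and the finiteness of $S$, which is exactly where the hypothesis $\rho_-\to\infty$ is used. Uniformity of the control functions across the family is what makes $D_{\mathcal{A}}$ a single function for all of $\mathcal{A}$.
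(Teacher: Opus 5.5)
Your proposal is correct and is the same argument as the paper's: pull back an $n$-dimensional cover of $B$ along $f$, then use $\rho_+$ to get $r$-disjointness and $\rho_-\to\infty$ to get a bound on the pieces that is uniform over the family. Your version is slightly more careful than the paper's in two places. The paper takes only $\rho_+(R)$-disjoint covers, which is not quite enough when $\rho_+$ is merely non-decreasing; your $+1$ margin fixes this. The paper also compares against $D(R)$ in the boundedness step, where the bound the chosen cover actually gives is $D(\rho_+(R))$, as you correctly use.
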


\begin{proof}
Let $ \operatorname{asdim}(\mathcal{B})=k<\infty$ and let $D$ be a $k$-dimensional control function for $\mathcal{B}$. Let $\rho+,\rho_-$ be functions  such that $\rho_-(r), \rho_+(r) \rightarrow \infty$ as $r\rightarrow \infty$, and every $A\in \mathcal{A}$ admits a mapping $f:A\rightarrow B$ into some $B\in \mathcal{B}$
such that $$\rho_-\bigl(d_X(x,x')\bigr)\;\le\;d_Y\bigl(f(x),f(x')\bigr)
\;\le\;\rho_+\bigl(d_X(x,x')\bigr)\quad \text{for all } x,x' \in A.$$ Given $R>0$, let $\mathcal{U}=\bigcup_{i=1}^{k+1}\mathcal{U}_{i}$ be a cover of $B$ such that $ \mathcal{U}$ is $D(\rho_+(R))$-bounded and each $\mathcal{U}_{i}$ is $(\rho_+(R))$-disjoint. Let $\mathcal{V}=\{f^{-1}(U)|U\in \mathcal{U}\}$ and let $\mathcal{V}_i=\{f^{-1}(U)|U\in \mathcal{U}_i\}$ be so that $\mathcal{V}=\bigcup_{i=1}^{k+1}\mathcal{V}_{i}$. Then, each $\mathcal{V}_i$ is $R$-disjoint. Let $N>0$ such that $\rho_-(N)>D(R)$. We claim that  $ \mathcal{V}$ is $N$-bounded.  If $ \operatorname{diam}(f^{-1}(U_i))> N$ then $\operatorname{diam}(U_i)> D(R)$, a contradiction.
\end{proof}

\begin{lemma}\label{Lebesguenumber}
    Let $\mathcal{A}$ be a family of metric spaces such that $\operatorname{asdim}(\mathcal{A})\leq k$. Then, there exists a function $g_{\mathcal{A}}:\mathbb{R}_{+}\rightarrow \mathbb{R}_{+}$ such that for each $A\in \mathcal{A}$, there exists a $g_{\mathcal{A}}(\lambda)$-bounded cover  $\mathcal{U}$ of $A$ with Lebesgue number $\geq \lambda$ and multiplicity $\leq k+1$. 
\end{lemma}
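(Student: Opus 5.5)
The plan is to build the desired cover directly from an $n$-dimensional control function, by thickening an $r$-disjoint cover. Let $k$ be such that $\operatorname{asdim}(\mathcal{A})\le k$, and let $D=D_{\mathcal{A}}$ be a $k$-dimensional control function.

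Given $\lambda>0$, I set $r=2\lambda+1$ (any $r>2\lambda$ will do). For each $A\in\mathcal{A}$ the definition supplies a $D(r)$-bounded cover $\mathcal{V}=\bigcup_{i=1}^{k+1}\mathcal{V}_i$ of $A$ in which each $\mathcal{V}_i$ is $r$-disjoint. I then define the thickened cover
\[
\mathcal{U}=\{N_\lambda(V)\cap A \mid V\in\mathcal{V}\}.
\]
The candidate function is $g_{\mathcal{A}}(\lambda)=D(2\lambda+1)+2\lambda$. It depends only on $\lambda$, which is exactly the uniformity over the family that the statement requires.

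There are three properties to check.

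\emph{Boundedness.} Each $N_\lambda(V)$ has diameter at most $\operatorname{diam}(V)+2\lambda\le D(r)+2\lambda$. So $\mathcal{U}$ is $g_{\mathcal{A}}(\lambda)$-bounded.

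\emph{Lebesgue number.} Let $B\subseteq A$ with $\operatorname{diam}(B)\le\lambda$ (and $B$ nonempty). Pick $b\in B$ and some $V\in\mathcal{V}$ containing $b$. Every point of $B$ lies within $\lambda$ of $b\in V$, so $B\subseteq N_\lambda(V)\cap A$. Hence $\mathcal{L}(\mathcal{U})\ge\lambda$.

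\emph{Multiplicity.} Suppose a point $x$ lies in both $N_\lambda(V)$ and $N_\lambda(V')$ with $V\ne V'$ in the same $\mathcal{V}_i$. Then $d(V,V')\le 2\lambda<r$, contradicting $r$-disjointness. So for each $i$ the point $x$ lies in at most one element thickened from $\mathcal{V}_i$. Since there are $k+1$ subfamilies, the multiplicity of $\mathcal{U}$ is at most $k+1$.

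The only real subtlety is the strictness in the multiplicity step. Closed neighborhoods only give the non-strict bound $d(V,V')\le 2\lambda$, and the definition of $r$-disjoint uses $\ge r$. This is why $r$ must be strictly larger than $2\lambda$, and why I chose $r=2\lambda+1$. I would also note that the same $V$ may arise from different indices $i$, or that two thickenings may coincide as sets. Either situation only lowers the count, so the multiplicity bound is unaffected.
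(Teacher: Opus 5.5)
Your proof is correct and takes essentially the same route as the paper, which thickens a $5\lambda$-disjoint cover coming from the control function by $2\lambda$, where you thicken a $(2\lambda+1)$-disjoint cover by $\lambda$. Your $g_{\mathcal{A}}(\lambda)=D(2\lambda+1)+2\lambda$ also correctly includes the diameter increase caused by the thickening, which the paper's stated $g_{\mathcal{A}}(x)=D_{\mathcal{A}}(5x)$ leaves out (it should read $D_{\mathcal{A}}(5x)+4x$).
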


\begin{proof} 
Let $D_{\mathcal{\mathcal{A}}}:\mathbb{R}_{+}\rightarrow \mathbb{R}_{+}$ be a $k$-dimensional control function for $\mathcal{A}$. We prove that the function $g_{\mathcal{A}}(x)=D_{\mathcal{A}}(5x)$ satisfies the conclusion of the statement. For a given $A\in \mathcal{A}$ let $\mathcal{V}$ be a $D_{\mathcal{A}}(5\lambda)$-bounded cover of $A$ that can be expressed as a union of $k+1$ families $\mathcal{V}_i$ , $i=0,1,\dots,k$ such that each $\mathcal{V}_i$ is $5\lambda$-disjoint. Let $\mathcal{W}$ denote the cover $\{N_{2\lambda}(V)|V\in \mathcal{V}\}$. We first show that $\mathcal{W}$ has multiplicity $\leq k+1$. For a fixed $i\in \{0,1,\dots,k\}$, any $x \in X$ is contained in $N_{2\lambda}(V)$ for at most one $V\in \mathcal{V}_i$, As the collection $\mathcal{V}_i$  is $5\lambda$-disjoint. It follows that any $x\in X$ is contained in at most $k+1$ elements of $\mathcal{W}$. Let $U\subseteq X$ be a subset of diameter $\leq \lambda$ and let $x\in U$. Let $V\in \mathcal{V}$ such that $x\in V$. Then, $U\subseteq B_{\lambda}(x)\subseteq N_{2\lambda}(V)$. Consequently, $\mathcal{L}(\mathcal{W})\geq \lambda$. 
\end{proof}
\subsection{Simplicial complexes, barycentric subdivisions}
Simplicial complexes can be defined abstractly as families of subsets of a given set closed under taking subsets or as topological spaces obtained by gluing simplices along their faces. We work with both points of view in this paper and switch between the two. For an introduction to abstract simplicial complexes we refer the reader to \cite{Kozlov}. 

\begin{definition}
A \emph{simplicial complex} $K$ consists of a vertex set $K^{(0)}$ together with a collection of finite subsets of $K^{(0)}$, called \emph{simplices}, satisfying the following condition:
\begin{center}
      If $\sigma \in K$ and $\tau \subseteq  \sigma$, then $\tau \in K$.
\end{center}

An $n$-simplex is a simplex containing exactly $(n+1)$ vertices.

\end{definition}
A subset $L$ of $K$ that is a simplicial complex in its own right is called a \emph{subcomplex} of $K$. A \textit{pair} $(K,L)$ of complexes consists of a simplicial complex $K$ and a subcomplex $L$.  
As is customary, we use square brackets to denote simplices, $\sigma=[U_0,\dots ,U_n]$.
\begin{definition}[Simplicial map]
Let $K$ and $L$ be simplicial complexes. A \emph{simplicial map} $f \colon K \to L$ is a map from the vertex set of $K$ to the vertex set of $L$ such that for every simplex $\sigma = [v_0,\dots,v_n]$ of $K$, 
$f(\sigma)=[f(v_0),\dots,f(v_n)]$
is a simplex of $L$.
\end{definition}
\begin{definition}[Contiguous maps]
Let $(K,A)$ and $(L,B)$ be pairs of simplicial complexes, and let
$f,g \colon (K,A) \to (L,B)$
be simplicial maps. We say that $f$ and $g$ are contiguous if, for every simplex $\sigma = [v_0,\dots,v_k]$ of $K$, the set of vertices
$\{ f(v_0),\dots,f(v_k), g(v_0),\dots,g(v_k) \}$
spans a simplex $\sigma'$ in $L$, and moreover, if $\sigma$ lies in $A$, then $\sigma'$ lies in $B$.
\end{definition}
We will need the following standard result which says that contiguous simplicial maps induce the same homomorphism on relative cohomology.

\begin{lemma}\cite[Theorem 12.6]{Munkres84}\label{simplicialhomotopy}
    Let $f,g:(K,A)\rightarrow (L,B)$ be contiguous maps. Then, the maps $f^{\ast}, g^{\ast}:H^{i}(L,B)\rightarrow H^{i}(K,A)$ induced by $f$ and $g$ on the relative cohomology groups are equal for all $i$. 
\end{lemma}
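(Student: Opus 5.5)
This is a standard result (Munkres, Theorem 12.6), so the plan is the classical argument: first show that contiguous maps induce chain-homotopic maps on relative chain complexes, then dualize to cochains. There are two routes; I would use acyclic carriers, which handles the relative condition cleanly.

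\textbf{Carrier.} For each simplex $\sigma=[v_0,\dots,v_k]$ of $K$, let $\Phi(\sigma)$ be the full subcomplex (the simplex together with all its faces) spanned by $\{f(v_0),\dots,f(v_k),g(v_0),\dots,g(v_k)\}$. Contiguity says this vertex set spans a simplex $\sigma'$ of $L$, so $\Phi(\sigma)$ is a simplex and is acyclic in reduced homology (over $\mathbb{F}_2$, or any coefficients). If $\tau\subseteq\sigma$ then $\Phi(\tau)\subseteq\Phi(\sigma)$. Moreover, if $\sigma\in A$ then $\sigma'\in B$, so $\Phi(\sigma)\subseteq B$. Both chain maps $f_\#$ and $g_\#$ are carried by $\Phi$, since $f(\sigma)$ and $g(\sigma)$ are faces of $\sigma'$.

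\textbf{Chain homotopy.} By the acyclic carrier theorem (built inductively on dimension), construct $D:C_k(K)\to C_{k+1}(L)$ with
\[
\partial D+D\partial=g_\#-f_\#
\]
and $D\sigma$ supported in $\Phi(\sigma)$. In degree $0$, $g(v)-f(v)$ is a reduced $0$-cycle in the simplex $\Phi(v)$, so it bounds there. Inductively, $c=g_\#\sigma-f_\#\sigma-D\partial\sigma$ is a cycle carried by $\Phi(\sigma)$ (check with $\partial$ and the induction hypothesis), so by acyclicity we can choose $D\sigma$ in $\Phi(\sigma)$ with $\partial D\sigma=c$. Because $\Phi(\sigma)\subseteq B$ whenever $\sigma\in A$, $D$ maps $C_*(A)$ into $C_{*+1}(B)$. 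Hence $D$ descends to a chain homotopy on the quotient complexes $C_*(K,A)\to C_{*+1}(L,B)$.

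\textbf{Dualize.} Apply $\operatorname{Hom}(-,\mathbb{F}_2)$. The dual $D^*$ is a cochain homotopy between $f^\#$ and $g^\#$ on $C^*(L,B)\to C^*(K,A)$, since $\delta D^*+D^*\delta=g^\#-f^\#$. Therefore $f^*=g^*$ on $H^i(L,B)\to H^i(K,A)$ for all $i$.

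\textbf{Main obstacle.} The only delicate point is keeping the chain homotopy relative, that is, ensuring $D(C_*(A))\subseteq C_*(B)$. This is exactly what the relative clause in the definition of contiguity provides through $\Phi(\sigma)\subseteq B$. One must also take care in degree $0$ to use the augmentation: $g(v)-f(v)$ has augmentation zero, which is what makes it a boundary. If one prefers a topological argument, an alternative is the straight-line homotopy $tf+(1-t)g$ on geometric realizations. It is well defined because $f(x)$ and $g(x)$ lie in the common simplex $\sigma'$, and it maps $|A|$ into $|B|$. Together with the isomorphism between simplicial and singular cohomology, this gives the same conclusion.
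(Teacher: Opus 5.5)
Your proof is correct. It is the standard acyclic-carrier argument behind the result the paper cites from Munkres; the paper itself gives no proof, only the reference. The simplex spanned by $f(\sigma)\cup g(\sigma)$ carries both chain maps, the relative clause of contiguity ensures the chain homotopy sends $C_*(A)$ into $C_{*+1}(B)$, and dualizing gives $f^{\ast}=g^{\ast}$ on $H^{i}(L,B)\to H^{i}(K,A)$.
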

\begin{definition}
   A geometric simplicial complex $K$ is a finite collection of geometric simplices  such
that 
\begin{enumerate}
    \item for any simplex $\sigma \in K$ if $\tau$ is a face of $\sigma$ then $\tau \in K$
    \item for any two simplices $\sigma, \sigma' \in K$ if $\sigma\cap\sigma'\neq \emptyset$ then $\sigma\cap\sigma'$ is a common face of both $\sigma$ and $\sigma'$.
\end{enumerate}
\end{definition}
Given an abstract simplicial complex there is a natural way of associating a geometric simplicial complex to it: 
\begin{definition}[Geometric realization]
    Let $K$ be an abstract simplicial complex with vertex set $V$. The geometric realization $|K|$ is a topological space obtained by replacing each abstract simplex of $K$ with a geometric simplex and gluing them together according to the face relations in $K$.

The construction proceeds as follows.

\begin{enumerate}
    \item For each vertex $v \in V$, let $e_v$ denote the corresponding standard basis vector in the vector space
    $\mathbb{R}^{(V)}$. 

    \item For every simplex $\sigma = \{v_0,\dots,v_n\} \in X$, define the geometric simplex associated to $\sigma$ by
    $
    |\sigma|
    =
    \left\{
        \sum_{i=0}^n t_i e_{v_i}
        \;\middle|\;
        t_i \ge 0,\;
        \sum_{i=0}^n t_i = 1
    \right\}.
    $
    This is the convex hull of the vertices $e_{v_0},\dots,e_{v_n}$.

    \item Define the geometric realization of $X$ as the union of all such simplices:
    $
    |K|
    =
    \bigcup_{\sigma \in K} |\sigma|.
    $

    \item Equip $|K|$ with the subspace topology inherited from $\mathbb{R}^{(V)}$.
\end{enumerate}

Equivalently, one may describe $|K|$ as the set
$$
|K|
=
\left\{
(t_v)_{v \in V}
\in \mathbb{R}^{(V)}
\;\middle|\;
t_v \ge 0,\;
\sum_{v \in V} t_v = 1,\;
\{v \in V : t_v \neq 0\} \in K
\right\}.
$$
\end{definition}
The coefficients $t_v$ are called the barycentric coordinates of the point $(t_v)_{v\in V}$.
\begin{definition}[Star]
   Let $K$ be a simplicial complex and let $L\subseteq K$ be a subcomplex. The star $\operatorname{St}_K(L)$ denoted by $\operatorname{St}_K(L)$, is defined by \\
   $$
\operatorname{St}_K(L)
=
\bigcup \left\{ \sigma
\;\middle|\;
\sigma \in K,\ \sigma \cap L \neq \varnothing
\right\}.
$$
   When the ambient complex is clear from the context we drop the subscript and refer the star of a subcomplex simply by $\operatorname{St}(\cdot)$. 
\end{definition}

\begin{definition}[Open Star]
Let $K$ be a simplicial complex and let $L \subseteq  K$ be a subcomplex. The open star of $L$ in $K$, denoted by $\operatorname{st}_K(L)$, is defined by
$$
\operatorname{st}_K(L)
=
\bigcup \left\{ \operatorname{int}(\sigma)
\;\middle|\;
\sigma \in K,\ \sigma \cap L \neq \varnothing
\right\},
$$
where $\operatorname{int}(\sigma)$ denotes the relative interior of the simplex $\sigma$.

Equivalently, $\operatorname{st}_K(L)$ is the union of the relative interiors of all simplices of $K$ having a face in $L$.\\
When the ambient complex is clear from the context we drop the subscript and refer the open star simply by $\operatorname{st}(\cdot)$. 
\end{definition}

A collection of $(k+1)$ vectors $v_0,v_1,\dots ,v_k$ in a vector space  is said to be in \textit{general position} if the set $\{v_i-v_0\}_{i=1}^{k}$ is linearly independent. Given a collection of $(k+1)$ vectors $v_0,v_1,\dots ,v_k$ in general position the (geometric) $k$-simplex spanned by them is  the set $$[v_0,v_1,\dots ,v_k]=\left\{\sum_{i=0}^{k} t_i v_i
\;\middle|\;
t_i \ge 0,\;
\sum_{i=0}^{k} t_i = 1
\right\}. $$

 Given a simplex $\sigma=[v_0,v_1,\dots ,v_k]\subseteq \mathbb{R}^N$ the \textit{barycenter} $b_\sigma$ is defined to be the point $ \frac{1}{k+1}(v_0+v_1+\dots+v_k)$. 
\begin{definition}[geometric barycentric subdivision]
   Let $X\subseteq \mathbb{R}^N$ be a geometric simplicial complex. The barycentric subdivision of $X$ is defined to be the flag simplicial complex such that: 
   \begin{enumerate}
       \item The $0$-skeleton of $Bd(X)$ is the set obtained by taking barycenters of simplices of $X$: 
       $$Bd(X)^{(0)}=\{b_{\sigma}|\sigma \in X \}$$
       \item The $1$-skeleton is given by,  $$Bd(X)^{(1)}=\{[b_{\sigma}, b_{\sigma'}]|\textrm{ either }\sigma \subseteq \sigma' \textrm{ or } \sigma'\subseteq \sigma\}$$   
   \end{enumerate}
\end{definition}

The following lemma provides an upper bound on the diameter of the simplices in a the barycentric subdivision of a given geometric simplex:

\begin{lemma}\cite[pg. 120]{hatcher}\label{lemma:bddiameter} Let $x_0,\dots,x_d$ be a collection of $(d+1)$ vectors in $\mathbb{R}^d$. 
    The diameter of each simplex of the barycentric subdivision of the simplex  $[x_0,x_1,\dots, x_d]$ is at most $\frac{d}{d+1}\max\{d(x_i,x_j)\}_{0\leq i,j\leq d}$. 
\end{lemma}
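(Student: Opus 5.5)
The plan is the standard argument from \cite[pg. 120]{hatcher}. It uses only convexity of the norm, so it works equally for the $\ell_\infty$ metric we use and for the Euclidean one. Let $\sigma=[x_0,\dots,x_d]$, where the $x_i$ are assumed in general position so that $\sigma$ is a genuine simplex, and put $D=\max_{i,j} d(x_i,x_j)$.

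\textbf{Step 1: reduce to vertices.} First show that for any geometric simplex $\tau=[w_0,\dots,w_m]$ and any point $p$, the function $q\mapsto d(p,q)=\|p-q\|$ restricted to $\tau$ attains its maximum at a vertex. Indeed, if $q=\sum t_j w_j$ with $t_j\ge 0$ and $\sum t_j=1$, then
$$\|p-q\|=\Bigl\|\sum t_j(p-w_j)\Bigr\|\le \sum t_j\|p-w_j\|\le \max_j\|p-w_j\|.$$
Applying this twice shows that $\operatorname{diam}(\tau)$ equals the maximum distance between two of its vertices. Taking $\tau=\sigma$ gives $\operatorname{diam}(\sigma)= D$.

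\textbf{Step 2: describe the small simplices.} A simplex of the barycentric subdivision has vertices $b_{\tau_0},\dots,b_{\tau_k}$ for a chain of faces $\tau_0\subsetneq\tau_1\subsetneq\dots\subsetneq\tau_k\subseteq\sigma$. By Step 1 it suffices to bound $\|b_\tau-b_{\tau'}\|$ for faces $\tau\subsetneq\tau'$.

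\textbf{Step 3: bound the vertex distances.} Since $b_\tau\in\tau\subseteq\tau'$, Step 1 applied with $p=b_{\tau'}$ gives $\|b_\tau-b_{\tau'}\|\le \max_{v}\|v-b_{\tau'}\|$, where $v$ ranges over the vertices of $\tau'$. Write $\tau'=[w_0,\dots,w_m]$ with $m\le d$, and let $v=w_0$. Then
$$\|w_0-b_{\tau'}\|=\Bigl\|\tfrac{1}{m+1}\sum_{j=0}^{m}(w_0-w_j)\Bigr\|\le \tfrac{1}{m+1}\sum_{j=1}^{m}\|w_0-w_j\|\le \tfrac{m}{m+1}D\le \tfrac{d}{d+1}D.$$
Here the term $j=0$ vanishes, which is where the factor $m/(m+1)$ comes from. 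The last inequality holds because $t\mapsto t/(t+1)$ is increasing.

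Combining Steps 1–3, every simplex of the subdivision has diameter at most $\frac{d}{d+1}D$. None of the steps is a real obstacle. The only points needing care are that the convexity argument in Step 1 uses nothing but the triangle inequality and homogeneity of the norm, hence is valid for $\ell_\infty$, and that the dropped $j=0$ term in Step 3 is what produces the contraction factor.
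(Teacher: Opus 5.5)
Your proof is correct and is essentially the argument the paper relies on: the paper gives no proof of its own but cites Hatcher (pg.~120). Hatcher uses the same two ingredients, namely convexity to reduce diameters to vertex distances and the barycenter-to-vertex bound $\frac{m}{m+1}D$; he organizes them as an induction on dimension, whereas you bound each nested pair $b_\tau, b_{\tau'}$ directly. Your remark that only the triangle inequality and homogeneity of the norm are used is worth keeping, since the paper applies the lemma in the $\ell_\infty$ metric while Hatcher states it for the Euclidean one.
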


\subsection{Nerves, refinement maps}
In this subsection, we recall the construction of nerves associated with covers of metric spaces and introduce refinement maps between them.
\begin{definition}[Nerve of a cover]
 Let $X$ be a metric space and let  $\mathcal{U}$ be a cover of $X$ by uniformly bounded sets. The nerve of $\mathcal{U}$ ,denoted by $N(\mathcal{U})$, is the simplicial complex defined as follows:
 \begin{enumerate}
     \item The zero-skeleton $N(\mathcal{U})^{(0)}$ is $\mathcal{U}$. 
      \item An $(n+1)$-tuple $[U_{0},U_{1}, \dots,U_{n}]$ spans an $n$-simplex if and only if the intersection $U_{0}\cap  U_{1}\cap\dots\cap U_{n}$ is non-empty. 
 \end{enumerate}
\end{definition}

\begin{definition}[Refinement map]
    Let $\mathcal{U}$ and $\mathcal{V}$ be two covers of $X$. We say that $ \mathcal{U}$ is a refinement of $\mathcal{V}$ if for every $ U\in \mathcal{U}$ there exists an element $V\in \mathcal{V}$ such that $U\subseteq V$. We denote this by $\mathcal{U}\preccurlyeq \mathcal{V}$. 
\end{definition}

Let $\mathcal{U}$,$\mathcal{V}$ be two covers of $X$ such that $\mathcal{U}\preccurlyeq \mathcal{V}$. A \textit{refinement map} $p_{\mathcal{U}}^{\mathcal{V}}$ is a map that $p_{\mathcal{U}}^{\mathcal{V}}:\mathcal{U}\rightarrow \mathcal{V}$ that sends a given $U\in \mathcal{U}$ to an element $V\in \mathcal{V}$ such that $U\subseteq V$. Any refinement map extends to a simplicial map $N(\mathcal{U})\rightarrow N(\mathcal{V})$. Slightly abusing notation we denote this extension to the corresponding nerves again by $p_{\mathcal{U}}^{\mathcal{V}}$. 

    \begin{lemma}\label{relativehomotopy}
   Let $X$ be a metric space and  let $\mathcal{U}$ and $\mathcal{V}$ be covers of $X$ such that $\mathcal{U}\preccurlyeq \mathcal{V}$. Let $P$ and $Q$ be subcomplexes of $N(\mathcal{U})$ and $N(\mathcal{V})$ such that any refinement map $p_{\mathcal{U}}^{\mathcal{V}}$ maps $P$ into $Q$. Assume further that for any two refinement maps $p_{\mathcal{U}}^{\mathcal{V}},q_{\mathcal{U}}^{\mathcal{V}}$ and for any simplex $ \sigma=[U_0,U_1,\dots,U_n]\in P$ the simplex spanned by $\{p_{\mathcal{U}}^{\mathcal{V}}(U_i)\}_{i=0}^{n}\cup\{q_{\mathcal{U}}^{\mathcal{V}}(U_i)\}_{i=0}^{n}$ lies in $Q$. Then, any two refinement maps $p_{\mathcal{U}}^{\mathcal{V}},q_{\mathcal{U}}^{\mathcal{V}}:(N(\mathcal{U}),P) \rightarrow (N(\mathcal{V}),Q)$  are contiguous.
\end{lemma}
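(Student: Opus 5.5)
This lemma follows directly from the definition of contiguity; no topology is needed. Fix two refinement maps $p=p_{\mathcal{U}}^{\mathcal{V}}$ and $q=q_{\mathcal{U}}^{\mathcal{V}}$. I will verify the two conditions in the definition of contiguous maps of pairs $(N(\mathcal{U}),P)\to(N(\mathcal{V}),Q)$.

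First, $p$ and $q$ are simplicial maps of pairs. Each is a simplicial map $N(\mathcal{U})\to N(\mathcal{V})$, as noted just before the statement. By hypothesis, every refinement map sends $P$ into $Q$.

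Second, take any simplex $\sigma=[U_0,\dots,U_n]$ of $N(\mathcal{U})$. I must show that the vertex set $\{p(U_i)\}_{i=0}^n\cup\{q(U_i)\}_{i=0}^n$ spans a simplex of $N(\mathcal{V})$. Since $\sigma$ is a simplex of the nerve, there is a point
\[
x\in U_0\cap\dots\cap U_n .
\]
Every $U_i$ satisfies $U_i\subseteq p(U_i)$ and $U_i\subseteq q(U_i)$, so $x$ lies in every $p(U_i)$ and every $q(U_i)$. Hence the intersection of these finitely many elements of $\mathcal{V}$ is nonempty. By the definition of the nerve they span a simplex $\sigma'$ of $N(\mathcal{V})$. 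This holds even if some of the vertices coincide, because a simplex is a set of vertices.

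Finally, suppose $\sigma\in P$. Then the hypothesis of the lemma says directly that the simplex spanned by $\{p(U_i)\}\cup\{q(U_i)\}$ lies in $Q$, which is the relative condition. Therefore $p$ and $q$ are contiguous as maps of pairs.

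There is no real obstacle here. The only point needing care is the common-point argument, which is what makes $\sigma'$ a genuine simplex of $N(\mathcal{V})$ even when $\sigma\notin P$.
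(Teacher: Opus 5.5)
Your proof is correct and follows the paper's argument: a point of $U_0\cap\dots\cap U_n$ lies in every $p(U_i)$ and every $q(U_i)$, so these vertices span a simplex of $N(\mathcal{V})$, and the relative condition is exactly the lemma's hypothesis. You also check explicitly that $p$ and $q$ are maps of pairs, and you phrase the nonempty-intersection step via a single common point; both make explicit what the paper leaves implicit.
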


\begin{remark}
We allow the possibility that $P$ and $Q$ are empty in the above statement. 
\end{remark}

\begin{proof}
Let $ \sigma=[U_0,U_1,\dots ,U_k]$ be a $k$-simplex, then $\cap_{i=1}^{k} U_i \neq \emptyset$. Let $p_{\mathcal{U}}^{\mathcal{V}}(U_i)=V_i$ and $q_{\mathcal{U}}^{\mathcal{V}}(U_i)=W_i$. We first show that the set  $\{V_i\}_{i=0}^{k}\cup \{W_i\}_{i=0}^{k}$ spans a simplex $\sigma'$. By definition, $U_i\subseteq V_i$ and $U_i\subseteq W_i$, for all $0\leq i\leq k$. As a result, $\cap_{i=0}^{k} U_i \subseteq \cap_{i=0}^{k} V_i$ as well as $\cap_{i=0}^{k} U_i \subseteq \cap_{i=0}^{k} W_i$. It follows that $ (\cap_{i=0}^{k} V_i) \cap (\cap_{i=0}^{k}W_i) \neq \emptyset$ , and hence the set $\{V_i\}_{i=0}^{k}\cup \{W_i\}_{i=0}^{k}$ spans a simplex. Now, if $\sigma$ is a simplex in $P$ then, by assumption, the simplex spanned by $\{V_i\}_{i=0}^{k}\cup \{W_i\}_{i=0}^{k}$ is contained in $Q$. It follows that $\sigma'\in Q$.  
\end{proof}

\begin{lemma}\label{lemmahomology}
Let $\mathcal{A}$ be a family of metric spaces such that $\operatorname{asdim}(\mathcal{A}) \le k$. Then there exists a function $g \colon \mathbb{R}_{+} \to \mathbb{R}_{+}$ such that for any $A \in \mathcal{A}$ and any covers $\mathcal{U}, \mathcal{V}$ of $A$ with $\operatorname{mesh}(\mathcal{U}) \le \lambda$ and $\mathcal{L}(\mathcal{V}) \ge g(\lambda)$, the following conditions are satisfied:

\begin{enumerate}
    \item There exists a cover $\mathcal{W}$ of $A$ with multiplicity at most $(k+1)$ such that
    \[
        \mathcal{U} \preccurlyeq \mathcal{W} \preccurlyeq \mathcal{V}.
    \]

    \item Any refinement map
    \[
        p_{\mathcal{U}}^{\mathcal{V}} \colon N(\mathcal{U}) \longrightarrow N(\mathcal{V})
    \]
    factors, up to contiguity, through the $k$-dimensional complex $N(\mathcal{W})$:
    \[
    \begin{tikzcd}
        & N(\mathcal{W}) \arrow{dr}{p_{\mathcal{W}}^{\mathcal{V}}} \\
        N(\mathcal{U}) \arrow{ur}{p_{\mathcal{U}}^{\mathcal{W}}} \arrow{rr}{p_{\mathcal{U}}^{\mathcal{V}}} && N(\mathcal{V}).
    \end{tikzcd}
    \]

    \item Let $P$ and $Q$ be simplicial complexes such that $P$ is a subcomplex of $N(\mathcal{U})$, $Q$ is a subcomplex of $N(\mathcal{V})$, and $p_{\mathcal{U}}^{\mathcal{V}}(P) \subseteq Q$ for any refinement map $p_{\mathcal{U}}^{\mathcal{V}}$. Assume further that for any two refinement maps $p_{\mathcal{U}}^{\mathcal{V}},q_{\mathcal{U}}^{\mathcal{V}}$ and for any simplex $ \sigma=[U_0,U_1,\dots,U_n]\in P$ the simplex spanned by $\{p_{\mathcal{U}}^{\mathcal{V}}(U_i)\}_{i=0}^{n}\cup\{q_{\mathcal{U}}^{\mathcal{V}}(U_i)\}_{i=0}^{n}$ lies in $Q$. Then the map
    \[
        (p_{\mathcal{U}}^{\mathcal{V}})^{*} \colon H^{i}(N(\mathcal{V}), Q) \longrightarrow H^{i}(N(\mathcal{U}), P) ,
    \]
    induced by any refinement map $p_{\mathcal{U}}^{\mathcal{V}}$, is trivial for all $i > k$.
\end{enumerate}
\end{lemma}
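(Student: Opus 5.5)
We will take $g(\lambda) = g_{\mathcal{A}}(\lambda) + \lambda$, where $g_{\mathcal{A}}$ is the function from Lemma~\ref{Lebesguenumber}, and prove the three parts in order.

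\textbf{Part (1).} Fix $A\in\mathcal{A}$ and covers $\mathcal{U},\mathcal{V}$ with $\operatorname{mesh}(\mathcal{U})\le\lambda$ and $\mathcal{L}(\mathcal{V})\ge g(\lambda)$. By Lemma~\ref{Lebesguenumber} there is a cover $\mathcal{W}$ of $A$ with the following properties:
\begin{itemize}
\item it is $g_{\mathcal{A}}(\lambda)$-bounded;
\item its Lebesgue number is at least $\lambda$;
\item its multiplicity is at most $k+1$.
\end{itemize}
Every $U\in\mathcal{U}$ has diameter $\le\lambda\le\mathcal{L}(\mathcal{W})$, so $U$ lies in some $W\in\mathcal{W}$; hence $\mathcal{U}\preccurlyeq\mathcal{W}$. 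Every $W\in\mathcal{W}$ has diameter $\le g_{\mathcal{A}}(\lambda)\le\mathcal{L}(\mathcal{V})$, so $W$ lies in some $V\in\mathcal{V}$; hence $\mathcal{W}\preccurlyeq\mathcal{V}$. (Strictly, the Lebesgue number is a supremum, so we add a small slack $+1$ to $g$ to avoid boundary issues.) Since the multiplicity is at most $k+1$, every simplex of $N(\mathcal{W})$ has at most $k+1$ vertices, so $N(\mathcal{W})$ has dimension at most $k$.

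\textbf{Part (2).} Choose refinement maps $p_{\mathcal{U}}^{\mathcal{W}}$ and $p_{\mathcal{W}}^{\mathcal{V}}$. Their composite sends each $U$ to some $V\supseteq U$, so it is itself a refinement map $\mathcal{U}\to\mathcal{V}$. By Lemma~\ref{relativehomotopy}, applied with $P=Q=\emptyset$, any two refinement maps $N(\mathcal{U})\to N(\mathcal{V})$ are contiguous. Therefore $p_{\mathcal{U}}^{\mathcal{V}}$ is contiguous to $p_{\mathcal{W}}^{\mathcal{V}}\circ p_{\mathcal{U}}^{\mathcal{W}}$, which is the required factorization.

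\textbf{Part (3).} This is the delicate step, because the factorization must be compatible with the pairs. Let $R$ be the subcomplex of $N(\mathcal{W})$ spanned by images of simplices of $P$, that is, $R=p_{\mathcal{U}}^{\mathcal{W}}(P)$ together with its faces. We need $p_{\mathcal{W}}^{\mathcal{V}}(R)\subseteq Q$. This holds because $p_{\mathcal{W}}^{\mathcal{V}}\circ p_{\mathcal{U}}^{\mathcal{W}}$ is a refinement map $\mathcal{U}\to\mathcal{V}$, and by hypothesis every refinement map sends $P$ into $Q$. So the composite of simplicial maps of pairs
\[
(N(\mathcal{U}),P)\to(N(\mathcal{W}),R)\to(N(\mathcal{V}),Q)
\]
is well defined. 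The hypothesis on $P$ and $Q$ is exactly what Lemma~\ref{relativehomotopy} requires, so $p_{\mathcal{U}}^{\mathcal{V}}$ and this composite are contiguous as maps of pairs. By Lemma~\ref{simplicialhomotopy}, $(p_{\mathcal{U}}^{\mathcal{V}})^*$ therefore factors through $H^i(N(\mathcal{W}),R)$. This group vanishes for $i>k$, since $N(\mathcal{W})$ has no simplices of dimension greater than $k$ and so its relative simplicial cochain groups are zero in those degrees. Hence $(p_{\mathcal{U}}^{\mathcal{V}})^*$ is trivial for $i>k$.

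The main obstacle is Part (3): checking that the intermediate subcomplex $R$ is chosen so that both maps are maps of pairs. Taking $R$ to be the image of $P$ resolves this, with the relative contiguity supplied by the hypothesis.
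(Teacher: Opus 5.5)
Your proposal is correct and follows the paper's proof. You obtain $\mathcal{W}$ from Lemma~\ref{Lebesguenumber}, note that $p_{\mathcal{W}}^{\mathcal{V}}\circ p_{\mathcal{U}}^{\mathcal{W}}$ is itself a refinement map and hence contiguous to $p_{\mathcal{U}}^{\mathcal{V}}$ as maps of pairs by Lemma~\ref{relativehomotopy}, and factor $(p_{\mathcal{U}}^{\mathcal{V}})^{*}$ through a relative cohomology group of the at most $k$-dimensional complex $N(\mathcal{W})$, which vanishes above degree $k$. The differences are cosmetic: you take the smallest admissible intermediate subcomplex $R=p_{\mathcal{U}}^{\mathcal{W}}(P)$ where the paper takes the largest one, $(p_{\mathcal{W}}^{\mathcal{V}})^{-1}(Q)$, and you add slack to $g$ because the Lebesgue number is a supremum (the same boundary issue arises for $\mathcal{U}\preccurlyeq\mathcal{W}$, but it is harmless since the cover built in Lemma~\ref{Lebesguenumber} does contain every set of diameter at most $\lambda$).
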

\begin{proof}
Let $g$ be a function satisfying the conclusion of Lemma~\ref{Lebesguenumber}. Then there exists a cover $\mathcal{W}$ of multiplicity at most $(k+1)$ such that
$\operatorname{diam}(\mathcal{W}) \le g(\lambda)
\quad\text{and}\quad
\mathcal{L}(\mathcal{W}) \ge \lambda .$
Consequently, for any cover $\mathcal{U}$ with $\operatorname{diam}(\mathcal{U}) \le \lambda$ we have $\mathcal{U} \preccurlyeq \mathcal{W}$.
Similarly, for any cover $\mathcal{V}$ with $\mathcal{L}(\mathcal{V}) \ge g(\lambda)$ we have $\mathcal{W} \preccurlyeq \mathcal{V}$.
This proves~(1).

\medskip

Assertions~(2) and~(3) are consequences of~(1).
Consider three covers $\mathcal{U}, \mathcal{W}, \mathcal{V}$ such that
$\mathcal{U} \preccurlyeq \mathcal{W} \preccurlyeq \mathcal{V}.$ Fix refinement maps
$p_{\mathcal{U}}^{\mathcal{V}},p_{\mathcal{U}}^{\mathcal{W}},p_{\mathcal{W}}^{\mathcal{V}}$ between them. Observe that both $p_{\mathcal{W}}^{\mathcal{V}} \circ p_{\mathcal{U}}^{\mathcal{W}}\quad\text{ and }p_{\mathcal{U}}^{\mathcal{V}}$
are refinement maps from $\mathcal{U}$ to $\mathcal{V}$; hence the induced maps are contiguous.

\medskip

Since $N(\mathcal{W})$ is at most $k$-dimensional, for any $i>k$ and any subcomplex $B \subseteq N(\mathcal{W})$ the cohomology group
$H^{i}(N(\mathcal{W}), B)$ is trivial.

Let $P \subseteq N(\mathcal{U})$ and $Q \subseteq N(\mathcal{V})$ be subcomplexes such that
$p_{\mathcal{U}}^{\mathcal{V}}(P) \subseteq Q.$
Let $B := (p_{\mathcal{W}}^{\mathcal{V}})^{-1}(Q)$.
We obtain the following commutative diagram of pairs:
\[
\begin{tikzcd}
    & (N(\mathcal{W}), B) \arrow{dr}{p_{\mathcal{W}}^{\mathcal{V}}} \\
    (N(\mathcal{U}), P) \arrow{ur}{p_{\mathcal{U}}^{\mathcal{W}}}
    \arrow{rr}{p_{\mathcal{W}}^{\mathcal{V}} \circ p_{\mathcal{U}}^{\mathcal{W}}}
    && (N(\mathcal{V}), Q).
\end{tikzcd}
\]

This induces the following commutative diagram in relative cohomology:
\[
\begin{tikzcd}
    & H^{i}(N(\mathcal{W}), B) \cong 0 \arrow{dr}{(p_{\mathcal{U}}^{\mathcal{W}})^{*}} \\
    H^{i}(N(\mathcal{V}), Q)
    \arrow{ur}{(p_{\mathcal{W}}^{\mathcal{V}})^{*}}
    \arrow{rr}{(p_{\mathcal{W}}^{\mathcal{V}} \circ p_{\mathcal{U}}^{\mathcal{W}})^{*}}
    && H^{i}(N(\mathcal{U}), P).
\end{tikzcd}
\]

Consequently,
$(p_{\mathcal{W}}^{\mathcal{V}} \circ p_{\mathcal{U}}^{\mathcal{W}})^{*} = 0.$
By Lemma~\ref{relativehomotopy},
$p_{\mathcal{W}}^{\mathcal{V}} \circ p_{\mathcal{U}}^{\mathcal{W}}$ is contiguous to $p_{\mathcal{U}}^{\mathcal{V}}$ relative to $P$.
Therefore,
\[
(p_{\mathcal{U}}^{\mathcal{V}})^{*}
=
(p_{\mathcal{W}}^{\mathcal{V}} \circ p_{\mathcal{U}}^{\mathcal{W}})^{*}
= 0,
\]
which proves~(3).
\end{proof}

\subsection{Alexander duality for manifolds with boundary}
We recall a version of the Alexander duality theorem that holds for manifolds with boundary.  For a triangulable space $M$ a subset $D\subseteq X$ is said to be a polyhedron if there exists a triangulation of $M$ with respect to which $D$ is a subcomplex. 
\begin{theorem}\cite[Theorem 72.3]{Munkres84}\label{poincarelefschetz}
Let $(M,\partial M)$ be a compact triangulable $d$-manifold with boundary. Then, there is a function $\phi$ that assigns to each $D$ polyhedron in $(M,\partial M)$ that contains $\partial M$, an isomorphism,
$$\phi_{D}:H^{k}(M,D)\rightarrow H_{d-k}(M\setminus D).$$
Moreover, this assignment is natural with respect of inclusions of polyhedra:
Let $E$ be a polyhedron containing $\partial M$ such that $D\subseteq E$ and let $i$ denote the inclusion map $i:D\hookrightarrow E$, then the following diagram commutes,
\begin{center}
\begin{tikzcd}
H^{k}(M,D)\arrow{r}{\phi_{D}}& H_{d-k}(M\setminus D)\\
H^{k}(M,E)\arrow{u}{i^\ast}\arrow{r}{\phi_{E}} & H_{d-k}(M\setminus E)\arrow{u}{i_\ast}\\
\end{tikzcd}
\end{center}

\end{theorem}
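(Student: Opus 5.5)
The plan is to derive the statement from classical Lefschetz (Poincaré) duality for a compact manifold with boundary, with $\mathbb{F}_2$ coefficients, using the dual cell structure of a triangulation. With $\mathbb{F}_2$ coefficients every compact triangulable manifold is orientable, so no orientation hypotheses arise.

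\textbf{Step 1: reduce to a regular neighbourhood.} Fix a triangulation $K$ of $M$ in which $D$ is a subcomplex, and pass to the first barycentric subdivision $K'$. Let $N=\operatorname{St}_{K'}(D)$ be the closed star of $D$ in $K'$, and let $C$ be the full subcomplex of $K'$ spanned by the vertices not in $D$. These are barycenters of simplices of $K$ not contained in $D$.

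The standard straight-line argument in barycentric coordinates gives two facts. First, $N$ deformation retracts onto $D$. Second, $M\setminus D$ deformation retracts onto $C$: push each point away from its $D$-coordinates. Since $D\supseteq\partial M$, the subcomplex $C$ lies in the interior of $M$. Its closed regular neighbourhood $\overline{M\setminus N}$ is a compact $d$-manifold $Q$ with boundary $\partial Q=N\cap Q$, and $Q\simeq C$.

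\textbf{Step 2: apply duality.} By homotopy invariance and excision we have
\[
H^k(M,D)\cong H^k(M,N)\cong H^k(Q,\partial Q).
\]
Lefschetz duality, given by the cap product with the $\mathbb{F}_2$-fundamental class $[Q,\partial Q]$, then gives
\[
H^k(Q,\partial Q)\cong H_{d-k}(Q)\cong H_{d-k}(C)\cong H_{d-k}(M\setminus D).
\]
Define $\phi_D$ to be this composite.

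Alternatively, and more in Munkres' spirit, one can avoid $Q$ entirely. The dual blocks $D(\sigma)$ for $\sigma\in K\setminus D$ form a block decomposition of $C$. The chain-level map sending the elementary cochain $\sigma^*$ to the dual block $D(\sigma)$ is an isomorphism of chain complexes
\[
C^k(K,D)\to C^{\mathrm{block}}_{d-k}(C).
\]
This uses that the link of each interior simplex is a homology sphere, which is where the manifold hypothesis enters. The boundary simplices lie in $D$ and are therefore discarded.

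\textbf{Step 3: naturality.} For $D\subseteq E$, choose a common triangulation in which both are subcomplexes; this is possible because both are polyhedra with respect to one triangulation. The dual-block chain map for $E$ is the restriction of the one for $D$: the cochains vanishing on $E$ are sent to blocks of simplices not in $E$, and these lie in $C_E\subseteq C_D$. Consequently the square commutes on the chain level, with $i^*$ on the left and the inclusion-induced map $i_*$ on the right.

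One must also check that $\phi_D$ does not depend on the chosen triangulation. Subdivision invariance of the dual-block isomorphism (compare via a common subdivision) handles this.

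\textbf{Main obstacle.} I expect the hardest part to be verifying that the dual-block chain map is an isomorphism of chain complexes. This requires that each block $D(\sigma)$ is a homology $(d-\dim\sigma)$-cell with the correct boundary, which is exactly the local homology-manifold condition at interior simplices. Once that is in place, naturality is essentially formal.
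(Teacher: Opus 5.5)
Your Step~1 fails as stated. The closed star $N=\operatorname{St}_{K'}(D')$ in the \emph{first} barycentric subdivision need not be a regular neighbourhood of $D$ when $D$ is not a full subcomplex of $K$. Subcomplexes containing $\partial M$ are often not full.

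Take $M=K=\Delta^d$ and $D=\partial\Delta^d$. Every simplex of $K'$ except the vertex $\hat\Delta$ has a vertex in $D'$, so $N=M$. This $N$ is contractible and does not retract onto $D\cong S^{d-1}$. Already $H^d(M,D)\cong\mathbb{F}_2$ differs from $H^d(M,N)=H^d(M,M)=0$. Moreover $Q=\overline{M\setminus N}=\emptyset$ while $C=\{\hat\Delta\}$, so $Q\not\simeq C$. Hence your first definition of $\phi_D$ in Step~2 is not an isomorphism in general.

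There are two repairs:
\begin{itemize}
\item Take the closed star of $D''$ in the second subdivision $K''$. This is legitimate because $D'$ is full in $K'$.
\item Alternatively, as the paper does in Lemma~\ref{nicenbd}, cut each simplex of $\operatorname{St}_{K'}(D')$ at total weight $\tfrac{1}{2}$ on its $D'$-vertices.
\end{itemize}
Either way $N\simeq D$ and $Q\simeq C$. Also $Q$ has boundary $N\cap Q$: it is a PL manifold if the triangulation is combinatorial, and in general an $\mathbb{F}_2$-homology manifold, which suffices for Lefschetz duality. Your other claim, that $M\setminus D$ deformation retracts onto $C$, is correct.

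There is also a mismatch between Steps~2 and~3. Step~2 defines $\phi_D$ by capping with $[Q,\partial Q]$, but Step~3 proves naturality only for the dual-block chain isomorphism. You must either take the dual-block map as the definition, or invoke the standard fact that it agrees with cap product by the fundamental class.

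If you take the dual-block definition, your alternative in Step~2 together with Step~3 is a correct argument. It is essentially the classical dual-block proof as in Munkres. It never needs $Q$: it uses only that links of interior simplices are $\mathbb{F}_2$-homology spheres, plus the retraction $M\setminus D\simeq C$. Inclusion-naturality then becomes a literal restriction of chain maps.

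The paper's sketch instead goes through the neighbourhood $U$, excision, and $-\cap e_U$, and this choice matters. Lemma~\ref{mainlemma} needs $\phi$ to be compatible with the simplicial refinement map $f$, not merely with inclusions. The paper gets this from the cap-product description, via $f_\ast(e_U)=e_V$ and naturality of cap products. Your chain-level argument does not give that without the identification above.

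Finally, two of your claims hold only inside one PL structure (for example, subdivisions of a fixed triangulation, which is all the paper uses):
\begin{itemize}
\item that $D\subseteq E$ always admit a common triangulation;
\item that independence of the triangulation follows by comparing through a common subdivision.
\end{itemize}
For arbitrary triangulations of a topological manifold, common subdivisions need not exist.
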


Below we discuss some of the details of the proof of the above Theorem \ref{poincarelefschetz}. These will be used in the proof of Lemma \ref{mainlemma}. For a complete proof we refer the reader to \cite{Munkres84}. 

\vspace{5mm}
\textit{Proof Sketch:}
Let $(M,\partial M)$ be a manifold with boundary and let $D$ be a polyhedron such that $\partial M \subseteq D$. We start by choosing an open neighborhood $U$ of $D$ such that $U$ deformation retracts onto $D$ and $M\setminus D$ deformation retracts onto $X \setminus U$. We denote $M\setminus U$ by $M^{U}$.  
 
$$
H_{d}(M,\partial M) \xrightarrow{m_\ast} H_{d}(M,\overline{U}) \xrightarrow{(k_\ast)^{-1}} H_{d}(M_U,\partial U).
$$

Here $m$ is the inclusion map. $k:(M^U,\partial U) \rightarrow(M,\overline{U})$ also denotes the inclusion map. $k_\ast$ is an isomorphism by Excision, and hence invertible.
Let $e_{U}$ denote the fundamental class of the manifold with boundary $(M^U,\partial U)$. $e_U \in H_n(M^U,\partial U)$ is the image of the fundamental class $e \in H_{n}(M,\partial M)$ under the homomorphism $ (k_\ast)^{-1}\circ m_\ast$. 
We are now ready to give a description of $\phi_D$. Consider the following sequence of maps:
$$
H^{k}(M,D)
\xleftarrow{i^{\ast}} H^{k}(M,\overline{U})
\xrightarrow{p^{\ast}} H^{k}(M^{U},\partial U)
\xrightarrow{-\cap e_{U}} H_{d-k}(M^U)
\xrightarrow{l_{\ast}} H_{d-k}(M\setminus D).
$$
The maps $i^\ast$, $p^{\ast}$, and $l_\ast$ in the above diagram are the maps induced by the canonical inclusions $i:D \hookrightarrow \overline{U}$, $p:\partial U \hookrightarrow \overline{U}, \textrm{ and } l:M^U \hookrightarrow M\setminus D$ . The maps $i^{\ast}$ and $l_{\ast}$ are isomorphisms since $i$ and $l$ are homotopy equivalences. The map $p^{\ast}$ is an isomorphism by Excision. Note that $(M_U,\partial U)$ is a manifold with boundary  and the map $(-\cap e_{U})$ denotes the cap product with the fundamental class $e_{U}\in H^{n}(M^{U},\partial U)$. This is also an isomorphism by Poincar\'e duality.
Thus, the isomorphism $\phi_D$ is given by
$\phi_D = l_\ast \circ (-\cap e_U) \circ p^{\ast} \circ (i^{\ast})^{-1}.$
The map $\phi_D$ does not depend on the choice of the open set $U$. 
\qedsymbol

\vspace{5mm}
We also recall a naturality property satisfied by cap products. This will be used in the proof of Lemma \ref{mainlemma}. 
\begin{theorem}\cite[pg. 241]{hatcher}\label{naturality}
    Let $f:X\rightarrow Y$ be a continuous map between CW complexes $X$ and $Y$. Given indices $k,l$, let $\cap_X :  H^{l}(X)\times H_k(X)\rightarrow H_{k-l}(X)$ and $\cap_Y : H_k(Y)\times H^{l}(Y) \rightarrow H_{k-l}(Y)$ denote the respective cap products. Then, for all $\alpha \in H_{k}(X), \varphi \in H^{l}(Y)$, 
    $$ \varphi \cap_{Y} f_{\ast}(\alpha) = f_{\ast} ( f^{\ast}(\varphi)\cap_X \alpha).$$
\end{theorem}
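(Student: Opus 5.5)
The plan is to prove the identity at the level of singular chains and cochains and then pass to (co)homology. The CW hypothesis plays no role: the formula holds for arbitrary continuous maps of topological spaces. Because all coefficients are in $\mathbb{F}_2$, no signs need to be tracked.

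\emph{Step 1: recall the chain-level cap product.} For a singular $k$-simplex $\sigma:\Delta^k\to X$ and a singular $l$-cochain $\psi$ on $X$, set
$$\sigma\cap\psi=\psi\bigl(\sigma|_{[v_0,\dots,v_l]}\bigr)\,\sigma|_{[v_l,\dots,v_k]},$$
and extend linearly to all $k$-chains. Let $f_\sharp$ be the induced chain map ($\sigma\mapsto f\circ\sigma$) and $f^\sharp$ the induced cochain map ($f^\sharp\psi=\psi\circ f_\sharp$).

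\emph{Step 2: verify the identity on a single simplex.} Restricting to a face commutes with post-composition by $f$, that is, $(f\circ\sigma)|_{[v_0,\dots,v_l]}=f\circ(\sigma|_{[v_0,\dots,v_l]})$, and similarly for the back face. Hence
$$f_\sharp(\sigma)\cap\psi=\psi\bigl(f\circ\sigma|_{[v_0,\dots,v_l]}\bigr)\, f\circ\sigma|_{[v_l,\dots,v_k]}=f_\sharp\bigl(\sigma\cap f^\sharp\psi\bigr).$$
By linearity, $f_\sharp(a)\cap\psi=f_\sharp(a\cap f^\sharp\psi)$ for every chain $a$.

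\emph{Step 3: pass to homology.} This is the only point needing care, namely that the cap product descends to a well-defined pairing $H_k\times H^l\to H_{k-l}$. I will use the standard boundary formula
$$\partial(a\cap\psi)=\partial a\cap\psi+a\cap\delta\psi,$$
which is signless over $\mathbb{F}_2$ and is checked by expanding both sides simplex by simplex. It shows that a cycle capped with a cocycle is a cycle. It also shows that the class is unchanged when the cycle is replaced by a homologous one ($\partial b\cap\psi=\partial(b\cap\psi)$) or the cocycle by a cohomologous one ($a\cap\delta\eta=\partial(a\cap\eta)$ when $\partial a=0$). Since $f_\sharp$ and $f^\sharp$ carry cycles to cycles and cocycles to cocycles, take representatives $a$ of $\alpha$ and $\psi$ of $\varphi$ and apply Step 2. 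This gives
$$\varphi\cap_Y f_\ast(\alpha)=f_\ast\bigl(f^\ast(\varphi)\cap_X\alpha\bigr)$$
in $H_{k-l}(Y)$, which is the claim.

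The same computation works verbatim for relative versions, with $f$ a map of pairs and the relative cap products. That is the form in which the result will be combined with Theorem \ref{poincarelefschetz}. I expect no real obstacle: the only mildly tedious point is the boundary formula in Step 3, and it is standard.
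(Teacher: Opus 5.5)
Your proposal is correct and is essentially the standard argument at the cited page of Hatcher, which the paper invokes without giving a proof of its own. That argument is the chain-level identity $f_\sharp(\sigma)\cap\psi=f_\sharp(\sigma\cap f^\sharp\psi)$, with the boundary formula for cap products used to pass to (co)homology; your remark that the same computation handles maps of pairs and relative cap products is apt, since that relative form is what the paper actually uses in the third square of the diagram in the proof of Lemma \ref{mainlemma}.
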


\subsection{Graphs of groups and spaces}
In this section we recall the definition of a graph of groups and the fundamental group associated to a graph of groups. In section \ref{applications}, we give an application towards understanding coarse embeddings of spaces with property $QF_d$ into fundamental groups of certain graphs of groups.  
\\
A graph of groups $(\Gamma,\mathcal{A})$ consists of the following data:
\begin{itemize}
    \item [(1)] An oriented and connected graph $\Gamma$. For each edge $e$, we denote by $\ori e$ its initial vertex and by $\ter e$ its terminal one.
    \item [(2)] An assignment of a \emph{vertex-group} $A_v$ to every vertex $v$ of $\Gamma$, an assignment of an \emph{edge-group} $A_e$ to every edge $e$, and an injective homomorphism $i_0 : A_e \to A_v$ (resp.\ $i_1 : A_e \to A_v$) whenever $\ori e = v$ (resp.\ $\ter e = v$).
\end{itemize}

Given a graph of groups $\pi_1(\Gamma,\mathcal{A})$ there is a canonical way of associating a group to it  called its \emph{fundamental group}, dented by $\pi_1(\Gamma,\mathcal{A})$. We briefly recall the topological definition of $\pi_1(\Gamma,\mathcal{A})$ was given in \cite{scott1979topological}. To a graph of groups $(\Gamma,\mathcal{A})$ one associates a \emph{graph of spaces} $X$. For each vertex $v$ of $\Gamma$ (resp.\ each edge $e$) take a finite simplicial complex $X_v$ (resp.\ $X_e$) such that $\pi_1(X_v) = A_v$ (resp.\ $\pi_1(X_e) = A_e$). Let $I$ be the unit interval. Then $X$ is defined by gluing the complexes $X_v$ and $X_e \times I$, for any $v$ vertex and $e$ edge, as follows. If $e$ is an edge and $i_0 : A_e \to A_{\ori e}$ is the associated injective homomorphism, let $f_0 : X_e \to X_{\ori e}$ be a simplicial map inducing it; and identify, for each $x \in X_e$,  $(x,0) \in X_e \times \left\{0\right\}$ to $f_0(x) \in X_{\ori e}$. Similarly, we identify $X_e \times \left\{1\right\} $ to $X_{\ter e}$. The fundamental group $\pi_1(\Gamma,\mathcal{A})$ is then defined as the fundamental group of $X$, $\pi_1(X) = \pi_1(\Gamma,\mathcal{A})$
The universal cover $\Tilde{X}$ is a union of copies of $\Tilde{X_v}$ and $\Tilde{X_e}\times I$. By giving each edge length one, $\Tilde{X}$ is quasi-isometric to $\pi_1(\Gamma,\mathcal{A})$. Moreover, the Bass-Serre tree $T$ of $\pi_1(\Gamma,\mathcal{A})$ can be obtained from $\Tilde{X}$, see \cite[Section 4]{scott1979topological}, by identifying each copy of $\Tilde{X_v}$ in $\Tilde{X}$ to a vertex and each copy of $\Tilde{X_e}\times I$ to a copy of $I$. This map $p : \Tilde{X}\to T$ is $\pi_1(\Gamma,\mathcal{A})$-equivariant. If $t$ is a vertex of $T$ (resp.\ a midpoint of an edge of $T$), we call $p^{-1}(t)$ a \emph{vertex-space} (resp.\ an \emph{edge-space}). It is easy to see that removing an edge-space separates $\Tilde{X}$ into two coarsely connected components with arbitrarily large balls. Therefore, the family of edge-spaces coarsely separates $\Tilde{X}$.
A useful structural result of Bensaid-Tessera-Genevois provides a dichotomy for coarse embeddings into the universal cover of a graph of spaces:
\begin{theorem}\label{thm:coarsesepembedding}\cite[Theorem 4.1]{Bensaid2024CoarseSA}
Let $X$ be a graph of spaces associated to a graph of groups $(\Gamma,\mathcal{A})$, $Z$ a coarsely connected metric space, and $f : Z \to \Tilde{X}$ a coarse embedding. Either $f(Z)$ is coarsely separated by the family of edge-spaces of $\Tilde{X}$ or it is contained in a bounded neighborhood of some vertex-space. 
\end{theorem}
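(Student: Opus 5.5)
This statement is Theorem~4.1 of \cite{Bensaid2024CoarseSA}, and we only need it as a black box. Still, here is how I would prove it. The idea is to push everything down to the Bass--Serre tree $T$ via $p\colon \tilde X\to T$ and argue combinatorially on the edges of $T$.

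\textbf{Step 1: $f(Z)$ can only cross an edge-space near that edge-space.} Two facts about $p$ are needed.
\begin{enumerate}
\item $p$ is $1$-Lipschitz up to an additive constant.
\item For any edge $e$ of $T$ with edge-space $E_e=p^{-1}(m_e)$, the set $\tilde X\setminus E_e$ splits as $\tilde X_e^{+}\sqcup\tilde X_e^{-}$, the preimages of the two half-trees. Any path from one side to the other meets $E_e$.
\end{enumerate}
Since $Z$ is $c$-coarsely connected, $f(Z)$ is $\rho_+(c)$-coarsely connected. Hence any $\rho_+(c)$-chain in $f(Z)$ from $\tilde X_e^+$ to $\tilde X_e^-$ contains a point within $L:=\rho_+(c)$ (plus a fixed constant) of $E_e$. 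So removing $N_L(E_e)$ disconnects, coarsely, the part of $f(Z)$ in $\tilde X_e^+$ from the part in $\tilde X_e^-$. It remains to produce, for each $D$, an edge $e$ such that both sides contain points of $f(Z)$ farther than $D$ from $E_e$.

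\textbf{Step 2: define deep sides and orient edges.} Suppose that for some $D$ no such edge exists. Call a side of $e$ \emph{deep} if it contains a point of $f(Z)$ at distance $>D$ from $E_e$. By assumption every edge has at most one deep side.
\begin{itemize}
\item Orient each edge $e$ toward its deep side if it has one.
\item If $e$ has no deep side, then $f(Z)\subseteq N_D(E_e)\subseteq N_{D+1}(X_v)$ for an endpoint $v$ of $e$, and we are done.
\end{itemize}
So assume every edge has exactly one deep side. Two edges cannot point away from each other: the far side of each would lie inside the non-deep side of the other, and the deep points of one would then be within $D$ of the other's edge-space. Pushing this through the geometry of the tree gives a contradiction. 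Consequently, following the orientations from any vertex gives a consistent directed path.

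\textbf{Step 3 (main obstacle): the directed path terminates.} Suppose instead the directed path $e_1,e_2,\dots$ escapes to infinity in $T$. Fix $z\in f(Z)$ lying on the non-deep side of $e_n$ for all large $n$. Then $d(z,E_{e_n})\le D$ for all large $n$. But $d(z,E_{e_n})$ is bounded below by roughly $d_T(p(z),m_{e_n})$, which tends to infinity. This is a contradiction.

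So the orientation flow ends at a vertex $v$ all of whose incident edges point into $v$. Every point of $f(Z)$ outside the vertex-space $X_v$ then lies on the non-deep side of some edge $e$ at $v$. Hence it is within $D$ of $E_e\subseteq N_1(X_v)$, giving $f(Z)\subseteq N_{D+1}(X_v)$.

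I expect the delicate part to be the bookkeeping in Steps 2--3: making the ``no two edges point apart'' consistency argument rigorous and controlling constants uniformly. I would handle it by working entirely with tree distances via the coarse Lipschitz bound on $p$.
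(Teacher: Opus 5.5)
The paper does not prove this statement. It quotes it from \cite{Bensaid2024CoarseSA} and uses it as a black box in Corollary~\ref{embeddingcorollary}, so there is no internal proof to compare with. Your argument is a correct, self-contained proof. It is the standard fact that a coherent orientation of the edges of a tree has either a sink vertex or a sink end, with the sink end ruled out by the coarse Lipschitz bound on $p$. What your route buys over the citation is explicitness. The separating constant $L=\rho_+(c)+O(1)$ is uniform, and the separating sets are $N_L(E_e)\cap f(Z)$ for a suitable subfamily of edge-spaces. That is exactly the form in which the Corollary uses the result.

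A few points should be tightened.

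\textbf{The consistency step.} The step you flag as delicate takes one line, and it uses the separation property of edge-spaces rather than tree distances. Suppose the deep side of $e$ is the one not containing $e'$, and let $z$ be a deep point. Every path from $z$ to $E_{e'}$ meets $E_e$, because its projection must pass through $m_e$. Since $\tilde X$ is a length space, $d(z,E_{e'})\ge d(z,E_e)>D$. So the side of $e'$ containing $e$ is deep, i.e.\ $e'$ points toward $e$.

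\textbf{The directed path.} Applying this to two edges at a common vertex shows each vertex has at most one outgoing edge. The edge you arrive along points into the current vertex, so the directed path never backtracks. It is therefore a geodesic in $T$, and ``terminates or escapes to infinity'' is a genuine dichotomy.

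\textbf{The last step.} A point $z$ with $p(z)$ in the half-edge between $v$ and $m_e$ lies on the \emph{deep} side of $e$. So the sentence ``every point of $f(Z)$ outside $X_v$ lies on the non-deep side of some edge at $v$'' is false as stated. Such points are within a uniform distance of $X_v$ anyway, so $f(Z)\subseteq N_{D+C}(X_v)$ still follows.

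\textbf{Constants.} Take $D\ge L$ so that deep points lie outside $N_L(E_e)$. Also, depending on the metric placed on $\tilde X$, $p$ is in general only coarsely Lipschitz rather than $1$-Lipschitz up to an additive constant. That weaker bound is all Step~3 needs.
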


\section{Reduction to the finitary version}\label{QF}
In this section, we reduce the proof of  Theorem \ref{main} to Theorem \ref{mainresult}. Given a space $X$ satisfying property $\textrm{QF}_d$ and a family $\mathcal{B}$ that coarsely separates $X$, we produce a family $\mathcal{A}$ such that $\mathcal{A}$ quasi-isometrically embeds in $\mathcal{B}$ and satisfies the hypotheses of Theorem \ref{mainresult}.
Theorem \ref{mainresult} can be thought of as a finitary version of Theorem \ref{main}. In Section \ref{section:mainresult} we give a proof of Theorem \ref{mainresult}. 

We first introduce property $QF_d$. $QF$ here stands for `quasi-flat'. \begin{definition}[Property $\textrm{QF}_d$]
    Let $d\in \mathbb{N}$. 
    A metric space $X$ is said to satisfy property $\textrm{QF}_d$ if there a exist constant $K\geq 1$ such that for all $x,y\in X$ there exists $L>0$ and a $K$-quasi-isometric embedding $q:\mathbb{R}^d\rightarrow X$ such that $x,y\in \operatorname{Im}(q)$. 
\end{definition}
Clearly, property $QF_d$ is a quasi-isometry invariant. 
\\
Examples : 
\begin{enumerate}
    \item $\mathbb{R}^{d}$ equipped with the $\ell_\infty$-metric trivially satisfies $\textrm{QF}_d$. More generally, any $d$-dimensional Euclidean building $X$ satisfies property $\textrm{QF}_d$ as any pair of points in $X$ lies in an isometrically embedded copy of $\mathbb{R}^d$. 
    \item Any (quasi-)geodesic metric space satisfies $\textrm{QF}_1$. More generally, a product $X=\Pi_{i=1}^{d} X_i$ of $d$ geodesic metric spaces $X_1,X_2,\dots, X_d$ satisfies property $\textrm{QF}_d$. 
\end{enumerate}

We first prove a lemma which shows that a path-connected metric space is $k$ coarsely connected for all $k>0$.

\begin{lemma}\label{points}
Let $X$ be a metric space that is path-connected in the metric topology. Let $x,y \in X$ and let $\gamma$ be a continuous path joining $x$ to $y$. Then, for any $k>0$, there exists a finite sequence of points
$x = t_0, t_1, \dots, t_r = y$
such that, for all $0 \le i \le r-1$, $d(t_i,t_{i+1}) \le k$ and $t_i \in \operatorname{Im}(\gamma)$.
\end{lemma}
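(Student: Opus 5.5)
The natural route is uniform continuity of $\gamma$ on its compact domain. Parametrize $\gamma:[0,1]\to X$ with $\gamma(0)=x$ and $\gamma(1)=y$. Since $[0,1]$ is compact and $\gamma$ is continuous, $\gamma$ is uniformly continuous. So for the given $k>0$ there is $\delta>0$ such that $|s-s'|\le\delta$ implies $d(\gamma(s),\gamma(s'))\le k$.

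Next, choose an integer $r\ge 1/\delta$ and set $s_i=i/r$ for $0\le i\le r$, and $t_i=\gamma(s_i)$. Then $t_0=x$, $t_r=y$, and every $t_i$ lies in $\operatorname{Im}(\gamma)$ by construction. Consecutive parameters satisfy $|s_{i+1}-s_i|=1/r\le\delta$, so $d(t_i,t_{i+1})\le k$, which is exactly the required conclusion.

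No step is a genuine obstacle. The only point to check is that uniform continuity applies, and it does because the domain is compact. If one wants to avoid invoking uniform continuity, there is a fallback: let $s^*$ be the supremum of those $s\in[0,1]$ for which $\gamma(s)$ can be reached from $x$ by such a finite chain in $\operatorname{Im}(\gamma)$. Continuity of $\gamma$ at $s^*$ shows both that $s^*$ itself is reachable and that $s^*=1$ (otherwise one could go slightly beyond $s^*$). Either argument suffices.
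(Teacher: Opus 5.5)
Your proof is correct, and it takes a different route from the paper's.

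\textbf{The paper's route.} The paper never uses a parametrization. It covers the compact set $\operatorname{Im}(\gamma)$ by finitely many open balls of radius $k/2$. Connectedness of $\operatorname{Im}(\gamma)$ then gives a chain of successively overlapping balls from one containing $x$ to one containing $y$. Points are picked in consecutive overlaps, so any two consecutive points share a ball and are at distance less than $k$.

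\textbf{Your route.} You work in the domain $[0,1]$ and subdivide it finely enough that uniform continuity controls each step.

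\textbf{What each buys.} Your argument is shorter and complete as written, and it lists the points in order along $\gamma$. The paper's chain construction picks each next ball greedily to meet the previous one. As written, that needs an extra clopen argument to guarantee a chain actually reaching $y$ exists: the union of the balls reachable from the first one by chains is open and closed in $\operatorname{Im}(\gamma)$, hence is everything. Your fallback supremum argument is the parameter-space analogue of exactly that connectedness step, and it is also correct.

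In exchange, the paper's approach uses only that $\operatorname{Im}(\gamma)$ is compact and connected, so it applies to any compact connected set containing $x$ and $y$. That extra generality is not needed where the lemma is used, in the proof of Theorem~\ref{reduction}.
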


\begin{proof}
Since $\operatorname{Im}(\gamma)$ is compact, there exists a finite collection $\mathcal{B}$ of open balls of radius $\frac{k}{2}$ that covers $\operatorname{Im}(\gamma)$. Let
$\mathcal{F} = \{ B \cap \operatorname{Im}(\gamma) \mid B \in \mathcal{B} \}.$
Let $B_0 \in \mathcal{F}$ be such that $x \in B_0$. Let $B_1$ be an element of $\mathcal{F} \setminus \{B_0\}$ such that $B_0 \cap B_1 \neq \varnothing$; such an element exists since $\operatorname{Im}(\gamma)$ is path-connected. Similarly, let $B_2$ be an element of $\mathcal{F} \setminus \{B_0,B_1\}$ such that $B_2 \cap B_1 \neq \varnothing$. Proceeding inductively, we obtain a sequence
$B_0, B_1, B_2, \dots, B_r$
of sets in $\mathcal{F}$ such that $B_i \cap B_{i+1} \neq \varnothing$ for each $0 \le i \le r-1$ and $x \in B_0$, $y \in B_r$.
For $1 \le i \le r-1$, let $t_i$ be a point in $B_i \cap B_{i+1}$, and set $t_0 = x$, $t_r = y$. Then the sequence
$x = t_0, t_1, \dots, t_r = y$
satisfies the required properties. Indeed, for all $0 \le i \le r-1$, since both $t_i$ and $t_{i+1}$ lie in the same ball of radius $\frac{k}{2}$, we have
$d(t_i,t_{i+1}) \le k.$
\end{proof}
We are now ready to state the main result of this section:
\begin{theorem}\label{reduction}
Let $X$ be a geodesic metric space that satisfies property $\textrm{QF}_d$.
Let $\mathcal{B}$ be a family of subsets that coarsely separates $X$. Then there exist a sequence of metric spaces $\mathcal{A}=\{A_n\}_n$ and two sequences $(L_n)_n$ and $(b_n)_n$ of positive real numbers tending to infinity such that the following conditions are satisfied:
\begin{enumerate}
    \item $A_n$ is a subset of $C_n=[0,L_n]^d$.
    \item $C_n \setminus A_n$ consists of more than one connected component, and there exist points $x_n,y_n\in C_n$ such that $x_n$ and $y_n$ belong to different connected components of $C_n \setminus A_n$, and
    $$d(x_n,A_n\cup \partial C_n)>b_n \quad\text{and}\quad d(y_n,A_n\cup \partial C_n)>b_n.$$
    \item $\mathcal{A}$ admits a quasi-isometric embedding into $\mathcal{B}$. In particular, $ \operatorname{sdim}(\mathcal{A})\leq \operatorname{asdim}(\mathcal{B})$.
\end{enumerate}
\end{theorem}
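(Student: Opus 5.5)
Replacing $\mathcal{B}$ by $\mathcal{B}^{+L}$ as in Remark~\ref{remark:zero}, I will assume the separation constant is $0$: each $X\setminus B$ has at least two path components, and for every $D$ there are $B\in\mathcal{B}$ and $x,y$ in different components of $X\setminus B$ with $d(x,B),d(y,B)>D$. The idea is to pull $B$ back through a quasiflat containing $x$ and $y$. This gives a set in $\mathbb{R}^d$ that separates the preimages of $x,y$, after which I truncate to a cube.

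Fix $n$ and take $D=D_n$ large, to be chosen. Choose $B_n,x,y$ as above and, by $\textrm{QF}_d$, a $K$-quasi-isometric embedding $q:\mathbb{R}^d\to X$ with $x=q(x'),\ y=q(y')$. Since $X$ is geodesic, I first replace $q$ by a continuous map. Triangulate $\mathbb{R}^d$ at unit scale, keep $q$ on vertices, and extend over simplices by induction on skeleta using geodesics (coning). This gives a continuous $\hat q$ within bounded distance $c(K)$ of $q$, and $\hat q$ is still a quasi-isometric embedding.

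Set
\[
A'_n=\hat q^{-1}\bigl(N_{R}(B_n)\bigr)
\]
for a fixed $R\ge c(K)$. I then take a cube $C_n=[0,L_n]^d$, after translating, containing $x',y'$ at distance $\ge b_n$ from $\partial C_n$, and put $A_n=A'_n\cap C_n$.

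\textbf{Separation.} Suppose a path in $C_n\setminus A_n$ joined $x'$ to $y'$. Its image under $\hat q$ is a continuous path in $X\setminus N_R(B_n)\subseteq X\setminus B_n$ from $x$ to $y$, which is impossible. Lemma~\ref{points} is the tool if one prefers to pass through discrete chains instead.

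\textbf{Distance bounds.} A point $a\in A_n$ has $d(\hat q(a),B_n)\le R$. Since $d(x,B_n)>D$, we get $d(x,\hat q(a))>D-R$. Because $q$ is a $K$-quasi-isometric embedding and $\hat q$ is within $c(K)$ of $q$, this forces $d(x',a)\ge (D-R-c-K)/K$. Choosing $D_n$ so this exceeds $b_n:=n$, and taking $L_n$ large enough to keep $x',y'$ at distance $>b_n$ from $\partial C_n$, gives condition (2). Nonemptiness of $A_n$ follows since a separating set must meet any path from $x'$ to $y'$.

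\textbf{Embedding into $\mathcal{B}$.} Send $a\in A_n$ to a point $\pi(a)\in B_n$ with $d(\hat q(a),\pi(a))\le R+1$. This map $A_n\to B_n$ is a $K'$-quasi-isometric embedding, with $K'$ depending only on $K$, $R$ and $c$, because $\hat q$ is and $\pi$ moves points a bounded amount. So $\mathcal{A}$ quasi-isometrically embeds into $\mathcal{B}$, and Lemma~\ref{asdiminvariance} gives $\operatorname{asdim}(\mathcal{A})\le\operatorname{asdim}(\mathcal{B})$.

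The main obstacle is the continuity issue. Separation in $C_n\setminus A_n$ is topological, while $q$ is only coarse. The fix is the continuous replacement $\hat q$ built from geodesics, together with thickening $B_n$ by a uniform $R$ so the error between $\hat q$ and $q$ is absorbed. The remaining steps are bookkeeping of constants.
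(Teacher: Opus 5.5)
\textbf{The gap: the continuous replacement $\hat q$.} Your separation argument depends on $\hat q$, and this step is not available in an arbitrary geodesic space. Coning a $2$-simplex from a vertex $v$ needs, for each $z$ on the opposite edge, a geodesic from $\hat q(v)$ to $\hat q(z)$ that varies continuously in $z$. Geodesics need not be unique, and in general no continuous choice exists. More fundamentally, the three geodesic edges of a $2$-simplex form a loop in $X$, and extending over the simplex requires this loop to be null-homotopic, which can fail.

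This really happens. Let $X$ be the Cayley graph of $\mathbb{Z}^2$, a geodesic space satisfying $\mathrm{QF}_2$. Suppose $\hat q\colon\mathbb{R}^2\to X$ were continuous and within bounded distance of the standard quasi-isometry. Composing with the inclusion of the grid into $\mathbb{R}^2$ gives a continuous self-map of $\mathbb{R}^2$ at bounded distance from the identity. A winding-number argument shows such a map is surjective, yet its image lies in the grid. So $A'_n=\hat q^{-1}(N_R(B_n))$ cannot be formed in general, and your one-line separation argument has nothing to stand on. Your construction does work when $X$ has a continuous geodesic bicombing, e.g.\ CAT(0) spaces such as Euclidean buildings, but not in the generality of the statement. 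Separately, even when $\hat q$ exists, $\hat q(x')=x$ only if $x'$ is a vertex, so a small correction is needed there too.

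\textbf{The repair.} The route you mention only parenthetically is the one that works, and it is what the paper does: keep the discontinuous quasi-isometric embedding $q$ and argue path by path. Put $A_n=q^{-1}(N_{2K}(B_n))\cap C_n$.
\begin{itemize}
\item Given a path $\gamma$ in $C_n$ from $x'$ to $y'$, Lemma~\ref{points} gives points $x'=u_0,\dots,u_r=y'$ on $\gamma$ with $d(u_i,u_{i+1})\le 1$. Hence $d(q(u_i),q(u_{i+1}))\le 2K$.
\item Joining consecutive images by geodesics gives a continuous path in $X$ from $x=q(x')$ to $y=q(y')$. This path must meet $B_n$.
\item If a geodesic segment of length at most $2K$ meets $B_n$, both its endpoints lie in $N_{2K}(B_n)$. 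So some $u_i\in\gamma$ lies in $A_n$.
\end{itemize}
Only one-dimensional interpolation along a single path is used, and geodesics suffice for that. With $q$ in place of $\hat q$, the rest goes through unchanged: your distance estimate becomes $d(x',a)>(D-3K)/K$ for $a\in A_n$, and your embedding $A_n\to B_n$ is as before.
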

 
   \begin{proof} 
    Let $K\geq 1$ be such that any two points lie in the image of some $K$-quasi-isometric embedding $p$ of $\mathbb{R}^{d}$ into $X$. Let $K'=\max\{K,3K^2,3\}$. As noted in Remark \ref{remark:zero}, assume the constant $L$ in the definition of coarse separation to be zero. 
\par 
   First, we define the sequence $\{A_n\}_n$ as follows: Given $n\in \mathbb{N}$, there exists $B_n\in \mathcal{B}$ such that $X\setminus B_n$ contains more than one path component and there exist points $s_n,t_n\in X$ such that $s_n$ and $t_n$ lie in different path-components of $X\setminus B_n$ and $d(s_n,B_n), d(t_n,B_n)>\max\{\frac{n+K'}{K'},3K'\}$. Let $p_n:\mathbb{R}^{d}\rightarrow X$ be an $K$-quasi-isometric embedding such that $s_n,t_n\in Im(p_n)$. Let $q_n:Im(p_n)\rightarrow \mathbb{R}^d$ be a $K'$-quasi-inverse of $p_n$. Then, $q_n$ is a $K'$ quasi-isometry between $Im(p_n)$ and $\mathbb{R}^d$. Define $x_n:=q_n(s_n)$ and $y_n:=q_n(t_n)$. Let $C_n$ be a $d$-dimensional cube in $\mathbb{R}^{d}$ such that the interior of $C_n$ contains both $B(x_n,n)$ and $B(y_n,n)$. We assume that the sides of $C_n$ are parallel to the principle axes. Let $A_n=p_n^{-1}(N_{2K'}(B_n))\cap C_n$. Let $L_n$ denote the side length of $C_n$.

\par
   Next we show that $x_n$ and $y_n$ lie in distinct path-components of  $C_n\setminus A_n$. Let $\gamma$ be a continuous path in $C_n$ joining $x_n$ to $y_n$. We show that $\gamma$ necessarily intersects $A_n$. This is equivalent to showing that $p_n\circ \gamma$ intersects $N_{2K'}(B_n)$. Note that $p_n\circ \gamma$ joins $p_n\circ q_n(s_n)$ to $p_n\circ q_n(t_n)$. As $d(s_n,p_n\circ q_n(s_n)),d(t_n,p_n\circ q_n(t_n))\leq K'$ it follows that $ p_n\circ q_n(s_n)$ and $p_n\circ q_n(t_n)$ lie in distinct components of $X\setminus B_n$. By Lemma \ref{points}, there exists a sequence of points $x_n=u_1,u_2,\dots , u_r=y_n$ such that, for all $1\leq i\leq r-1$, $d(u_i,u_{i+1})\leq 1$ and $u_i\in \operatorname{Im}(\gamma)$. Consider the sequence $q_n(u_1),q_n(u_2),\dots,q_n(u_r)$. For all $ 1\leq i \leq r-1$, $d(q_n(u_i),q_n(u_{i+1}))\leq K'd(u_i,u_{i+1})+K'=2K'$. For $1\leq i\leq r-1$, let $\zeta_i$ be a geodesic that joins $q_n(u_i)$ to $q_n(u_{i+1})$. Let $\zeta$ denote the path joining $ p_n\circ q_n(s_n)$ to $p_n\circ q_n(t_n)$ by successively concatenating $\zeta_i$'s: $\zeta=\zeta_1\star\zeta_2\star \dots \star \zeta_r$. $\zeta$ necessarily intersects $B_n$ as $ p_n\circ q_n(s_n)$ and $p_n\circ q_n(t_n)$ lie in distinctcomponents of $ X\setminus B_n$. Suppose for some $1\leq k\leq r-1$, $ \zeta_k \cap B_n  \neq \emptyset$ then one of the endpoints of $\zeta_k$ must lie in $ N_{2K'}(B_n).$
   \par
    Now we show that $d(x_n,A_n),d(y_n,A_n)>n$. Let $z\in A_n$. Let $v\in N_{2K'}(B_n) $ be such that  $z=q_{n}(v)$. As $d(s_n,B_n)>\frac{n+K'}{K'}$, we have that $d(s_n,v)>\frac{n+K'}{K'}$. Using the fact that $q_n$ is a $K'$-quasi-isometry we get,  $d(z,x_n)=d(q_n(v),q_n(s_n))\geq K'd(s_n,v)-K'>n$. 
\par
   Now we verify (3). The map $p_n: A_n\rightarrow N_{2K'}(B_n)$ is a $K$-quasi-isometric embedding of $A_n$ into $N_{2K'}(B_n)$. For each $n$, one can construct a natural $4K'$-quasi-isometric embedding $r_n:N_{2K'}(B_n)\rightarrow B_n$ such that $r_n$ sends each $b\in N_{2K'}(B_n)$ to a point $r_n(b)$ such that each $d(b,r_n(b))\leq 2K'$.  Composing these two maps gives us the required quasi-isometric embedding. 
\end{proof}

\section{Connectivity and Regular Neighborhoods of Subcomplexes}\label{section:regnbd}

In this section we prove two lemmas about simplicial complexes. Both will be used in the proof of Theorem~\ref{mainresult}.

The first lemma gives a combinatorial criterion for when the complement of a subcomplex is path-connected.
\begin{lemma}\label{connected}
Let $K$ be a simplicial complex and let $L$ be a subcomplex of $K$. Let $Bd(K)$ denote the barycentric subdivision of $K$. Let $x,y \in K^{(0)} \setminus L^{(0)}$. Then there exists a continuous path $\gamma:[0,1]\rightarrow |K|\setminus |L|$ joining $x$ to $y$ if and only if there exists an edge-path in $Bd(K)$ joining $x$ to $y$ that does not intersect $Bd(L)$.
\end{lemma}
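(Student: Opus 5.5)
The easy direction is the implication from edge-paths to continuous paths, so I will do it first. Suppose $x=w_0,w_1,\dots,w_m=y$ is an edge-path in $Bd(K)$ none of whose vertices or edges meets $Bd(L)$. Here $|Bd(K)|=|K|$ and $|Bd(L)|=|L|$. The barycentric subdivision of a subcomplex is a full subcomplex: a simplex of $Bd(K)$ lies in $Bd(L)$ as soon as all of its vertices do. For this reason it suffices to argue edge by edge. Fix an edge $[w_i,w_{i+1}]$ with neither endpoint in $Bd(L)$. Its relative interior lies in the interior of a simplex $\tau$ of $K$, namely the larger of the two simplices whose barycenters are $w_i,w_{i+1}$. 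If the closed edge met $|L|$, it would meet some closed simplex $\rho$ of $L$. Since $\rho\cap\tau$ is a face of $\tau$, the edge would then touch the interior of a simplex of $Bd(L)$. Carriers in $Bd(K)$ are unique, so that simplex of $Bd(L)$ would be a face of $[w_i,w_{i+1}]$, and hence would be $w_i$ or $w_{i+1}$. This contradicts the assumption. Concatenating the edges therefore gives a continuous path in $|K|\setminus|L|$.

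For the converse, I use the standard deformation of $|K|\setminus |L|$ onto the full subcomplex $N$ of $Bd(K)$ spanned by the barycenters $b_\sigma$ with $\sigma\notin L$. This is the complementary subcomplex, and it avoids $|L|$. Take a point $z=(t_v)\in|K|\setminus|L|$ in barycentric coordinates of $K$, and let $W(z)=\{v: t_v>0,\ v\notin L^{(0)}\}$. This set is nonempty, since otherwise the support of $z$ would lie in $L^{(0)}$. That gives support in $L$ only if $L$ is full in $K$, so I will instead work in $Bd(K)$: replace $K$ by $Bd(K)$ as the ambient complex and $L$ by $Bd(L)$, which is full. I must then check that the conclusion is phrased compatibly, and I may need to apply the argument once more in the subdivided complex. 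With $L$ full, the straight-line homotopy that kills the coordinates of vertices in $L$ and renormalizes, $z\mapsto \sum_{v\in W(z)}t_v v/\sum_{v\in W(z)} t_v$, is well defined and continuous on $|K|\setminus|L|$. It fixes $x$ and $y$, and it retracts onto the full subcomplex spanned by the vertices outside $L$. Composing $\gamma$ with this retraction gives a path from $x$ to $y$ inside a subcomplex disjoint from $|L|$.

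It remains to convert a continuous path inside a subcomplex $M$ with $|M|\cap|L|=\emptyset$ into an edge-path. Path components of a simplicial complex coincide with edge-path components of its $1$-skeleton, because each simplex is connected and contains its vertices. Hence $x$ and $y$ are joined by an edge-path in $M$, or in $Bd(M)$ after subdividing. None of its vertices lie in $Bd(L)$, because $|M|\cap|L|=\emptyset$. This is the required edge-path in $Bd(K)$ avoiding $Bd(L)$.

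The main obstacle is the fullness issue in the converse. The vertex set of the path must be taken in $Bd(K)$, and the retraction needs $L$ to be full, which holds for $Bd(L)\subseteq Bd(K)$ but not in general for $L\subseteq K$. I would therefore carry out the whole retraction argument in $Bd(K)$ from the start, with the subcomplex $Bd(L)$, and obtain a path in the full subcomplex of $Bd(K)$ spanned by the vertices not in $Bd(L)$. By the connectivity fact, that subcomplex contains an edge-path from $x$ to $y$ directly.
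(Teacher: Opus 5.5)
Your proof is correct, but it takes a different route from the paper's.

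\textbf{The paper's route.} For the converse, the paper subdivides until $\gamma$ misses the open star $\operatorname{st}(Bd^n(L))$. It then replaces $\gamma$ by a simplicial approximation $g$ into the complementary subcomplex of $Bd^n(K)$. Each vertex $g(t_i)$ is sent to the barycenter of its carrier simplex $\sigma_i\in K$. Consecutive carriers are nested, so these barycenters form an edge-path in $Bd(K)$. Since $\sigma_i\notin L$, that edge-path stays off $Bd(L)$.

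\textbf{Your route.} You use that $Bd(L)$ is full in $Bd(K)$ to retract $|K|\setminus|L|$ onto the full subcomplex $M$ of $Bd(K)$ spanned by the barycenters of simplices not in $L$. You then invoke the standard fact that path components of a complex are the edge-path components of its $1$-skeleton.

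\textbf{What each buys.}
\begin{itemize}
\item Your argument needs no simplicial approximation and no compactness or iterated-subdivision step.
\item It also proves more. The straight-line homotopy $(1-s)z+s\,r(z)$ keeps the support vertices $W(z)\neq\emptyset$ for all $s$, so it never meets $|L|$. Hence $M$ is a deformation retract of $|K|\setminus|L|$ and captures its whole homotopy type, not just its path components.
\item It makes explicit why one must pass to $Bd(K)$, namely fullness. The paper's barycenter trick uses this only implicitly.
\item Your treatment of the easy direction correctly handles the reading ``no vertex of the edge-path lies in $Bd(L)$'' via fullness, which the paper simply calls immediate.
\item The paper's argument, by contrast, is constructive on the given path: it builds the edge-path by tracking $\gamma$ through carrier simplices of $K$.
\end{itemize}

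\textbf{Two small remarks.}
\begin{itemize}
\item Once you commit, as in your last paragraph, to working with $Bd(L)\subseteq Bd(K)$ from the outset, a single retraction suffices. The hedging about applying the argument again, or passing to $Bd(M)$, can be dropped.
\item In the easy direction, the carrier of an intersection point could be the whole edge rather than one endpoint. In that case both endpoints lie in $Bd(L)$, so the contradiction still stands.
\end{itemize}
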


\begin{proof}
The “if” direction is immediate: any edge-path in $Bd(K)$ avoiding $Bd(L)$ is a continuous path in $|K|\setminus |L|$ joining $x$ to $y$.

For the converse, let $\gamma:[0,1]\to |K|\setminus |L|$ be a continuous path from $x$ to $y$. We modify $\gamma$ in two steps.

First, pass to a sufficiently fine barycentric subdivision. For $n\in \mathbb{N}$, let $Bd^n(K)$ denote the $n$-fold barycentric subdivision of $K$, and note that $Bd^n(L)$ is naturally a subcomplex of $Bd^n(K)$. Choose $n$ large enough so that the image of $\gamma$ is disjoint from the open star $\operatorname{st}(Bd^n(L))$. Let $U$ be the corresponding open subset of $|K|$. Then $\gamma$ is a map
$$
\gamma:[0,1]\to |K|\setminus U,
$$
and $|K|\setminus U$ is the geometric realization of $Bd^n(K)\setminus \operatorname{st}(Bd^n(L))$.

Applying the simplicial approximation theorem, there exists $m\in \mathbb{N}$ and a simplicial map
$$
g: Bd^m([0,1]) \to Bd^n(K)\setminus \operatorname{st}(Bd^n(L))
$$
which is homotopic to $\gamma$ relative to $\{0,1\}$. In particular, $g(0)=x$ and $g(1)=y$.

Let $t_i = i/m$ for $0\le i \le m$. Then each $g(t_i)$ is a vertex of $Bd^n(K)\setminus \operatorname{St}(Bd^n(L))$, hence lies in the interior of a unique simplex $\sigma_i$ of $K$. For each $0\le i \le m-1$, exactly one of the following holds:
\begin{enumerate}
\item $\sigma_i = \sigma_{i+1}$,
\item $\sigma_i$ is a face of $\sigma_{i+1}$,
\item $\sigma_{i+1}$ is a face of $\sigma_i$.
\end{enumerate}

Let $b_i$ denote the barycenter of $\sigma_i$. Since $x$ and $y$ are vertices of $K$, we have $b_0 = x$ and $b_m = y$. In each of the above cases, $[b_i,b_{i+1}]$ is an edge of $Bd(K)$. Thus the sequence $b_0,\dots,b_m$ determines an edge-path in $Bd(K)$ from $x$ to $y$.

Finally, since each $\sigma_i$ avoids $L$, none of the vertices $b_i$ lie in $Bd(L)$, and hence the resulting edge-path does not intersect $Bd(L)$.
\end{proof}
The second lemma proves the existence of an open neighborhood $U$ of a subcomplex $Y\subseteq X$ such that $U$ deformation retracts onto $Y$ and $X\setminus Y$ deformation retracts onto $X\setminus U$. 
\\ 

\begin{lemma}\label{nicenbd}
Let $X$ be a simplicial complex and $Y \subseteq X$ a subcomplex. Then there exists a neighborhood $U$ of $|Y|$ such that $U$ deformation retracts onto $|Y|$, and $|X|\setminus |Y|$ deformation retracts onto $|X|\setminus U$.    
\end{lemma}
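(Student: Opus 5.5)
The plan is to use the standard regular neighbourhood of $Y$ in the second barycentric subdivision. After replacing $X$ by $Bd(X)$, which does not change $|X|$ or $|Y|$, we may assume $Y$ is a \emph{full} subcomplex of $X$: any simplex of $X$ whose vertices all lie in $Y$ is itself a simplex of $Y$. This holds because in a barycentric subdivision a simplex is a chain of faces, and if every face in the chain lies in $Y$, the chain is a simplex of $Bd(Y)$.

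With $Y$ full, every point $z\in|X|$ has barycentric coordinates $(t_v)$, and we set
$$f(z)=\sum_{v\in Y^{(0)}} t_v(z)\in[0,1].$$
This is a continuous simplicial-linear function with $f^{-1}(1)=|Y|$ by fullness. Let $Z$ be the subcomplex of simplices having no vertex in $Y$; then $f^{-1}(0)=|Z|$. Every point $z$ with $0<f(z)<1$ lies in a simplex $\sigma=\sigma_Y*\sigma_Z$, the join of its $Y$-face and its $Z$-face. So $z$ can be written uniquely as
$$z=f(z)\,y(z)+(1-f(z))\,w(z),\qquad y(z)\in|\sigma_Y|,\ w(z)\in|\sigma_Z|.$$
The maps $y$ and $w$ are continuous on $f^{-1}((0,1))$. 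This join structure is the key tool.

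Take $U=f^{-1}((1/2,1])$, which is an open neighbourhood of $|Y|$, and define two homotopies.
\begin{itemize}
\item On $U$, let $H_s(z)=(1-s)z+s\,y(z)$ for $z\notin|Y|$, and $H_s(z)=z$ on $|Y|$. Along this path $f$ increases monotonically from $f(z)$ to $1$, so the path stays in $U$. Hence $H$ is a deformation retraction of $U$ onto $|Y|$.
\item On $|X|\setminus|Y|=f^{-1}([0,1))$, define $G_s$ to be the identity where $f\le 1/2$. Where $f(z)>1/2$, set
$$G_s(z)=\lambda_s\,y(z)+(1-\lambda_s)\,w(z),\qquad \lambda_s=(1-s)f(z)+s/2.$$
This slides $z$ along the segment from $y(z)$ to $w(z)$ until $f=1/2$. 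Since $f(z)<1$, the point $w(z)$ is defined, so the homotopy never enters $|Y|$. It ends in $f^{-1}([0,1/2])=|X|\setminus U$ and fixes that set pointwise, so it is a deformation retraction.
\end{itemize}

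The main obstacle is continuity. We must check that $y(\cdot)$ and $w(\cdot)$ are continuous across simplices and at the boundary strata. For $H$ this includes continuity as $z\to|Y|$, where $y(z)\to z$ because the $Z$-coordinates tend to $0$. For $G$ we need continuity where $f=1/2$, and there the formula agrees with the identity. Both checks reduce to the fact that the barycentric coordinates are continuous. They are compatible with face inclusions, since $y$ is obtained by normalising the $Y$-coordinates and $w$ by normalising the $Z$-coordinates. One can use the weak topology simplexwise, or finiteness if $X$ is finite, which is the case in our applications.
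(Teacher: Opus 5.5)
Your proposal is correct and is essentially the paper's proof. After passing to $Bd(X)$, the paper uses exactly your neighbourhood $U=\{q<\tfrac12\}$ (with $p=f(z)$, $q=1-f(z)$), and its homotopies $H$ and $F$ are literally your $H_s$ and $G_s$ written in barycentric coordinates; your appeal to fullness of $Bd(Y)$ in $Bd(X)$ is, if anything, a more precise statement of the property the paper invokes when it says $Bd(Y)$ is flag.
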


\begin{proof}
Let $Bd(X)$ denote the barycentric subdivision of $X$, and let $Bd(Y)$ denote the barycentric subdivision of $Y$, viewed as a subcomplex of $Bd(X)$.

We first construct the neighborhood $U$. Any $n$-simplex $\sigma \in \operatorname{St}(Bd(Y))$ can be written as
$
[x_{0},x_{1},\dots,x_k,x_{k+1},\dots,x_{n}],
$
where $[x_{0},\dots,x_k]$ is a simplex in $Bd(Y)$ and $x_i \notin Bd(Y)^{(0)}$ for all $k+1 \leq i \leq n$. This uses the fact that $Bd(Y)$ is a flag complex: if $\sigma \in \operatorname{St}(Bd(Y))$ but $\sigma \notin Bd(Y)$, then at least one vertex lies outside $Bd(Y)$.

Any point $x \in \sigma$ can be written as
$
x = \sum_{i=0}^{n} t_i x_i, \qquad \sum_{i=0}^{n} t_i = 1.
$
For each such simplex $\sigma$, define
$$
U_{\sigma} = \left\{ x \in \sigma \,\middle|\, \sum_{i=k+1}^{n} t_i \in [0,\tfrac{1}{2}) \right\},
$$
and set $U = \bigcup_{\sigma \in \operatorname{St}(Bd(Y))} U_{\sigma}$.

We now define a deformation retraction $H$ of $U$ onto $|Y|$. For $x \in U_\sigma$, write $p = \sum_{i=0}^{k} t_i$. Define
$$
H(x,s) = (p + s(1-p))\left(\sum_{i=0}^{k} \frac{t_i}{p} x_i\right) + (1-s)\left(\sum_{i=k+1}^{n} t_i x_i\right).
$$
Then $H(\cdot,s)$ fixes $[x_0,\dots,x_k]$ pointwise (i.e. when $p=1$), and $H(x,1) \in Bd(Y)$ for all $x \in U$. Moreover, the definitions agree on intersections of simplices, since they are compatible under restriction to faces. Hence $H$ defines a deformation retraction of $U$ onto $|Y|$.

Next, we construct a deformation retraction of $|X|\setminus |Y|$ onto $|X|\setminus U$. Let $\sigma = [x_0,\dots,x_n] \in \operatorname{St}(Bd(Y))$ as above. Then
$$
(|X|\setminus |Y|)\cap \sigma = \left\{ x \in \sigma \,\middle|\, \sum_{i=k+1}^{n} t_i \in (0,1] \right\},
$$
while
$$
(|X|\setminus U)\cap \sigma = \left\{ x \in \sigma \,\middle|\, \sum_{i=k+1}^{n} t_i \in [\tfrac{1}{2},1] \right\}.
$$

Write $p = \sum_{i=0}^{k} t_i$ and $q = \sum_{i=k+1}^{n} t_i$. Define
$$
F(x,s) =
\begin{cases}
x & \text{if } q \in [\tfrac{1}{2},1], \\
(p + s(\tfrac{1}{2}-p))\left(\sum_{i=0}^{k} \frac{t_i}{p} x_i\right)
+ (q + s(\tfrac{1}{2}-q))\left(\sum_{i=k+1}^{n} \frac{t_i}{q} x_i\right)
& \text{if } q \in (0,\tfrac{1}{2}).
\end{cases}
$$

 $F(\cdot,s)$ restricts to the identity on $(|X|\setminus U)\cap \sigma$ for all $s$, and $F(x,1) \in (|X|\setminus U)\cap \sigma$ for all $x \in (|X|\setminus |Y|)\cap \sigma$. As before, these maps are compatible on overlaps of simplices, and hence glue to a global deformation retraction of $|X|\setminus |Y|$ onto $|X|\setminus U$.
\end{proof}
\begin{corollary}\label{cor:nbd}
Let $f:X\rightarrow Y$ be a simplicial map between simplicial complexes $X$ and $Y$. Let $L$ be a subcomplex of $Y$ and let $K = f^{-1}(L)$. Then there exists an open neighborhood $V$ of $L$ such that 
\begin{enumerate}
\item $V$ deformation retracts onto $L$, 
\item $Y \setminus L$ deformation retracts onto $Y \setminus V$,
\item $U = f^{-1}(V)$ deformation retracts onto $K$, and 
\item $X \setminus K$ deformation retracts onto $X \setminus U$.
\end{enumerate}
\end{corollary}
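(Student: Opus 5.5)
The plan is to run the construction of Lemma~\ref{nicenbd} on $Y$ only, and then pull everything back to $X$ along the \emph{original} map $|f|$. For this to work, the neighborhood $V$ and its two deformations must be described by a single function $q\colon |Y|\to[0,1]$ that is affine on the pieces on which $|f|$ is affine.

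\textbf{Step 1: $K$ is a subcomplex.} The map $|f|$ is affine on each simplex $\sigma$ of $X$ and maps it onto the simplex $f(\sigma)$. The set $|L|\cap f(\sigma)$ is a union of faces of $f(\sigma)$. The preimage in $\sigma$ of a face $\tau\le f(\sigma)$ is the face of $\sigma$ spanned by the vertices sent into $\tau$. Hence $K=|f|^{-1}(|L|)$ is a subcomplex of $X$.

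\textbf{Step 2: the function $q$ and the neighborhoods.} Pass to $Bd(Y)$, in which $Bd(L)$ is a full subcomplex. For a point of a simplex $[x_0,\dots,x_n]$ of $Bd(Y)$, with $x_0,\dots,x_k\in Bd(L)$ and $x_{k+1},\dots,x_n\notin Bd(L)$, set $q=\sum_{i>k}t_i$. Fullness gives $q^{-1}(0)=|L|$. Let $V=\{q<\tfrac12\}$, exactly the set $U$ of Lemma~\ref{nicenbd}. The deformations $H$ and $F$ of that proof give (1) and (2).

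\textbf{Step 3: pulling back without subdividing $f$.} Rather than subdividing $f$, subdivide $X$ by pullback. Each set $\sigma\cap |f|^{-1}(\rho)$, with $\sigma\in X$ and $\rho\in Bd(Y)$, is a convex polytope in $\sigma$, being the preimage of a convex set under an affine map. These polytopes form a convex cell decomposition $X'$ of $|X|$. Each cell of $X'$ is mapped affinely by $|f|$ into a simplex $\rho$ of $Bd(Y)$. Since $q$ is affine on each $\rho$, the function $q\circ|f|$ is affine on each cell of $X'$. Then $U=|f|^{-1}(V)=\{q\circ|f|<\tfrac12\}$ and $K=\{q\circ|f|=0\}$; the latter is a union of cells of $X'$ and, as a set, is the subcomplex of Step~1.

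\textbf{Step 4: the deformations on $X$.} Define the deformations on $X$ cell by cell. A cell $c$ of $X'$ decomposes as a join-like region between its faces $c_0=c\cap\{q\circ|f|=0\}$ and $c_1=c\cap\{q\circ|f|=1\}$. This is because an affine function on a polytope attains its extreme values on faces, and every point of $c$ lies on a segment from $c_0$ to $c_1$ along which $q\circ|f|$ is linear. Moving points along these segments, parametrized by the value of $q\circ|f|$, gives the analogues of $H$ and $F$:
\begin{itemize}
\item pushing $q\circ|f|\to0$ yields a deformation retraction of $U$ onto $K$, which is (3);
\item pushing $q\circ|f|\to\tfrac12$ on $\{0<q\circ|f|<\tfrac12\}$ yields a deformation retraction of $|X|\setminus K$ onto $|X|\setminus U$, which is (4).
\end{itemize}
To make the segment choice canonical, triangulate $X'$ without adding vertices in $\{0<q\circ|f|<1\}$, for instance by a derived subdivision ordered so that these vertices come last. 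The formulas of Lemma~\ref{nicenbd}, written in terms of $p$ and $q$ only, then apply verbatim, and they agree on common faces.

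\textbf{Main obstacle.} The hard step is Step~4. We must check that the polytope-wise deformations are well defined and continuous across cells of $X'$; a point may lie on several segments, so the parametrization must be fixed carefully. We must also check that the level sets of $q\circ|f|$ behave exactly as the level sets of $q$ do in Lemma~\ref{nicenbd}.

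I would first try to bypass the segment choice entirely. The idea is to pick a triangulation $X''$ of $X'$ whose vertices all satisfy $q\circ|f|\in\{0,1\}$; since $q\circ|f|$ is affine on cells, it is then the simplicial coordinate function for the full subcomplex $\{q\circ|f|=0\}$ of $X''$. This makes $K$ a full subcomplex of $X''$ with $q\circ|f|$ exactly the ``$q$'' of Lemma~\ref{nicenbd}. The proof of that lemma then gives (3) and (4) directly, with the same threshold $\tfrac12$, so $U=|f|^{-1}(V)$ on the nose.
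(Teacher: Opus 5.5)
Your route differs from the paper's and can be made to work. As written, though, it rests on one claim you never prove: that every vertex of the pullback cell complex $X'$ lies on the level $\{q\circ|f|=0\}$ or $\{q\circ|f|=1\}$.

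\textbf{The missing vertex claim.} Both your ``join-like'' description of a cell $c$ and the existence of the triangulation $X''$ in your last paragraph depend on this claim, because every vertex of $X'$ must be a vertex of any triangulation of $X'$. The reason you give (an affine function on a polytope attains its extremes on faces) does not establish it. A general affine function can have vertices at intermediate levels, and then $c$ is not swept out by segments from $c_0$ to $c_1$. Your derived-subdivision suggestion fails for the same reason: it inserts interior points of cells, which typically have intermediate values.

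\textbf{How to prove it.} The claim is true for these particular cells. Group the vertices of $\sigma$ by their images $w_j$ and write $x=\sum_j s_jp_j$. Here $s_j$ is the total barycentric weight on $f^{-1}(w_j)\cap\sigma$ and $p_j$ is the normalized point of the face it spans, so $|f|(x)=\sum_j s_jw_j$.
\begin{itemize}
\item Suppose $y=|f|(x)$ is not a vertex of $Bd(Y)$. Its carrier in $Bd(Y)$ is then a positive-dimensional face $\tau$ of $\rho$.
\item The open simplex of $\tau$ lies in the open simplex of $Y$ spanned by $\{w_j : s_j>0\}$.
\item Move $y$ to $y\pm\varepsilon w$ inside the open simplex of $\tau$ and keep the $p_j$ fixed. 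This gives two distinct points $x^\pm=\sum_j s_j^\pm p_j$ of the cell $\sigma\cap|f|^{-1}(\rho)$ whose midpoint is $x$.
\end{itemize}
So vertices of $X'$ map to vertices of $Bd(Y)$, where $q\in\{0,1\}$. Combine this with the standard fact that a polyhedral complex can be triangulated without new vertices (e.g.\ a pulling triangulation for a fixed vertex order). Then $q\circ|f|$ is exactly the coordinate function of the full subcomplex $|K|$ of $X''$. Applying the formulas of Lemma~\ref{nicenbd} directly to $X''$, with no further subdivision, gives (3) and (4) with $U=|f|^{-1}(V)$ on the nose.

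\textbf{Comparison with the paper.} The paper sidesteps all of this by subdividing both complexes and replacing $f$ by the induced simplicial map $Bd(f)\colon b_\sigma\mapsto b_{f(\sigma)}$. Since $Bd(f)^{-1}(Bd(L))=Bd(K)$ on vertices, and simplicial maps push barycentric coordinates forward, one gets $q_Y\circ|Bd(f)|=q_X$. Hence $|Bd(f)|^{-1}(V)$ is literally the neighborhood of Lemma~\ref{nicenbd} for $Bd(K)\subseteq Bd(X)$, and (3) and (4) are immediate.

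The price is that $|Bd(f)|\neq|f|$ as soon as $f$ collapses a simplex. For example, take $[a,b,c]\to[u,v]$ with $a,b\mapsto u$ and $c\mapsto v$. The barycenter of $[a,b,c]$ goes to $(2u+v)/3$ under $|f|$ but to $(u+v)/2$ under $|Bd(f)|$, and in this example $|f|^{-1}(V)\neq U$. So the paper really proves the statement for the subdivided map. That is enough for Lemma~\ref{mainlemma}, provided $Bd(f)$ is used throughout that argument: it is homotopic to $|f|$ through maps of the simplicial pairs involved, so the induced maps on their cohomology agree.

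Your approach, once the vertex claim is supplied, proves the statement for $|f|$ itself. It costs the pullback cell structure, the vertex lemma above, and a no-new-vertex triangulation.
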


\begin{proof}
Let $Bd(X)$ and $Bd(Y)$ denote the barycentric subdivisions of $X$ and $Y$, respectively. Similarly, let $Bd(K)$ and $Bd(L)$ denote the barycentric subdivisions of $K$ and $L$. By a slight abuse of notation, we denote the induced simplicial map $Bd(X)\rightarrow Bd(Y)$ by $f$. Then $f^{-1}(Bd(L)) = Bd(K)$.

As in the proof of Lemma~3.2, any $n$-simplex $\sigma \in \operatorname{St}(Bd(K))$ can be written as\\ $
[x_{0},x_{1},\dots,x_k,x_{k+1},\dots,x_{n}],$
where $[x_{0},\dots,x_k]$ is a simplex in $Bd(K)$ and $x_i \notin Bd(K)^{(0)}$ for all $k+1 \leq i \leq n$. 

For each such simplex $\sigma$, define
$$
U_{\sigma} = \left\{ x = \sum_{i=0}^{n} t_i x_i \,\middle|\, \sum_{i=k+1}^{n} t_i \in [0,\tfrac{1}{2}) \right\},
$$
and set $U = \bigcup_{\sigma \in \operatorname{St}(Bd(K))} U_{\sigma}$.

By Lemma~3.2, $U$ deformation retracts onto $|K|$, and $|X|\setminus U$ deformation retracts onto $|X|\setminus K$.

We define $V \subseteq Bd(Y)$ analogously using $\operatorname{St}(Bd(L))$. For each simplex $\sigma \in \operatorname{St}(Bd(L))$, define $V_\sigma$ by the same condition, and set $V = \bigcup V_\sigma$. Again by Lemma~3.2, $V$ deformation retracts onto $|L|$, and $|Y|\setminus V$ deformation retracts onto $|Y|\setminus L$.

Finally, by construction of the neighborhoods and the compatibility of $f$ with barycentric subdivision, we have $f^{-1}(V) = U$.
\end{proof}

\section{Covers of cubes with  nerves homeomorphic to $\mathbb{D}^d$}\label{section:covers}
    In this section we describe a construction which, given $\epsilon \in (0,1)$ and $L>0$, produces a cover $\mathcal{U}_\epsilon$ of the cube $C_L = [0,L]^d$ with the following properties:
\begin{itemize}
\item $\operatorname{mesh}(\mathcal{U}_\epsilon) < \epsilon L$,
\item $\mathcal{L}(\mathcal{U}_\epsilon) \ge\epsilon LM$, where $M$ depends only on $d$,
\item $|N(\mathcal{U}_\epsilon)|$ is homeomorphic to $\mathbb{D}^d$.
\end{itemize}
We first outline this construction for $L=1$ and then generalize it to arbitrary $L$  by simply dilating by $L$. 
\\
We begin with a lemma providing a triangulation of a cube of side length $\epsilon$ using only its vertices and with controlled simplex diameter.

\begin{lemma}\label{simplex}
Let $\epsilon > 0$, and let $C_\epsilon = [0,\epsilon]^d \subseteq \mathbb{R}^d$ be equipped with the $\ell_\infty$-metric. Then $C_\epsilon$ admits a triangulation whose vertex set consists exactly of the vertices of $C_\epsilon$. Consequently, every simplex in the triangulation has diameter $\epsilon$. Moreover, the induced triangulations of opposite codimension one faces agree under the natural translation.
\end{lemma}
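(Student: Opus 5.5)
We will use the standard Freudenthal (Kuhn) triangulation of the cube, obtained from the permutation-simplices of the unit cube and rescaled by $\epsilon$. By dilation it suffices to treat $[0,1]^d$. For each permutation $\pi$ of $\{1,\dots,d\}$ we set
\[
\Delta_\pi=\{x\in[0,1]^d \,:\, x_{\pi(1)}\ge x_{\pi(2)}\ge\dots\ge x_{\pi(d)}\}.
\]
This is the $d$-simplex with vertices $v_0=0$ and $v_j=e_{\pi(1)}+\dots+e_{\pi(j)}$ for $1\le j\le d$. All of its vertices are vertices of the cube.

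\textbf{The simplices form a triangulation.} Every point of $[0,1]^d$ lies in some $\Delta_\pi$, obtained by sorting its coordinates, so the simplices cover the cube. Any face of $\Delta_\pi$ is cut out by turning some of the defining inequalities (including $x_{\pi(1)}\le 1$ and $x_{\pi(d)}\ge 0$) into equalities. Hence $\Delta_\pi\cap\Delta_{\pi'}$ is the set of points whose coordinate ordering is compatible with both $\pi$ and $\pi'$. This set is exactly the face spanned by the common vertices $v_j$, namely the chains $\emptyset\subset S_1\subset\dots$ of coordinate sets shared by both permutations. It follows that the $\Delta_\pi$ intersect in common faces, so they form a geometric simplicial complex whose vertex set is $\{0,1\}^d$. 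This is the one step needing care; we will phrase it through the correspondence between simplices and chains of subsets $S_1\subset\dots\subset S_k$ of $\{1,\dots,d\}$, under which a point lies in the relative interior of the simplex of the chain of its superlevel sets.

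\textbf{Diameter.} Each simplex is the convex hull of cube vertices. In the $\ell_\infty$-metric any two distinct vertices of $[0,\epsilon]^d$ are at distance exactly $\epsilon$, and the diameter of a convex hull is attained at its vertices. So every simplex of dimension at least $1$ has diameter exactly $\epsilon$, and none exceeds $\epsilon$.

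\textbf{Opposite faces.} The restriction of the triangulation to the face $\{x_i=0\}$ consists of the simplices whose chains avoid index $i$, together with the origin. The restriction to $\{x_i=1\}$ consists of the chains in which $i$ appears first, meaning $i\in S_1$. In both cases the ordering of the remaining coordinates is what determines the simplex, so both restrictions are the Freudenthal triangulation of the $(d-1)$-cube in the remaining coordinates. Translation by $\epsilon e_i$ therefore carries one to the other. This compatibility is what will later allow the triangulations of adjacent small cubes to glue into a triangulation of $[0,L]^d$.
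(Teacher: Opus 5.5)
Your proof is correct. It builds the same triangulation as the paper, but by a different route.

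\textbf{The paper's route.} The paper argues by induction on $d$. It starts from the square cut by a diagonal, writes $C_\epsilon=C\times[0,\epsilon]$, and splits each prism $\sigma\times[0,\epsilon]$ into the $d$ staircase simplices $[v_0^{0},\dots,v_k^{0},v_k^{1},\dots,v_{d-1}^{1}]$. A single lexicographic order on the vertices of $C$ ensures that adjacent prisms are subdivided compatibly.

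\textbf{Why the triangulations coincide.} If $S\subsetneq S'$, then $\mathbf{1}_S$ precedes $\mathbf{1}_{S'}$ lexicographically. So, inductively, a staircase simplex over a base simplex with vertex chain $S_0\subset\dots\subset S_{d-1}$ has vertex set the maximal chain $S_0\subset\dots\subset S_k\subset S_k\cup\{d\}\subset\dots\subset S_{d-1}\cup\{d\}$. Taking the diagonal through the origin in the base case, the paper's induction produces exactly your simplices $\Delta_\pi$.

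\textbf{What your description buys.} Your closed-form description turns the verifications into one-liners. The intersection property reduces to intersecting two maximal chains. The opposite-face compatibility, which the paper asserts without argument, is just the bijection $S\mapsto S\cup\{i\}$. In the paper's setup it would instead follow from the induction hypothesis, together with the fact that translation by $\epsilon e_i$ preserves the lexicographic order on vertices with fixed $i$-th coordinate. The chain picture also makes the flag property of $T$ claimed in Step 1 easy to see, since pairwise comparable subsets form a chain.

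\textbf{What the paper's route buys.} The prism construction needs nothing beyond the standard subdivision of a prism over an ordered simplex. The cost is that these compatibility checks are left implicit.

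Your remark that only positive-dimensional simplices have diameter exactly $\epsilon$ is a correct sharpening of the statement.
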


\begin{proof}
We argue by induction on $d$. For $d=2$, the square $C_\epsilon$ can be decomposed into two triangles by adding a diagonal; each simplex has diameter $\epsilon$ in the $\ell_\infty$-metric.

Assume the statement holds in dimension $d-1$. Let $C = [0,\epsilon]^{d-1}$ be triangulated accordingly. Then $C_\epsilon = C \times [0,\epsilon]$. For each $(d-1)$-simplex $\sigma \subseteq C$, the prism $\sigma \times [0,\epsilon]$  can then be decomposed into $d$-simplices as follows. We first order the vertices of $C$ lexicographically. Let $\sigma=[v_0,v_1,\dots,v_{d-1}]$ be a simplex of $C$, with the vertices indexed so that $v_0<v_1<\cdots<v_{d-1}$. Then, $ \sigma \times \{0\}=[v_0^{0},v_1^{0},\dots ,v_{d-1}^{0}]$, where $v_{i}^{0}=v_{i}\times \{0\}, 1\leq i\leq d-1$. Also,  $ \sigma \times \{\epsilon\}=[v_0^{1},v_1^{1},\dots ,v_{d-1}^{1}]$ where $v_{i}^{1}=v_{i}\times \{1\}, 1\leq i\leq d-1$. 
    Then, 
    $ \sigma \times [0,\epsilon]$ is the union of simplices of the form $[v_0^{0},\dots,v_{k-1}^{0},v_k^{0},v_{k}^{1},v_{k+1}^{1},\dots,v_{d-1}^{1}]$ where $k$ ranges from $0$ to $d-1$ (cf. Figure \ref{fig:2}). These decompositions agree on adjacent prisms and give rise to a global triangulation of $C_{\epsilon}$. 
\end{proof}

\begin{figure}[h!]
    \centering
  \includegraphics[width=250pt]{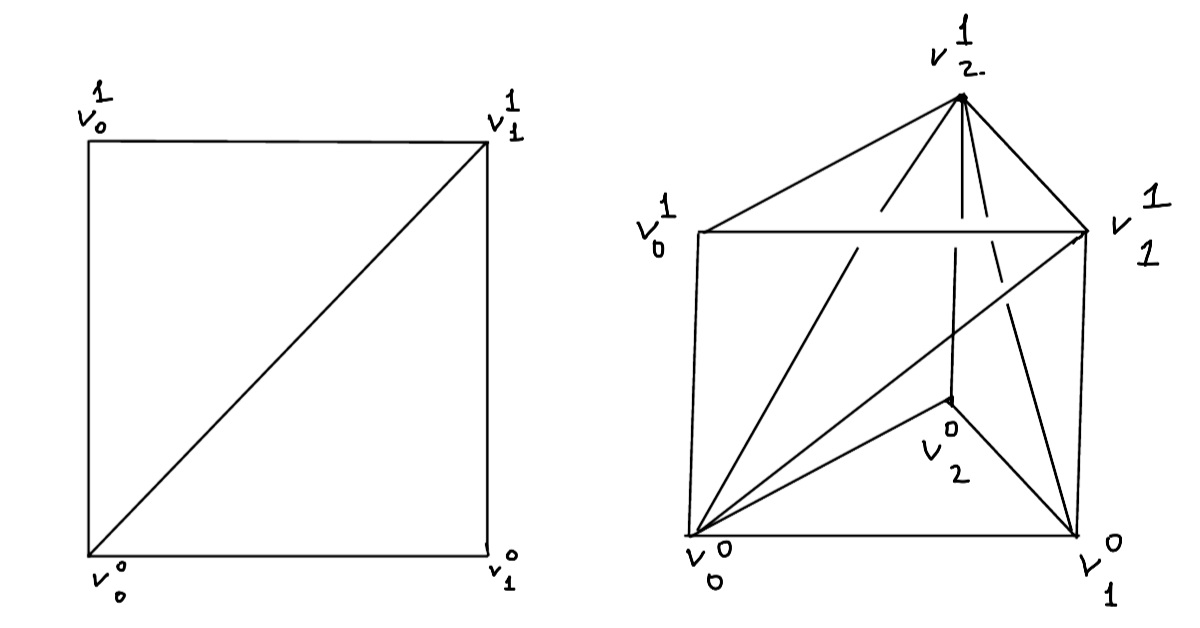}
            \caption{}
            \label{fig:2}
\end{figure}

We now define a constant $A = A(d) \in (0,1)$. Let $T$ be a triangulation of the unit cube $C_1 = [0,1]^d$ as above, so that every edge of $T$ is either an edge of the cube or a diagonal of a face. Let $Bd(T)$ denote its barycentric subdivision, and define
\begin{equation}
  B(T) = \min \{ d(\sigma,\sigma') \mid \sigma, \sigma' \in Bd(T),\ \sigma \cap \sigma' = \emptyset \}  
\end{equation}

 Set
\begin{equation}\label{constant}
   A(d) = \min_T B(T), 
\end{equation}

where the minimum is taken over all such triangulations $T$.
\medskip
\\
We are now ready to describe the construction for $L=1$.

\begin{center}
\textbf{The construction}
\end{center}

\begin{enumerate}
\item[\textbf{Step 1:}]
Given $\epsilon \in (0,1)$,
choose $n \in \mathbb{N}$ such that
$
\frac{\epsilon}{2} \le \frac{3}{n} < \epsilon.
$
Partition $[0,1]$ into $n$ equal intervals,
$
\mathcal{P} = \left\{ \left[\frac{k}{n}, \frac{k+1}{n}\right] \right\}_{k=0}^{n-1}.
$
Taking $d$-fold products yields a partition $\mathcal{Q}$ of $C_1$ into cubes:
$
\mathcal{Q} = \{ A_1 \times \cdots \times A_d \mid A_i \in \mathcal{P} \}.
$
Each element of $\mathcal{Q}$ has diameter $1/n$.
Next we triangulate each cube in $\mathcal{Q}$ as follows. 
Using Lemma~\ref{simplex}, triangulate the cube $[0,\frac{1}{n}]^d$. Obtain a triangulation of each of the cubes in $\mathcal{Q}$ by translating the triangulation of $[0,\frac{1}{n}]^d$ by an appropriate vector. This gives rise to a  triangulation of each cube in $\mathcal{Q}$ such that so that adjacent cubes induce compatible triangulations on their common faces. This yields a  triangulation $T$ of $C_1$ in which every simplex of positive dimension has diameter $1/n$. Moreover, $T$ is a flag complex. Figure \ref{fig3}
shows a triangulation of the square obtained for $n=3$. 

\begin{figure}[H]
    \centering
  \includegraphics[width=100pt]{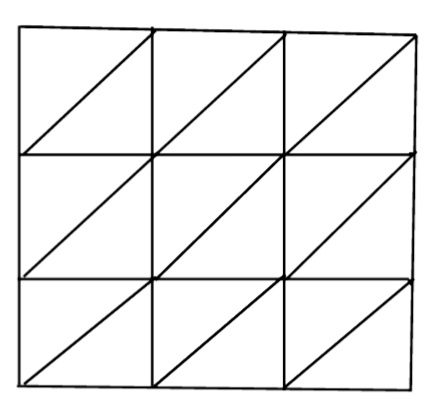}
  \caption{}
  \label{fig3}
\end{figure}

\item[\textbf{Step 2:}]
Let $Bd(T)$ denote the barycentric subdivision of $T$. For each vertex $p \in T^{(0)}$, define
$$
V_p = St_{Bd(T)}(p),
$$
and set
$$
\mathcal{V} = \{ V_p \}_{p \in T^{(0)}}.
$$
If $x \in V_p$, then $x$ lies in a simplex $\sigma$ of $T$ having $p$ as a vertex. Hence $d(x,p) \le \frac{1}{n}$, and therefore
$\operatorname{diam}(V_p) \le \frac{2}{n}.$ In Figure \ref{fig4} we show what $V_p$ looks like for a typical point $p$ in $T$ for the triangulation of the square obtained after taking $n=3$. 
\end{enumerate}
\begin{figure}[H]
    \centering
  \includegraphics[width=100pt]{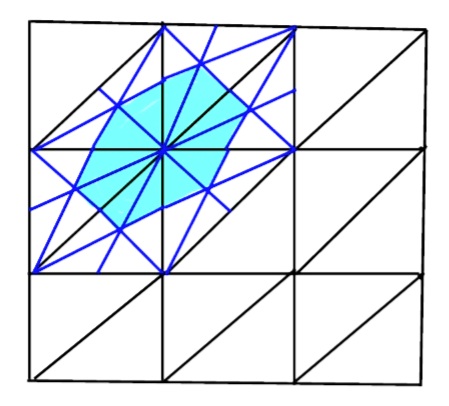}
          \caption{}
          \label{fig4}
\end{figure}
    Before giving the final step in the construction we make an important observation about the sets in $\mathcal{V}$. 

  \begin{lemma} \label{vpdist}
  There exists a constant $B=B(d)\in (0,1)$ such that for $p,q\in T^{(0)}$, either $V_p\cap V_{q}\neq \varnothing$ or $d(V_{p},V_{q})\geq\frac{B}{n}$.
\end{lemma}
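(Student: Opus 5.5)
The plan is to reduce the statement to the single constant $A(d)$ from \eqref{constant}, combined with a scaling argument. Every simplex of $T$ lies inside some cube $Q\in\mathcal{Q}$ of side $\frac1n$. The restriction of $T$ to $Q$ is a translate of a triangulation of $[0,\frac1n]^d$ produced by Lemma \ref{simplex}. Rescaling by $n$ turns it into one of the finitely many admissible triangulations $T'$ of the unit cube used to define $A(d)$. The same holds for $Bd(T)$, because barycentric subdivision commutes with translations and dilations. We will take $B=\min\{A(d),\tfrac12\}$, or a smaller positive constant if the argument below needs one. We must also check that $A(d)>0$. This holds because $Bd(T')$ is a finite complex, so only finitely many pairs of disjoint compact simplices occur, each at positive distance, and only finitely many $T'$ occur.

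Now let $p,q\in T^{(0)}$ with $V_p\cap V_q=\varnothing$. Pick $x\in V_p$ and $y\in V_q$ realizing (or approximating) $d(V_p,V_q)$. We split into two cases.

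\emph{Case 1: $d(x,y)\ge \frac1n$.} Then $d(x,y)\ge\frac Bn$ trivially, since $B<1$.

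\emph{Case 2: $d(x,y)<\frac1n$.} Here $x$ and $y$ are close in the $\ell_\infty$ metric, so they lie in cubes $Q,Q'\in\mathcal{Q}$ that share at least a corner point. We want to bring both points into one cube, so that we can apply $A(d)$ there. Write $x\in\sigma$ and $y\in\sigma'$, where $\sigma,\sigma'$ are simplices of $Bd(T)$ with $\sigma\subseteq V_p$ and $\sigma'\subseteq V_q$. Since $V_p$ and $V_q$ are disjoint, $\sigma\cap\sigma'=\emptyset$. The segment $[x,y]$ passes through a sequence of cubes of $\mathcal{Q}$.

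The main obstacle is to pass from the within-cube bound $A(d)/n$ to a global bound. Points in different cubes can be close without any single cube of $\mathcal{Q}$ containing both of them, and the simplex of $Bd(T)$ meeting $V_p$ in one cube may differ from the one in the adjacent cube. My first attempt is as follows. Because $V_p$ and $V_q$ are closed stars, hence unions of whole simplices of $Bd(T)$, their intersections with the common face $F=Q\cap Q'$ are subcomplexes of the induced subdivision of $F$. If $[x,y]$ crosses $F$ at a point $z$, then $z$ lies in some simplex $\tau$ of $Bd(T)\cap Q$. Since $d(x,z)\le d(x,y)$, I would like to compare both $x$ and $y$ with simplices inside $Q$.

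A cleaner alternative is to enlarge the reference configuration. The union of the $3^d$ cubes surrounding a vertex is again a cube, of side $\frac3n$ (this is presumably why Step 1 uses $\frac3n$). Its triangulation, rescaled by $n$, is one of finitely many fixed finite complexes. So we would define $A'(d)$ as the minimum distance between disjoint simplices of the barycentric subdivisions of these finitely many $3\times\cdots\times3$ configurations, and set $B=\min\{A'(d),\tfrac12\}$. Any $x,y$ with $d(x,y)<\frac1n$ lie in a common such block of $3^d$ cubes. Disjoint simplices $\sigma,\sigma'$ of $Bd(T)$ inside this block then satisfy $d(\sigma,\sigma')\ge A'(d)/n$ by scaling, so $d(V_p,V_q)\ge\frac Bn$ in this case too. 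I expect the only delicate point to be justifying that $\sigma$ and $\sigma'$ can be chosen inside the block with $V_p$ and $V_q$ still disjoint there. This is immediate because we only use the fact that the chosen simplices themselves are disjoint.
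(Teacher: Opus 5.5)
Your argument is correct, and it is organized differently from the paper's.

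\textbf{The paper's route.} The paper splits according to the position of the vertices $p,q$. Adjacent vertices have intersecting stars. If $d(p,q)\geq\frac2n$, it uses only the radius bound $V_p\subseteq B(p,\frac{d}{(d+1)n})$ from Lemma~\ref{lemma:bddiameter}. If $d(p,q)=\frac1n$, it applies $A(d)$ inside a common cube $C_{p,q}$ and uses the radius bound for the parts of the stars outside $C_{p,q}$. This gives the explicit constant $B=\min\{A(d),\frac{1}{d+1}\}$.

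\textbf{Your route.} You split instead according to $d(x,y)$ for $x\in V_p$, $y\in V_q$. You replace the one-cube constant $A(d)$ by a compactness constant $A'(d)$ for a $3\times\cdots\times 3$ block of cubes. This needs neither the radius estimate nor a case analysis on $p,q$, at the cost of a less explicit constant.

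It also avoids a weak spot in the paper's middle case. For $d\geq 3$, two non-adjacent vertices at distance $\frac1n$ can lie in two common cubes. Then $V_q\setminus C_{p,q}$ contains barycenters of $d$-simplices of the other common cube that contain $q$. These barycenters lie within $\frac{d}{(d+1)n}<\frac1n$ of $p$, so the paper's asserted bound $d(p,V_q\setminus C_{p,q})\geq\frac1n$ fails there. Your block argument handles all such configurations uniformly.

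\textbf{Points to tighten.}
\begin{itemize}
\item The step you flag as delicate is settled by taking $\sigma$ and $\sigma'$ to be the carriers of $x$ and $y$ in $Bd(T)$. The block is a subcomplex containing $x$ and $y$, so the carriers lie in it. Each carrier is a face of a simplex of $Bd(T)$ containing $p$ (respectively $q$), so it lies in $V_p$ (respectively $V_q$).
\item The block must be chosen inside $C_1$. This is possible because $\frac3n<\epsilon<1$ forces $n\geq 4$. Also $|x_i-y_i|<\frac1n$ places $x_i,y_i$ in two consecutive intervals of $\mathcal{P}$, so even a $2\times\cdots\times 2$ block would suffice.
\item The $3^d$ cubes in your block surround a cube, not a vertex.
\end{itemize}
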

    \begin{proof}
    Suppose $p,q$ are adjacent in $T^{(1)}$ then $V_{p}$ intersects $V_{q}$. Now suppose $p$ and $q$ are not adjacent in $T^{(1)}$. We divide the argument into two cases: 1) $d(p,q)= \frac{1}{n}$, and  2) $d(p,q)\geq \frac{2}{n}$. 
    The first case corresponds to points $p,q$ that are vertices of a cube $C_{p,q}\in \mathcal{Q}$ but are not adjacent in $T^{(1)}$. We deal separately with $ V_{q}\cap C_{p,q}$ and $V_{p}\setminus C_{p,q}$. Note that $d(V_{p},V_{q}\cap C_{p,q})\geq \frac{A}{n}$, where $A$ is the constant as defined in \ref{constant}. Also note that $d(p,V_{q}\setminus  C_{p,q})\geq \frac{1}{n}$. For each $p\in T^{(0)}$, $V_{p}\subseteq B(p, \frac{d}{(d+1)n})$ (Lemma \ref{lemma:bddiameter}). Therefore $d(V_{p},V_{q}\setminus  C_{p,q})\geq\frac{1}{n}-\frac{d}{(d+1)n}=\frac{1}{(d+1)n}$
    The second case is easier to deal with. For each $p\in T^{(0)}$, $V_{p}\subseteq B(p, \frac{d}{(d+1)n})$. If $x\in V_{p},y\in V_{q}$, then $d(x,y)\geq d(p,q)-d(p,x)-d(q,y)\geq \frac{2}{n}-\frac{2d}{(d+1)n}=\frac{2}{(d+1)n}$. 
    To summarize, if $B=\textrm{min}\{A(d),\frac{1}{(d+1)}\}$ then for $p,q\in T^{(0)}$, either $V_p\cap V_{q}\neq \varnothing$ or $d(V_{p},V_{q})\geq\frac{B}{n}$.
    \end{proof}
    
    \begin{enumerate}
        \item [\textbf{Step 3:}] For each $p\in T^{(0)}$ define $U_p =N_{\frac{2B}{5n}}(V_{p})$. Define $\mathcal{U}_{\epsilon}$ to be the cover $ \mathcal{U}_{\epsilon}=\{U_p| p\in T^{(0)}\}$.
    \end{enumerate}
   This completes the construction. Now we collect some properties of the the covers $\mathcal{U}_\epsilon$.

     \begin{lemma}\label{lemone}
    Let $M=\frac{B}{3}$. Then, $\operatorname{mesh}(\mathcal{U}_{\epsilon})<\epsilon $
        and $\mathcal{L}(\mathcal{U}_\epsilon)\geq{\epsilon}{M}.$        
    \end{lemma}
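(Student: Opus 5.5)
The plan is to bound the mesh and the Lebesgue number of $\mathcal{U}_\epsilon$ separately, using only the diameter estimates from Step 2, the separation constant of Lemma \ref{vpdist}, and the choice $\frac{\epsilon}{2}\le\frac{3}{n}<\epsilon$ from Step 1.

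\emph{Mesh.} Each $V_p$ lies in the union of simplices of $T$ containing $p$. Every such point is within $\ell_\infty$-distance $\frac1n$ of $p$, so $\operatorname{diam}(V_p)\le\frac2n$. Enlarging by $\frac{2B}{5n}$ adds at most $\frac{4B}{5n}$ to the diameter, giving
\[
\operatorname{diam}(U_p)\le \frac{2}{n}+\frac{4B}{5n}<\frac{3}{n}<\epsilon,
\]
since $B<1$ forces $\frac{4B}{5}<1$. This gives $\operatorname{mesh}(\mathcal{U}_\epsilon)<\epsilon$.

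\emph{Lebesgue number.} Let $S\subseteq C_1$ with $\operatorname{diam}(S)\le \epsilon M=\frac{\epsilon B}{3}$. Since $\epsilon\le \frac{6}{n}$, this gives $\operatorname{diam}(S)\le\frac{2B}{n}$. I would aim to show that $S$ is contained in a single $U_p$, i.e. that some $V_p$ comes within distance $\frac{2B}{5n}$ of every point of $S$.

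The tentative argument runs as follows. Fix $x\in S$; because the $V_p$ cover $C_1$, we have $x\in V_p$ for some $p$. It then suffices that $S\subseteq N_{2B/(5n)}(V_p)$. Here the key point is the inequality $\operatorname{diam}(S)\le \frac{2B}{n}$, which is too weak as stated. I therefore expect that one has to exploit the structure of the cover more carefully. One option is to pick $p$ so that $x$ lies deep inside $V_p$: every point $x$ lies in a simplex of $Bd(T)$ containing some vertex $p$ of $T$, and I would try to show that $d(x,\partial V_p)$ is at least a fixed fraction of $\frac1n$ for a suitable choice of $p$. Alternatively, one can use the flag and separation structure of Lemma \ref{vpdist}. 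The sets $V_q$ meeting the ball $B(x,\frac{B}{n})$ pairwise intersect, since otherwise they would be $\frac{B}{n}$-apart. They therefore correspond to a clique of $T^{(1)}$, hence to a simplex of $T$, and their union contains a neighbourhood of $x$ of size comparable to $\frac{B}{n}$. Combining this with the thickening by $\frac{2B}{5n}$, one vertex $p$ of that simplex should have $U_p\supseteq S$.

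The main obstacle is the Lebesgue number bound, specifically matching the constants $\frac{2B}{5n}$ and $\frac{\epsilon B}{3}$. If the direct estimate fails, I would first redo the bound with the sharper inequality $\epsilon\le\frac6n$ and a covering-number argument. The cover $\{V_p\}$ has Lebesgue number $\ge cB/n$ for some $c$ depending only on the separation in Lemma \ref{vpdist}, and the $\frac{2B}{5n}$-thickening only increases the Lebesgue number. Finally, I would adjust $M$ by a dimensional constant if needed; this does not affect the later use of Lemma \ref{lemone}, since $M$ need only depend on $d$.
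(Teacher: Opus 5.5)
\textbf{Mesh.} Your mesh estimate is correct, and slightly more careful than the paper's, which adds only one thickening radius.

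\textbf{Lebesgue number.} Here there is a genuine gap, and it cannot be closed. With $M=\frac{B}{3}$ the inequality $\mathcal{L}(\mathcal{U}_\epsilon)\ge\epsilon M$ is false. Take $d=1$, which is in scope since it is the base case of Lemma~\ref{lemthree}. Then $B=A(1)=\frac12$, $V_p=[p-\frac{1}{2n},p+\frac{1}{2n}]\cap[0,1]$ and $U_p=[p-\frac{7}{10n},p+\frac{7}{10n}]\cap[0,1]$. Consecutive $U_p$ overlap in intervals of length $\frac{2}{5n}$ and non-consecutive ones are disjoint, so $\mathcal{L}(\mathcal{U}_\epsilon)=\frac{2}{5n}$. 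On the other hand $\epsilon M=\frac{\epsilon}{6}>\frac{1}{2n}$, because $\frac3n<\epsilon$.

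The paper's own one-line proof has the same slip. It shows $B(x,\frac{2B}{6n})\subseteq U_p$, i.e.\ $\mathcal{L}(\mathcal{U}_\epsilon)\ge\frac{B}{3n}$, and then asserts this is $\ge\frac{\epsilon B}{3}$. That would need $\frac1n\ge\epsilon$, whereas Step~1 gives $\frac1n<\frac{\epsilon}{3}$. What the argument actually yields is $\mathcal{L}(\mathcal{U}_\epsilon)\ge\frac{B}{3n}\ge\frac{\epsilon B}{18}$. Your instinct that $\operatorname{diam}(S)\le\frac{2B}{n}$ is too weak was therefore right.

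\textbf{Your fallback.} Changing $M$ by a dimensional factor is the right resolution, but the routes you sketch toward it contain false steps.
\begin{itemize}
\item Two sets meeting $B(x,\frac{B}{n})$ can be up to $\frac{2B}{n}$ apart. So Lemma~\ref{vpdist} does not force them to intersect; you would need radius $<\frac{B}{2n}$. Moreover, the hoped-for ``some vertex $p$ of that simplex has $U_p\supseteq S$'' at scale $\frac{2B}{n}$ is exactly what the example above refutes.
\item The cover $\{V_p\}$ does not have Lebesgue number $\ge cB/n$. No simplex of $Bd(T)$ contains two vertices of $T$, so distinct $V_p$ have disjoint interiors. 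A short segment crossing a common face therefore lies in no single $V_p$, and the Lebesgue number of $\{V_p\}$ is $0$. All of the Lebesgue number comes from the thickening.
\end{itemize}

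\textbf{The correct argument.} It is the one you started and abandoned. If $\operatorname{diam}(S)\le\frac{2B}{5n}$ and $x\in S\cap V_p$, then $S\subseteq N_{2B/(5n)}(V_p)=U_p$. Hence $\mathcal{L}(\mathcal{U}_\epsilon)\ge\frac{2B}{5n}\ge\frac{\epsilon B}{15}$, using $\epsilon\le\frac6n$. So the lemma holds with $M=\frac{B}{15}$. This change is harmless downstream: later arguments use only that $M=M(d)\in(0,1)$, and that $\epsilon_1<\epsilon_2M$ implies $\mathcal{U}_{\epsilon_1}\preccurlyeq\mathcal{U}_{\epsilon_2}$, which follows from $\operatorname{mesh}<\epsilon_1<\epsilon_2M\le\mathcal{L}$.
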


    \begin{proof}

         For any $U\in \mathcal{U_{\epsilon}}$, $\operatorname{mesh}(U)\leq \frac{2}{n}+\frac{2B}{5n}\leq \frac{3}{n}<\epsilon$. 
        Let $x\in V_p$ for some $p\in T^{(0)}$. Then, $B(x,\frac{2B}{6n})\subseteq N_{\frac{2B}{5n}}(V_p)$. This shows that $\mathcal{L}(\mathcal{U}_\epsilon)\geq {\epsilon}{M}$.
    \end{proof}

    \begin{lemma}\label{lemtwo}
The geometric realization $|N(\mathcal{U}_{\epsilon})|$ of $N(\mathcal{U}_{\epsilon})$ is homeomorphic to $\mathbb{D}^{d}$.         
    \end{lemma}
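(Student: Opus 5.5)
The plan is to show that $N(\mathcal{U}_\epsilon)$ is isomorphic, as a simplicial complex, to the triangulation $T$ of $C_1$ constructed in Step 1. Since $|T|=C_1\cong\mathbb{D}^d$, this gives the claim. The natural candidate isomorphism is the bijection of vertex sets $p\mapsto U_p$ from $T^{(0)}$ to $\mathcal{U}_\epsilon$. We must check that a set $\{p_0,\dots,p_k\}\subseteq T^{(0)}$ spans a simplex of $T$ if and only if $U_{p_0}\cap\dots\cap U_{p_k}\neq\varnothing$. Note that $p\mapsto U_p$ is injective, since $p\in V_p$ and $p\notin V_q$ for $q\neq p$.

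\emph{Forward direction.} Suppose $\sigma=[p_0,\dots,p_k]\in T$. Its barycenter $b_\sigma$ is a vertex of $Bd(T)$, and each edge $[p_i,b_\sigma]$ lies in $Bd(T)$. Hence $b_\sigma\in\operatorname{St}_{Bd(T)}(p_i)=V_{p_i}\subseteq U_{p_i}$ for every $i$, so the intersection is nonempty.

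\emph{Converse.} This is the main step. First I would treat the sets $V_p$ directly. A point $x\in V_{p_0}\cap\dots\cap V_{p_k}$ lies in some simplex $\tau_i$ of $Bd(T)$ containing $p_i$. The cleaner route is to use the identity
\[
\bigcap_i \operatorname{St}_{Bd(T)}(p_i)\neq\varnothing \iff \{p_i\} \text{ spans a simplex of } T.
\]
To see this, take $x$ in the intersection and let $\rho$ be the carrier of $x$ in $T$, that is, the unique simplex of $T$ containing $x$ in its relative interior. A simplex of $Bd(T)$ containing $x$ and $p_i$ corresponds to a chain of faces $\{p_i\}\subseteq\dots$ whose top face $\rho'$ contains the carrier of $x$. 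Working with closed stars, one can then show that $\{p_i\}\cup\rho$ lies in a common simplex of $T$: each $p_i$ is a vertex of some simplex of $T$ containing $\rho$, and one needs these to fit together. Here I would use the flagness of $T$ stated in Step 1. Pairwise, $V_{p_i}\cap V_{p_j}\neq\varnothing$ forces $p_i,p_j$ to be adjacent in $T$; this is exactly the dichotomy of Lemma~\ref{vpdist}, since nonadjacent vertices have distance $\ge B/n>0$ between their stars. Flagness then upgrades pairwise adjacency to a simplex.

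\emph{Passing to the $U_p$.} The key estimate is again Lemma~\ref{vpdist}. If $U_{p_i}\cap U_{p_j}\neq\varnothing$, then
\[
d(V_{p_i},V_{p_j})\le \tfrac{4B}{5n}<\tfrac{B}{n},
\]
so $V_{p_i}\cap V_{p_j}\neq\varnothing$, and hence $p_i,p_j$ are adjacent in $T$. Therefore if $\bigcap U_{p_i}\neq\varnothing$, all the $p_i$ are pairwise adjacent, and by flagness of $T$ they span a simplex. This avoids needing the higher intersection analysis of the $V_p$ at all.

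So the concrete plan is:
\begin{enumerate}
\item Verify the forward direction via barycenters.
\item Show that pairwise intersection of the $U_p$ implies adjacency, using Lemma~\ref{vpdist} and the neighborhood radius $\frac{2B}{5n}$.
\item Conclude with flagness of $T$, so that $N(\mathcal{U}_\epsilon)\cong T$ and $|N(\mathcal{U}_\epsilon)|\cong C_1\cong\mathbb{D}^d$.
\end{enumerate}

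The main obstacle is justifying that $T$ is a flag complex. The paper asserts this but it needs checking. For the staircase (Freudenthal) triangulation built in Lemma~\ref{simplex}, where each cube is split by the lexicographic ordering consistently across cubes, two vertices are adjacent iff their difference is a $0/1$ vector up to sign. A clique is then a set whose coordinatewise differences form a chain under the product order, and such a chain is exactly a simplex. I would verify this description for the translated triangulation, noting that consistency of the triangulation across adjacent cubes is what makes the global adjacency rule well defined. A secondary point is the edge case in Lemma~\ref{vpdist} where vertices of one cube are non-adjacent in $T$; there I would rely on the constant $A(d)$ exactly as that proof does.
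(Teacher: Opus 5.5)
Your proposal is correct and is essentially the paper's proof. The paper also shows that $p\mapsto U_p$ is a simplicial isomorphism $T\to N(\mathcal{U}_\epsilon)$, in two steps: first $N(\mathcal V)\cong T$ (a common vertex of $Bd(T)$ in $\bigcap V_{p_i}$ is a barycenter of a simplex containing all the $p_i$), then Lemma~\ref{vpdist} with the radius $\frac{2B}{5n}$, together with flagness, upgrades pairwise intersections of the $U_p$ to a simplex; this is your steps (1)--(3) merged. Your extra care about flagness of $T$, which the paper only asserts, is well placed. One small fix: $p\notin V_q$ does not by itself give $U_p\neq U_q$. That follows instead from $d(p,V_q)\ge\frac{1}{(d+1)n}>\frac{2B}{5n}$, which holds because $q$ carries the largest $T$-barycentric coordinate at every point of $V_q$.
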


\begin{proof}
 It suffices to show that $N(\mathcal{U}_\epsilon)$ is simplicially isomorphic to $T$ as $ T$ is a triangulation of $C_1$ which is itself homeomorphic to $\mathbb{D}^d$.  We first show that $N(\mathcal{V})$ is isomorphic to $T$. 
 
\vspace{5mm}
\textit{Claim 1}:
    $N(\mathcal{V})$ is isomorphic to $T$.
 
\vspace{5mm}
\textit{Proof of Claim 1:}
    We show that the map that sends $p$ to $V_p$ defines a simplicial isomorphism between the two complexes. Suppose $[p_0,p_1,\dots , p_k]\in T$ then $V_{p_1}\cap V_{p_2}\cap \dots \cap V_{p_k}\neq \emptyset$. Consequently, $[V_{p_0},V_{p_1},\dots ,V_{p_k}]\in N(\mathcal{V})$. 
    Now let $V_{p_{0}},V_{p_1},\dots , V_{p_k}$ be elements of $\mathcal{V}$ such that the $V_{p_0}\cap V_{p_1}\cap \dots \cap V_{p_k}\neq \emptyset$. As $V_{p_i}$'s are subcomplexes of $Bd(T)$ their intersection is also a subcomplex. Let $b$ be a vertex of $Bd(T)$ such that $b\in V_{p_1}\cap V_{p_2}\cap \dots \cap V_{p_k}$. By definition, $b$ is a barycenter of some simplex in $T$ i.e., $b=\frac{1}{l+1}(v_0+v_1+\dots +v_l) $ for some simplex $[v_0,v_1,\dots , v_l]\in T$. For each $i$, since $b\in \operatorname{St}(p_i)$ it must be that $p_i$ is equal to one of the $v_j$'s. It follows that $p_i$'s span a simplex in $T$, indeed it is a face of the simplex $[v_0,v_1,\dots , v_l]$.  
\qedsymbol
 
\vspace{5mm}
\textit{Claim 2:}
    $N(\mathcal{U}_\epsilon)$ is isomorphic to $N(\mathcal{V})$.
 
\vspace{5mm}
\textit{Proof of Claim 2:}
We show that the map that sends $V_p$ to $U_p$ is a an isomorphism. Suppose $ N_\frac{2B}{5n}(V_{p_0})\cap N_\frac{2B}{5n}(V_{p_1})\cap\dots \cap N_\frac{2B}{5n}(V_{p_k}) \neq \emptyset$. Then, $N_\frac{2B}{5n}(V_{p_i})\cap N_\frac{2B}{5n}(V_{p_j})\neq \emptyset$ for all $ 0\leq i \leq j \leq k$. This implies that $d(V_{p_i},V_{p_j})\leq \frac{4B}{5n}$. It follows from Lemma \ref{vpdist} that for each $ 0\leq i \leq j \leq k$,  $V_{p_i}\cap V_{p_j}\neq \emptyset$. As a result $(V_{p_i},V_{p_j})$ is an edge in $N(\mathcal{V})$ for all $ 0\leq i< j \leq k$. As $N(\mathcal{V})$ is a flag complex (it is isomorphic to $T$) the $V_{p_i}$'s span a simplex in $N(\mathcal{V})$. 
\qedsymbol
    \end{proof}

\begin{remark}\label{remark:point}
For each simplex $\sigma\in [U_{p_{0}},\dots ,U_{p_{t}}]$ let $z_{\sigma}$ denote the barycenter of the simplex $ [p_0,\dots,p_t]\in T$. Then the point $z_\sigma\in \cap_{j=0}^{t}U_{p_{j}}$. Furthermore for each codimension one face $\sigma'=[U_{p_{0}},\dots \hat{U}_{p_{k}}\dots,U_{p_{t}}]$ of $\sigma$ the straight line joining $z_{\sigma'}$ and $z_\sigma$ lies entirely in the set $ U_{p_{0}}\cap\dots \cap\hat{U}_{p_{k}}\cap\dots\cap U_{p_{t}}$. 
\end{remark}

\begin{lemma}
    Let $ \epsilon_1,\epsilon_2\in (0,1)$ be such that $\epsilon_1 < {\epsilon_2}{M}$, so that $\mathcal{U}_{\epsilon_{1}}\preccurlyeq \mathcal{\mathcal{U}}_{\epsilon_{2}}$. Then, for any refinement map $f:N(\mathcal{U}_{\epsilon_{1}})\rightarrow N(\mathcal{U}_{\epsilon_{2}})$, $f(\partial N(\mathcal{U}_{\epsilon_{1}}))\subseteq \partial N(\mathcal{U}_{\epsilon_{2}})$ for all $d$.
\end{lemma}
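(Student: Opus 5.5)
The plan is to transport everything to the triangulation $T$ by the simplicial isomorphism $p\mapsto V_p\mapsto U_p$ of Lemma~\ref{lemtwo}. Write $T_1,T_2$ for the triangulations of $C_1$ used to build $\mathcal{U}_{\epsilon_1},\mathcal{U}_{\epsilon_2}$, with parameters $n_1,n_2$. Then $\partial N(\mathcal{U}_{\epsilon_j})$ is the subcomplex corresponding to the simplices of $T_j$ contained in $\partial C_1$. Such a simplex is convex and lies in the boundary of the cube, so it lies in a single facet $F=\{x_i=0\}$ or $F=\{x_i=1\}$.

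Conversely, suppose all vertices of a simplex of $T_j$ lie in a facet $F$. The simplex sits inside one small cube of $\mathcal{Q}$, so by convexity it lies in $F$. Hence the simplices of $T_j$ lying in $F$ form a full subcomplex, namely the induced triangulation of $F$. It therefore suffices to prove the following. Let $\sigma=[U_{p_0},\dots,U_{p_k}]$ with all $p_l\in F$, and let $f(U_{p_l})=U_{q_l}$. Then every $q_l$ lies in $F$. Since $f$ is simplicial, $[q_0,\dots,q_k]$ is a simplex of $T_2$, so by fullness it lies in $F\subseteq\partial C_1$.

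The key step is a separation estimate. Take $F=\{x_i=0\}$; the other case is symmetric. Let $q\in T_2^{(0)}$ with $q\notin F$, so $q_i\ge 1/n_2$. By Lemma~\ref{lemma:bddiameter}, $V_q\subseteq B\bigl(q,\frac{d}{(d+1)n_2}\bigr)$, hence every point of $V_q$ has $i$-th coordinate at least $\frac{1}{(d+1)n_2}$. The sets $U_q$ are $\frac{2B}{5n_2}$-neighborhoods, taken inside $C_1$ since $\mathcal{U}_{\epsilon}$ covers the cube. So every point of $U_q$ has $i$-th coordinate at least
\[
\frac{1}{(d+1)n_2}-\frac{2B}{5n_2}>0,
\]
using $B\le \frac{1}{d+1}$ from Lemma~\ref{vpdist}. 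Thus $U_q\cap F=\varnothing$.

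Now if $p\in F$ and $f(U_p)=U_q$, then $p\in V_p\subseteq U_p\subseteq U_q$. Hence $U_q\cap F\neq\varnothing$, which forces $q\in F$. This gives the claim for every vertex, and by the previous paragraph for every simplex of $\partial N(\mathcal{U}_{\epsilon_1})$.

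The main obstacle is the identification of $\partial N(\mathcal{U}_\epsilon)$ with the boundary subcomplex of $T$. One must be careful that simplices whose vertices all lie on $\partial C_1$ but in different facets, such as a face diagonal cutting a corner, are not counted as boundary. This is why the argument runs facet by facet and uses fullness of $T|_F$ rather than the naive condition "all vertices on $\partial C_1$". The metric estimate itself is routine once the constants $B\le 1/(d+1)$ and the neighborhood radius $2B/(5n)$ are compared.
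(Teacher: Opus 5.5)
Your proof is correct and follows the paper's argument. The vertices are handled the same way: if $q$ is at distance at least $1/n_2$ from the boundary, then every point of $U_q$ stays at positive distance from it (Lemma~\ref{lemma:bddiameter} together with $B\le\frac{1}{d+1}$), and this is combined with $p\in U_p\subseteq U_q$. For higher simplices the paper only asserts that $[U_0,\dots,U_k]$ lies in $\partial N(\mathcal{U}_{\epsilon})$ if and only if $\bigcap_i U_i$ meets $\partial C_1$, and then uses $\bigcap_i U_i\subseteq\bigcap_i f(U_i)$. Your facet-by-facet estimate plus the fullness of the induced triangulation of a facet $F$ is exactly what justifies that criterion. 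It also correctly handles the corner-cutting diagonals, for which the naive test ``all vertices on $\partial C_1$'' fails.
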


\begin{proof}
     Let $n\in \mathbb{N}$ such that $\frac{\epsilon_2}{2}\leq \frac{3}{n} < \epsilon_2$. Let $T$ denote the triangulation of $C_1$ obtained while performing step 1. of the construction given above for $\epsilon=\epsilon_1$. Similarly, let $S$ denote the triangulation of $C_1$ for $\epsilon=\epsilon_2$. 
     Suppose $U_p\in \partial N(\mathcal{U}_{\epsilon_{1}})^{(0)}$ and let $f(U_p)= V\in \mathcal{U}_{\epsilon_{2}}$. This implies that $ U_p \subseteq V$. Using the fact that $T$ is isomorphic to $N(\mathcal{U}_{\epsilon_{1}})$ via the map $p\rightarrow U_p$ we get that  $ p\in \partial C_1$. Let $q\in S$ such that $V= N_{\frac{2B}{5n}}(St_{Bd(T)}(q))$.  Assume for the sake of contradiction that $V$ is a vertex in the interior of  $N(\mathcal{U}_{\epsilon_{2}})$.  This is equivalent to assuming that $q$ belongs to the interior of $C_1$. It follows that $d(q, \partial C_1)\geq\frac{1}{n}$. 
     Recall that the diameter of any simplex in the barycentric subdivision $Bd(S)$ is at most $ \frac{d}{(d+1)n}$. As a result, for any $ x\in V=N_{\frac{2B}{5n}}(\operatorname{St}(q))$, $d(q,x)< \frac{d}{(d+1)n} + \frac{2}{5(d+1)n}$. 
     For any $ x\in V$, 
     $$ d(x,\partial C_1)\geq d(q, \partial C_1)-d(q,x)\geq \frac{1}{n} - \frac{d}{(d+1)n} - \frac{2}{5(d+1)n}>0.$$
     $d(V,\partial C_1)>0$. This contradicts the fact that $p\in V \cap \partial C_1$. 
     \par
     Now let $[U_0,U_1,\dots , U_k]$ be a $k$-simplex in $ N(\mathcal{U}_{\epsilon_1})$. It follows via the isomorphism stated in the Claim 1. of the proof of Lemma \ref{lemtwo} is a simplex in the boundary $\partial N(\mathcal{U}_{\epsilon_1})$ if and only if the set $ \cap_{i=0}^{k} U_i$ intersects $\partial C_1$ nontrivially. For $1\leq i\leq k$, let $V_i=p(U_i)$. As $ U_i\subset V_i$ it follows that $\cap_{i=0}^{n} V_i$ intersects $C_1$ nontrivially. As a result, $[V_0,V_1,\dots , V_k]$ is contained in the boundary $\partial N(\mathcal{U}_{\epsilon_2})$. 
 \end{proof}
We prove another useful lemma which characterizes simplices that are contained in the boundary $\partial(N(\mathcal{U}_{\epsilon}))$.  

\begin{lemma}\label{lem:boundary}
  Let $[U_{0},U_{1},\dots,U_{t}]$ be a simplex of $N(\mathcal{U}_{\epsilon})$ such that $d(\cap_{i=0}^{k}U_i,\partial C_1)< \frac{\epsilon}{(d+1)10}$. Then, $[U_{0},U_{1},\dots,U_{t}]$ lies in $\partial N(\mathcal{U}_{\epsilon})$.
\end{lemma}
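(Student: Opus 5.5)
The plan is to transfer the question to the triangulation $T$ of $C_1$ via the isomorphism $p\mapsto U_p$ from Claims 1 and 2 of Lemma~\ref{lemtwo}. Under it, $[U_{p_0},\dots,U_{p_t}]$ is a simplex of $N(\mathcal{U}_\epsilon)$ exactly when $[p_0,\dots,p_t]\in T$. Since $T$ triangulates the cube $C_1$, the simplex lies in $\partial N(\mathcal{U}_\epsilon)$ exactly when every vertex $p_i$ lies in $\partial C_1$ and they all lie in a common face of $C_1$. More precisely, $[p_0,\dots,p_t]$ should lie in $\partial C_1$; because $T$ is built from a cubical grid, this amounts to a single coordinate $j$ such that all $p_i$ have $j$-th coordinate equal to $0$, or all equal to $1$. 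So I must show that if $\bigcap_i U_{p_i}$ comes within $\frac{\epsilon}{10(d+1)}$ of $\partial C_1$, then the $p_i$ share a boundary coordinate hyperplane.

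Pick $x\in\bigcap_i U_{p_i}$ with $d(x,\partial C_1)<\frac{\epsilon}{10(d+1)}$. Then there is a coordinate $j$ and a value $c\in\{0,1\}$, say $c=0$, with $x_j<\frac{\epsilon}{10(d+1)}$. Now use $\frac{3}{n}<\epsilon$, so $\frac1n>\frac{\epsilon}{6}$ but also $\frac{1}{n}\ge\frac{\epsilon}{6}$ only from below. We need a bound the other way: $\frac{\epsilon}{2}\le\frac3n$ gives $\frac{\epsilon}{10(d+1)}\le \frac{3}{5(d+1)n}$. Each $x\in U_{p}$ satisfies $d(x,p)\le \frac{d}{(d+1)n}+\frac{2B}{5n}$, using Lemma~\ref{lemma:bddiameter} as in the proof of the preceding lemma, together with $B\le\frac1{d+1}$. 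Therefore
\[
(p_i)_j\le x_j+\frac{d}{(d+1)n}+\frac{2}{5(d+1)n}<\frac{3}{5(d+1)n}+\frac{d}{(d+1)n}+\frac{2}{5(d+1)n}=\frac{1}{n}.
\]
Since $(p_i)_j$ is a grid value $k/n$, this forces $(p_i)_j=0$ for every $i$. Hence all vertices lie in the face $\{x_j=0\}$ of $C_1$, and so does the simplex $[p_0,\dots,p_t]$ by convexity. Consequently $[U_{p_0},\dots,U_{p_t}]\subseteq\partial N(\mathcal{U}_\epsilon)$.

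The main obstacle is the bookkeeping of constants: the threshold $\frac{\epsilon}{10(d+1)}$ must combine with the radius bound on $U_p$ to stay strictly below one grid step $\frac1n$. The computation above shows it just fits, using $B\le \frac{1}{d+1}$ and $\epsilon\le \frac6n$. I would also state explicitly the identification of $\partial N(\mathcal{U}_\epsilon)$ with the simplices of $T$ contained in $\partial C_1$, which is implicit in the proof of the previous lemma. This identification holds because a simplex of $T$ lies in $\partial C_1$ iff it lies in a single facet of the grid, and in that case its vertices share a coordinate value in $\{0,1\}$. The index slip $k$ versus $t$ in the statement is harmless: the argument works for the full intersection, and a fortiori whenever any point of the relevant intersection is near the boundary.
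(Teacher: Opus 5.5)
Your proof is correct. It uses the same numerical estimate as the paper but a different and sounder route. The paper argues by contrapositive: if $[U_{p_0},\dots,U_{p_t}]$ is not in $\partial N(\mathcal{U}_\epsilon)$, it asserts that some vertex $p_j$ lies off $\partial C_1$, hence $d(p_j,\partial C_1)\ge \frac1n$, and then
\[
d\Bigl(\textstyle\bigcap_i U_{p_i},\partial C_1\Bigr)\ \ge\ d(U_{p_j},\partial C_1)\ \ge\ \frac1n-\frac{d}{(d+1)n}-\frac{2}{5(d+1)n}=\frac{3}{5(d+1)n}\ \ge\ \frac{\epsilon}{10(d+1)}.
\]
The first step is false for $T$, because a simplex can have all its vertices on $\partial C_1$ without lying in $\partial C_1$. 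For instance, take $d=2$ with the diagonals parallel to $(1,1)$. The edge joining $(1-\frac1n,0)$ to $(1,\frac1n)$ and the triangle $[(1-\frac1n,0),(1,0),(1,\frac1n)]$ are such simplices; the other choice of diagonal gives analogous examples at the other two corners.

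You instead apply the estimate one coordinate at a time. Fixing the coordinate $j$ in which the chosen point $x$ is close to $\partial C_1$ forces $(p_i)_j=0$ for \emph{every} $i$. So all vertices lie in one common facet, which is exactly what membership in $\partial C_1$ requires. Read contrapositively, your argument is the repaired version of the paper's. If the simplex is not in $\partial C_1$, then for each facet some vertex lies at distance at least $\frac1n$ from that facet, possibly a different vertex for different facets. The same bound then applies facet by facet.

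Two minor points. First, the sentence ``$\frac3n<\epsilon$, so $\frac1n>\frac\epsilon6$'' is wrong as written; that inequality gives $\frac1n<\frac\epsilon3$. What you actually use, $\epsilon\le\frac6n$ from $\frac\epsilon2\le\frac3n$, is correct. Second, the statement must be read with $k=t$. For $k<t$ it is false: take $k=0$ and an edge from a boundary vertex to an interior vertex. So there is no ``a fortiori'' version for partial intersections.
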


\begin{proof}
We prove the contrapositive of the above statement. That is, if $[U_0,U_1,\dots ,U_t]$ is not a simplex of $\partial N(\mathcal{U}_{\epsilon})$ then 
$d(\cap_{i=0}^{k}U_i,\partial C_1)\geq \frac{\epsilon}{(d+1)10}$.

For $1\leq i\leq t$ let $p_i\in T^{(0)}$ be such that $U_{i}=U_{p_i}$. Since the simplex $[p_0,\dots ,p_t]$ does not belong to the boundary $\partial C_1$ at least one of the vertices say $p_j$ lies outside $\partial C_1$. Consequently, $d(p_j,\partial C_1)>\frac{1}{n}$. 
    Recall that the diameter of any simplex in the barycentric subdivision $Bd(T)$ is at most $ \frac{d}{(d+1)n}$. As a result, for any $ x\in U_{p_i}=N_{\frac{2B}{5n}}(\operatorname{St}(q))$, $d(q,x)\leq \frac{d}{(d+1)n} + \frac{2}{5(d+1)n}$. 
     For any $ x\in U_{p_j}$, 
     $$ d(x,\partial C_1)\geq d(q, \partial C_1)-d(q,x)\geq \frac{1}{n} - \frac{d}{(d+1)n} - \frac{2}{5(d+1)n}=\frac{3}{5(d+1)n}.$$ 
Consequently, $$d(\cap_{i=0}^{k}U_{p_i},\partial C_1)\geq d(U_{p_j},\partial C_1)\geq \frac{3}{5(d+1)n}\geq \frac{\epsilon}{(d+1)10}.$$
\end{proof}

\begin{lemma}\label{lemthree}
  Let $ \epsilon_1,\epsilon_2 \in (0,1)$ be such that $\epsilon_1 < \epsilon_2 M$. Then, for any refinement map $f$,  $$f_{\ast}:\widetilde{H}_{d}(N(\mathcal{U}_{\epsilon_{1}}),\partial N(\mathcal{U}_{\epsilon_{1}}))\rightarrow \widetilde{H}_d( N(\mathcal{U}_{\epsilon_{2}}),\partial N(\mathcal{U}_{\epsilon_{2}}))$$ is an isomorphism.
\end{lemma}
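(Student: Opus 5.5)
Both nerves are triangulations of the disk $C_1$ (Lemma \ref{lemtwo}), so both relative groups are copies of $\mathbb{F}_2$, each generated by the fundamental class, namely the sum of all $d$-simplices. Hence it suffices to show that $f_\ast$ is nonzero, i.e.\ that $f$ has degree one mod $2$ as a map of pairs. By the preceding lemma, $f$ maps $\partial N(\mathcal{U}_{\epsilon_1})$ into $\partial N(\mathcal{U}_{\epsilon_2})$, so $f_\ast$ is defined. Through the long exact sequences of the pairs, the connecting maps $\widetilde{H}_d(N,\partial N)\to \widetilde{H}_{d-1}(\partial N)$ are isomorphisms (both $N$ are contractible, and $\partial N\cong S^{d-1}$). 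By naturality it is therefore enough to show that $f|_{\partial}:\partial N(\mathcal{U}_{\epsilon_1})\to\partial N(\mathcal{U}_{\epsilon_2})$ induces an isomorphism on $\widetilde H_{d-1}$. (This is where reduced homology makes the case $d=1$, with $S^0$, uniform.)

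To compute the degree, I compare $f$ with the identity of $C_1$ using the geometric realizations. Let $\iota_j:|N(\mathcal{U}_{\epsilon_j})|\to C_1$ be the homeomorphism induced by the simplicial isomorphism $U_p\mapsto p$ onto $T_j$; it maps boundary to boundary. I claim that $\iota_2\circ|f|$ is homotopic to $\iota_1$ through maps sending $|\partial N(\mathcal{U}_{\epsilon_1})|$ into $\partial C_1$. The natural candidate is the straight-line homotopy $h_s=(1-s)\iota_1+s\,\iota_2\circ|f|$. On a simplex $[U_{p_0},\dots,U_{p_t}]$ the two maps are affine, and they send vertex $U_{p_i}$ to $p_i$ and to $q_i$ respectively, where $f(U_{p_i})=U_{q_i}$. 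Since $U_{p_i}\subseteq U_{q_i}$, the distance $d(p_i,q_i)$ is bounded by a small multiple of $\epsilon_2$, so the homotopy moves points only a bounded amount and stays inside the convex set $C_1$. The key point is boundary preservation. If the simplex lies in $\partial N(\mathcal{U}_{\epsilon_1})$, the $p_i$ lie on a common face of the cube. We need the $q_i$ to lie on that same face, and this is the step I expect to be the main obstacle. I would argue as follows: $U_{p_i}\subseteq U_{q_i}$ and $p_i$ lies on the face $\{x_m=0\}$ (say), so $U_{q_i}$ meets that face. By the distance estimate used in the proof of Lemma \ref{lem:boundary}, $q_i$ then lies on $\{x_m=0\}$ as well. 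Hence the convex combinations stay inside that face, and so in $\partial C_1$.

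It follows that, as maps of pairs $(|N_1|,|\partial N_1|)\to(C_1,\partial C_1)$, we have $\iota_2\circ|f|\simeq\iota_1$. Since $\iota_1$ is a homeomorphism of pairs, it induces an isomorphism on $H_d$. So $(\iota_2)_\ast f_\ast$ is an isomorphism, and because $(\iota_2)_\ast$ is an isomorphism, $f_\ast$ is an isomorphism as well. The routine remaining checks are the identification of simplicial and singular relative homology, and the fact that every face constraint satisfied by $p_i$ is inherited by $q_i$ (apply the face argument coordinate by coordinate, including at corners).
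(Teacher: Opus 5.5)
Your proof is correct, and it takes a different route from the paper's.

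\textbf{The paper's route.} The paper also starts by using the connecting homomorphisms to reduce to $f_\ast$ on $\widetilde H_{d-1}$ of the boundary spheres. It then inducts on $d$. The case $d=1$ is checked by hand: the two end intervals must go to the two end intervals. In the inductive step one fixes a facet $D$ of $C_1$ and lets $M_i\subseteq\partial N(\mathcal{U}_{\epsilon_i})$ be the nerve of the induced cover of $D$, with complementary $(d-1)$-disk $L_i$. The long exact sequence of $(X_i,L_i)$, together with excision via the regular neighbourhoods of Corollary~\ref{cor:nbd}, identifies $\widetilde H_{d-1}(\partial N(\mathcal{U}_{\epsilon_i}))$ with $\widetilde H_{d-1}(M_i,\partial M_i)$, where the inductive hypothesis applies.

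\textbf{Your route.} You realize both nerves inside the same convex cube via Lemma~\ref{lemtwo}. You then show directly that $\iota_2\circ|f|\simeq\iota_1$ as maps of pairs, so no induction, excision or regular neighbourhoods are needed. Two parts of your write-up are never used: the reduction to the boundary spheres in your first paragraph, and the bound on $d(p_i,q_i)$ (convexity of $C_1$ already suffices).

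The step you flagged as the main obstacle does go through. Suppose $p_m=0$, $f(U_p)=U_q$, and $q_m\neq 0$, so $q_m\ge 1/n$. Every $x\in U_q$ is within $\frac{d}{(d+1)n}+\frac{2B}{5n}$ of $q$ in $\ell_\infty$. Since $B\le\frac{1}{d+1}$, this gives $x_m\ge\frac{3}{5(d+1)n}>0$, which contradicts $p\in U_p\subseteq U_q$. The case $x_m=1$ is symmetric. A simplex of $T_1$ contained in $\partial C_1$ lies in a single facet by convexity, so the image vertices lie in that same facet. This also re-proves $f(\partial N(\mathcal{U}_{\epsilon_1}))\subseteq\partial N(\mathcal{U}_{\epsilon_2})$.

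\textbf{What each approach buys.} Your argument is short, uniform in $d$, and its only geometric input is this facet-by-facet preservation property. The paper's inductive step tacitly needs the same property without spelling it out: it is needed to get $f(M_1)\subseteq M_2$ and $f(L_1)\subseteq L_2$, and to know that $f|_{M_1}$ is a refinement map between the covers of $D$ to which the inductive hypothesis applies. The paper's route stays within long exact sequences and excision, in keeping with its homological toolkit, at the cost of that extra unverified bookkeeping.
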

\begin{proof}
   Let $\epsilon_1<\frac{\epsilon_2}{M}$.
   \par
   For $i=1,2$, the long exact of the pair $(N(\mathcal{U}_{\epsilon_{i}}),\partial N(\mathcal{U}_{\epsilon_{i}}))$ yields isomorphisms
   \begin{equation*}
      \delta_{i}:\widetilde{H}_d(N(\mathcal{U}_{\epsilon_{i}}),\partial N(\mathcal{U}_{\epsilon_{i}}))\rightarrow \widetilde{H}_{d-1}(\partial N(\mathcal{U}_{\epsilon_{i}})).
   \end{equation*}

Furthermore, by naturality of the long exact sequence the following diagram commutes:
\begin{center}
    \begin{tikzcd}
    \widetilde{H}_d(N(\mathcal{U}_{\epsilon_{1}}),\partial N(\mathcal{U}_{\epsilon_{1}}))\arrow{r}{f_\ast}\arrow{d}{\delta_1}&              \widetilde{H}_d(N(\mathcal{U}_{\epsilon_{2}}),\partial N(\mathcal{U}_{\epsilon_{2}}))\arrow{d}{\delta_2}\\
    \widetilde{H}_{d-1}(\partial N(\mathcal{U}_{\epsilon_{1}}))\arrow{r}{f_\ast}& \widetilde{H}_{d-1}(\partial N(\mathcal{U}_{\epsilon_{2}}))\\
    \end{tikzcd}
    \end{center}
    Furthermore, the maps $\delta_1$ and $\delta_2$ are isomorphisms.
    Consequently, the map \\$f_\ast: \widetilde{H}_d(N(\mathcal{U}_{\epsilon_{1}}),\partial N(\mathcal{U}_{\epsilon_{1}}))\rightarrow \widetilde{H}_d(N(\mathcal{U}_{\epsilon_{2}}),\partial N(\mathcal{U}_{\epsilon_{2}}))$ is an isomorphism if and only if the map $f_\ast: \widetilde{H}_{d-1}(\partial N(\mathcal{U}_{\epsilon_{1}}))\rightarrow \widetilde{H}_{d-1}(\partial  N(\mathcal{U}_{\epsilon_{2}}))$ is an isomorphism.
    \\
    We use induction on $d$ with base case $d=1$.

\vspace{5mm}
\textit{Base Case}:
In this case $C_1$ is just the unit interval $[0,1]$. Let $m\in \mathbb{N}$ be such that $\frac{\epsilon_1}{2} \le \frac{3}{m} < \epsilon_1.$ Performing step 1. of the above given construction yields the partition of $[0,1]$ into $m$ segments of equal length $ \{[\frac{i}{m},\frac{i+1}{m}]\}_{i=0}^{m-1}$. The cover $\mathcal{U}_{\epsilon_{1}}$ is given by thickening each of these intervals by $\frac{2B}{5n}$ at both ends: $\{[0,\frac{1}{m}+\frac{2B}{5m}),(\frac{1}{m}-\frac{2B}{5m},\frac{2}{m}+\frac{2B}{5m}),\dots,(\frac{m-1}{m}-\frac{2B}{5m},1]\}$.
\par
Similarly, let $n\in \mathbb{N}$ be such that $\frac{\epsilon_2}{2} \le \frac{3}{n} < \epsilon_2.$ The cover $\mathcal{U}_2$ is $\{[0,\frac{1}{n}+\frac{2B}{5n}),(\frac{1}{n}-\frac{2B}{5n},\frac{2}{n}+\frac{2B}{5n}),(\frac{2}{n}-\frac{2B}{5n},\frac{3}{n}+\frac{2B}{5n}),\dots,(\frac{n-1}{n}-\frac{2B}{5n},1]\}$.
\par
For $i=1,2$, the nerve $N(\mathcal{U}_{\epsilon_i})$ is homeomorphic to  the unit interval $[0,1]$. The boundary $\partial N(\mathcal{U}_{\epsilon_1})$ consists of two (non-adjacent) vertices corresponding to the sets $\{[0,\frac{1}{m}+\frac{2B}{5m}),(\frac{m-1}{m}-\frac{2B}{5m},1]\}$. Analogously the boundary
$\partial N(\mathcal{U}_{\epsilon_2})$ consists of two isolated vertices $\{[0,\frac{1}{n}+\frac{2B}{5n}),(\frac{n-1}{n}-\frac{2B}{5n},1]\}$.
Under $f$, $[0,\frac{1}{m}+\frac{2B}{5m})$ maps to the set $[0,\frac{1}{n}+\frac{2B}{5n})$ as this is the only set in $ \mathcal{U}_{\epsilon_2}$ that contains $0$. Similarly, $(\frac{m-1}{m}-\frac{2B}{5m},1]$ must necessarily map to $(\frac{n-1}{n}-\frac{2B}{5n},1]$. Thus $f$ is a homeomorphism between $\partial N(\mathcal{U}_{\epsilon_{1}})$ and $\partial  N(\mathcal{U}_{\epsilon_{2}})$. As a result,
the map $f_\ast: \widetilde{H}_{0}(\partial N(\mathcal{U}_{\epsilon_{1}}))\rightarrow \widetilde{H}_{0}(\partial  N(\mathcal{U}_{\epsilon_{2}}))$ is an isomorphism. In light of above discussion, it follows that $f_\ast: \widetilde{H}_1(N(\mathcal{U}_{\epsilon_{1}}),\partial N(\mathcal{U}_{\epsilon_{1}}))\rightarrow \widetilde{H}_1(N(\mathcal{U}_{\epsilon_{2}}),\partial N(\mathcal{U}_{\epsilon_{2}}))$ is an isomorphism. This completes the proof for $d=1$.
\vspace{5mm}
\\
\textit{Induction Step}:
  Let $D$ be a fixed $(d-1)$ dimensional face of $C_1$. For $i=1,2$, let $T_i$ be the triangulation of $C_1$ constructed as in the step 1. of the construction given above for $\epsilon=\epsilon_i$.  Let $\mathcal{U}_{\epsilon_{i}}$ denote the cover of $C_1$ obtained by performing step 2. and step 3. of the above construction on $T_i$. Let $S_i$ be the subcomplex of $T_i$ spanned by vertices of $T_i$ contained in $D$. Thus, $S$ is a triangulation of $D$. For $i=1,2$, let $\mathcal{V}_{\epsilon_{i}}$ denote the covers of $D$ obtained by performing step 2. and step 3. of the construction given above on the triangulation $S_i$. Then $N(\mathcal{V}_{\epsilon_{i}})$ is naturally a subcomplex of $\partial N(\mathcal{U}_{\epsilon_{i}})$.
   \\
  To simplify notation we denote $\partial N(\mathcal{U}_{\epsilon_{i}})$ by $X_i$ for $i=1,2$ and we denote the subcomplexes $N(\mathcal{V}_{\epsilon_{i}})$ by $M_i$. Let $N_i$ denote the boundary $\partial N(\mathcal{V}_{\epsilon_i})$. By assumption, $p_\ast:\widetilde{H}_{d-1}(M_1,N_1)\rightarrow \widetilde{H}_{d-1}(M_2, N_2)$ is an isomorphism. Let $U_i$ be an open neighborhood of $M_i$
 such that $U_i$ deformation retracts onto $M_i$ and $ p^{-1}(U_2)=U_1$. Such a pair of neighborhoods exists by Corollary \ref{cor:nbd}. Let $V_i$ denote the set $U_i\setminus int(M_i)$. Let $L_i$ denote the set $X_i\setminus int(M_i)$. Consider the following commutative diagram where the two horizontal rows correspond to the long exact sequences of the pairs $(X_1,L_1) $ and $(X_2,L_2)$ respectively.
   \begin{center}
\begin{tikzcd}
\\
0 \simeq \widetilde{H}_{d-1}(L_1)\arrow[d, ]\arrow[r,swap,]&\widetilde{H}_{d-1}(X_1)\arrow[d, ]\arrow[r,swap]
  & \widetilde{H}_{d-1}(X_1,L_1)\arrow[d, ] \arrow[r,swap,]
  & \widetilde{H}_{d-2}(L_1)\arrow[d, ] \simeq 0  \\
0 \simeq \widetilde{H}_{d-1}(L_2)\arrow[r,swap,]&\widetilde{H}_{d-1}(X_2)\arrow[r,swap]
  & \widetilde{H}_{d-1}(X_2,L_2) \arrow[r,swap,]
  & \widetilde{H}_{d-2}(L_2) \simeq 0
\end{tikzcd}
\end{center}
Note that $L_i$'s are homeomorphic to $\mathbb{D}^{d-1}$ and hence their (reduced) homology groups vanish in all dimensions.
It follows that $p_\ast:\widetilde{H}_{d-1}(X_1)\rightarrow \widetilde{H}_{d-1}(X_2)$ is an isomorphism if and only if $p_\ast:\widetilde{H}_{d-1}(X_1,L_1)\rightarrow \widetilde{H}_{d-1}(X_2,L_2)$ is an isomorphism.
 The retraction that takes $U_i$ to $M_i$ restricts to a retraction of $V_i$ onto $N_i$. It follows that the canonical inclusion $ j_{i}:(M_i,N_i)\hookrightarrow(U_i,V_i)$ induces an isomorphism $\widetilde{H}_{d-1}(M_i,N_i)\hookrightarrow \widetilde{H}_{d-1}(U_i,V_i)$ on the corresponding homology groups.
 By the Excision principle, for $i=1,2$, the maps $k_i : \widetilde{H}_{d-1}(U_i,V_i) \rightarrow \widetilde{H}_{d-1}(X_i,L_i)$ induced by the canonical inclusion maps $k_i:(U_i,V_i)\hookrightarrow(X_i,L_i)$ are isomorphisms.
    Moreover, the following diagram commutes:
    \begin{center}
    \begin{tikzcd}
   (M_1,N_1)\arrow{r}{k_1\circ j_1}\arrow{d}{p}& (X_1,L_1)\arrow{d}{p}\\
   (M_2, N_2) \arrow{r}{k_2\circ j_2}&(X_2,L_2) \\
    \end{tikzcd}
    \end{center}
    Thus we get the following commutative square on the respective homology groups:
    \begin{center}
   \begin{tikzcd}
 \widetilde{H}_{d-1}(M_1,N_1) \arrow{r}{ k_{1\ast}\circ j_{1\ast}}\arrow{d}{p_\ast}&  \widetilde{H}_{d-1}(X_1,L_1)\arrow{d}{p_\ast}\\
 \widetilde{H}_{d-1}(M_2,N_2)\arrow{r}{k_{2\ast}\circ j_{2\ast}}& \widetilde{H}_{d-1}(X_2,L_2) \\
    \end{tikzcd}
    \end{center}
    It follows that the map $p_\ast: \widetilde{H}_{d-1}(X_1,L_1) \rightarrow \widetilde{H}_{d-1}(X_2,L_2)$ is an isomorphism if and only if $p_\ast: \widetilde{H}_{d-1}(M_1,N_1) \rightarrow \widetilde{H}_{d-1}(M_2,N_2)$ is an isomorphism.
    The statement now follows.
\end{proof}

We use the above results to prove the following result about covers of cubes of arbitrary side length satisfying certain prescribed properties:  

\begin{theorem}\label{thm:cover}
    Let $C_L$ denote the $d$-dimensional cube of side length $L$, $C_L=[0,L]^{d}$, equipped with the $l_{\infty}$-metric. There exists a constant $M=M(d)\in (0,1)$ such that for every $0<\epsilon<1$, there exist a cover $\mathcal{V}_{\epsilon}$ of $C_L$ satisfying the following properties:
    \begin{enumerate}
        \item $\operatorname{mesh}(\mathcal{V}_{\epsilon})<\epsilon L$
        \item $\mathcal{L}(\mathcal{V}_\epsilon)\geq\epsilon LM$
        \item   $|N(\mathcal{V}_{\epsilon})|$ is homeomorphic to $\mathbb{D}^{d}$. In particular, $ \mathcal{V}_{\epsilon}$ has multiplicity $d+1$.
        \item If $\epsilon_1,\epsilon_2>0$ be such that $\epsilon_1 < \frac{\epsilon_2}{M}$, so that $\mathcal{V}_{\epsilon_{1}}\preccurlyeq \mathcal{\mathcal{V}}_{\epsilon_{2}}$. Then, for any refinement map $p$, $p(\partial N(\mathcal{V}_{\epsilon_{1}}))\subseteq \partial N(\mathcal{V}_{\epsilon_{2}})$ and  $$p_{\ast}:H_{d}(N(\mathcal{V}_{\epsilon_{1}}),\partial N(\mathcal{V}_{\epsilon_{1}}))\rightarrow H_d( N(\mathcal{V}_{\epsilon_{2}}),\partial N(\mathcal{V}_{\epsilon_{2}}))$$ is an isomorphism.
        \item Let $[U_{0},U_{1},\dots,U_{t}]$ be a simplex of $N(\mathcal{V}_{\epsilon})$ such that $d(\cap_{i=0}^{k}U_i,\partial C_L)< \frac{\epsilon L}{(d+1)10}$. Then, $[U_{0},U_{1},\dots,U_{t}]$ lies in $\partial N(\mathcal{V}_{\epsilon})$.
    \end{enumerate}

\end{theorem}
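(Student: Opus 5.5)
The plan is to obtain $\mathcal{V}_\epsilon$ by dilating the unit-cube covers $\mathcal{U}_\epsilon$ of Section \ref{section:covers}. Let $h_L:C_1\to C_L$ be the dilation $x\mapsto Lx$; in the $\ell_\infty$-metric it multiplies all distances by exactly $L$. Set
$$\mathcal{V}_\epsilon=\{h_L(U)\mid U\in\mathcal{U}_\epsilon\},$$
and take $M=M(d)=B(d)/3$ as in Lemma \ref{lemone}. The map $U\mapsto h_L(U)$ is a bijection $\mathcal{U}_\epsilon\to\mathcal{V}_\epsilon$. Since $h_L$ is a bijection, it preserves intersections, so this map induces a simplicial isomorphism $N(\mathcal{U}_\epsilon)\cong N(\mathcal{V}_\epsilon)$. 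Inclusions are also preserved: $U\subseteq U'$ if and only if $h_L(U)\subseteq h_L(U')$. Hence refinement maps $\mathcal{V}_{\epsilon_1}\to\mathcal{V}_{\epsilon_2}$ correspond exactly to refinement maps $\mathcal{U}_{\epsilon_1}\to\mathcal{U}_{\epsilon_2}$ under conjugation by these isomorphisms.

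The metric properties then follow by scaling. Diameters scale by $L$, so (1) follows from $\operatorname{mesh}(\mathcal{U}_\epsilon)<\epsilon$ in Lemma \ref{lemone}. A set of diameter at most $\epsilon LM$ in $C_L$ is the image of a set of diameter at most $\epsilon M$ in $C_1$, and that set lies in some $U\in\mathcal{U}_\epsilon$ by Lemma \ref{lemone}; this gives (2). For (3), Lemma \ref{lemtwo} together with the isomorphism of nerves gives $|N(\mathcal{V}_\epsilon)|\cong\mathbb{D}^d$. The multiplicity claim holds because the nerve is $d$-dimensional, so no point lies in more than $d+1$ sets. For (5), distances to $\partial C_L=h_L(\partial C_1)$ also scale by $L$, so the hypothesis $d(\cap U_i,\partial C_L)<\frac{\epsilon L}{10(d+1)}$ pulls back to the hypothesis of Lemma \ref{lem:boundary}. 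The boundary subcomplexes correspond under the nerve isomorphism, so the conclusion transfers.

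For (4), conjugate a refinement map $p$ to a refinement map $f$ between the unit-cube nerves. The unnumbered lemma preceding Lemma \ref{lem:boundary} gives $f(\partial N(\mathcal{U}_{\epsilon_1}))\subseteq\partial N(\mathcal{U}_{\epsilon_2})$. Lemma \ref{lemthree} gives that $f_\ast$ is an isomorphism on $\widetilde H_d$ of the pairs. Relative reduced homology agrees with ordinary relative homology when the subspace is nonempty, and here $d\ge 1$, so this is $H_d$. Both statements transfer to $p$ via the nerve isomorphisms.

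The main obstacle is the mismatch of hypotheses in (4): the statement assumes $\epsilon_1<\epsilon_2/M$, while the earlier lemmas assume $\epsilon_1<\epsilon_2M$, and only the latter guarantees $\mathcal{U}_{\epsilon_1}\preccurlyeq\mathcal{U}_{\epsilon_2}$, through $\operatorname{mesh}<\epsilon_1<\epsilon_2M\le\mathcal{L}(\mathcal{U}_{\epsilon_2})$. I would read (4) as intended under the refinement condition $\epsilon_1<\epsilon_2M$, which is what the phrase ``so that $\mathcal{V}_{\epsilon_1}\preccurlyeq\mathcal{V}_{\epsilon_2}$'' signals. I would state explicitly that the argument proceeds under this condition, since it is the one used in Lemma \ref{lemthree} and is all that is needed later.
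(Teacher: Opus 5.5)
Your proposal is the paper's proof with the bookkeeping written out: you dilate the unit-cube covers $\mathcal U_\epsilon$ by $L$, transport Lemmas \ref{lemone}, \ref{lemtwo}, \ref{lemthree}, \ref{lem:boundary} and the boundary-preservation lemma through the induced nerve isomorphisms, and read (4) under $\epsilon_1<\epsilon_2M$, which is the hypothesis those lemmas use and the only one that guarantees $\mathcal V_{\epsilon_1}\preccurlyeq\mathcal V_{\epsilon_2}$. The one thing to adjust is the constant you inherit from Lemma \ref{lemone}: since $\frac3n\ge\frac\epsilon2$ only gives $\frac1n\ge\frac\epsilon6$, the thickening yields $\mathcal L(\mathcal U_\epsilon)\ge\frac{2B}{5n}\ge\frac{B\epsilon}{15}$, and $M=B/3$ is genuinely too large (for $d=1$, where $B=\frac12$, one computes $\mathcal L(\mathcal U_\epsilon)=\frac{2}{5n}<\frac{1}{2n}<\frac{\epsilon}{6}$), so take $M=B/15$ instead; nothing else in your argument changes.
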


\begin{proof}
The above construction shows that for $L=1$ and for any $\epsilon\in (0,1)$ one can construct a cover $\mathcal{U}_\epsilon$ such that the above properties are satisfied. Let $\delta_L:C_1\rightarrow C_L$ denote the dilation $\delta_L(x)=Lx$.
For arbitrary $L$, 
We start with the cover $\mathcal{U}_\epsilon$ and apply  $\delta_{L}$ to obtain the cover $\mathcal{V}_\epsilon$ of $C_L$:  
$$\mathcal{V}_\epsilon=\{\delta_L(U)|U\in \mathcal{U}_\epsilon\}$$
It can be easily verified that  $\textrm{mesh}(\mathcal{V}_\epsilon)=\textrm{mesh}(\mathcal{U}_\epsilon)\cdot L<\epsilon L$ and $\mathcal{L}(\mathcal{V}_\epsilon)=\mathcal{L}(\mathcal{U}_\epsilon)\cdot L \geq\frac{\epsilon L}{M}$. Properties (3),(4) and (5) can be obtained from Lemmas \ref{lemtwo}, \ref{lemthree}, \ref{lem:boundary} after suitably adapting the above given proof and taking the rescaling into account. 
\end{proof}

\section{Proof of Theorem \ref{mainresult}}\label{section:mainresult}
In this section we give a proof of Theorem \ref{mainresult} using results obtained in Section \ref{section:regnbd} and Section \ref{section:covers}. We first outline the idea of the proof.
\\
 \textit{Idea of the Proof}: We show that $\operatorname{asdim}(\mathcal{A})=a\geq d-1$. For the sake of contradiction assume that $a\leq d-2$. Let $g$ denote the function that satisfies the conclusion of Lemma \ref{lemmahomology} and let $\lambda=g(1)$. For a large enough $N\in \mathbb{N}$, we show that there exist covers $\mathcal{U}_1$ and $\mathcal{U}_2$ such that $\operatorname{mesh}(\mathcal{U}_1)<1$ and $\mathcal{L}(\mathcal{U}_2)>\lambda$ and subcomplexes $F_1$ and $F_2$ of $N(\mathcal{U}_1)$ and $ N(\mathcal{U}_2)$ respectively such that for some refinement map $p$, $p(F_1)\subseteq F_2$ and the induced map $p^\ast:H^{k}(N(\mathcal{U}_2),F_2)\rightarrow H^{k}(N(\mathcal{U}_1),F_1)$ is nontrivial for some $k>d-2$. To produce the covers $\mathcal{U}_1,\mathcal{U}_2$, we first produce appropriate covers $\mathcal{X}_1,\mathcal{X}_2$ of $C_N$ and take their restriction to $A_N$. For this we use Theorem \ref{thm:cover} derived in section \ref{section:covers}. There are natural subcomplexes $B_1$ and $B_2$ of $N(\mathcal{X}_1)$ and $N(\mathcal{X}_2)$ respectively such that $B_i$ is homotopy equivalent to $N(\mathcal{U}_i)$ for $i=1,2$ (Lemma \ref{lemma:relation}). Thus the problem can be reduced to finding appropriate subcomplexes $K_i$ of $B_i$ such that any refinement map $f:\mathcal{X}_1\rightarrow \mathcal{X}_2$ induces a nontrivial map $p^{\ast}:H^{k}(B_2,K_2)\rightarrow H^{k}(N(B_1,K_1)$ for some $k\in\{d-1,d\}$. The main tool we use to produce cohomology classes in $H^{k}(B_i,K_i)$ is the Alexander Duality theorem (Theorem \ref{poincarelefschetz}). Our covers $\mathcal{X}_i$ are constructed such that their nerves $N(\mathcal{X}_i)$ are homeomorphic to $\mathbb{D}^d$. Theorem \ref{poincarelefschetz}  cannot directly be applied to $B_i$ as these subcomplexes may not contain the boundary. Therefore, we first consider the  subcomplexes $D_i=B_i\cup \partial N (\mathcal{X}_i)$. We show that each $D_i$ topologically separates $N(\mathcal{X}_i)$ i.e. $\operatorname{rank}(H_{0}(N(\mathcal{X}_i)\setminus D_i))\geq 2$ (Lemma \ref{lemma:component}). Applying the Theorem \ref{poincarelefschetz}, this gives us, for each $i=1,2$, at least two (linearly independent) cohomology classes in $H^{d-1}(D_i)$ (Corollary \ref{rankcorollary}). Furthermore the cohomology classes in $H^{d-1}(D_2)$ are the images of the classes in $H^{d-1}(D_2)$ under the map $f^{\ast}:H^{d}(D_2)\rightarrow H^{d-1}(D_1)$. As $D_i$ is the union $B_i\cap \partial N(\mathcal{U}_i)$, the cohomology class $H^{d-1}(\partial N(\mathcal{U}_i))\simeq H^{d-1}(\mathbb{S}^{d-1})$ contributes one towards the rank of $H^{d-1}(D_i)$. We show that the other contribution must come from nontrivial classes in $H^{k}(B_i,K_i)$, for some $k\in\{d,d-1\}$, where $K_i=B_i\cap \partial N(\mathcal{U}_i)$. To show the nontriviality of the map $p^{\ast}:H^{k}(B_2,K_2)\rightarrow H^{k}(B_1,K_1)$ one needs that these cohomology classes are preserved under $p^{\ast}$. 
\vspace{5mm}
\par
We now give the proof of Theorem \ref{mainresult} in more detail:
 \begin{proof}[Proof of Theorem \ref{mainresult}]
Assume that $\operatorname{asdim}(\mathcal{A})=a\leq d$ and let $g:\mathbb{R}_+\rightarrow \mathbb{R}_+$ be a function such that the conclusion of Lemma \ref{lemmahomology} is satisfied. 
Let $\lambda=g(1)$. We show that $a\geq d-1$. Let $M$ be the constant given by Theorem \ref{thm:cover}.
    Let  $N\in \mathbb{N}$ be such that $L_{N}>b_{N}>\frac{\lambda}{M}$.  By applying Theorem \ref{thm:cover} to $\epsilon = \frac{\lambda}{L_{N}M}<1$, we obtain a  cover $\mathcal{X}_2$ of $C_{N}$ such that $\operatorname{mesh}(\mathcal{X}_2)<M\lambda$, and $\mathcal{L}(\mathcal{X}_2)>\lambda$. Using, Theorem \ref{thm:cover} again, we obtain one more  cover $\mathcal{X}_1$ of $C_{N}$ such that $\operatorname{mesh}(\mathcal{X}_1)<\min\{b_{N},1\}$. Covers $\mathcal{X}_1$ and $\mathcal{X}_2$ are assumed to satisfy properties (3) and (4) mentioned in Theorem \ref{thm:cover}. 
   For $i=1,2$, let $\mathcal{U}_i=\{U\cap A_{N}|U\in \mathcal{X}_i\}$ denote the cover of $A_{N}$ obtained by restricting subsets of $\mathcal{X}_1$ to $A_{N}$. Note that $\operatorname{mesh} (\mathcal{U}_1)<\min\{b_{N},1,\frac{\lambda}{10(d+1)M}\}$ and $\mathcal{L}(\mathcal{U}_2)>\lambda$. Henceforth, for $i=1,2$, we denote $N(\mathcal{X}_i)$ by $X_i$ to simplify notation. We will fix a refinement map $N(\mathcal{X}_1)\rightarrow N(\mathcal{X}_2)$ and denote it by $f$.
    We also define two subcomplexes $B_1$ and $B_2$ of $X_1$ and $X_2$ respectively. Let $B_i$ be the subcomplex of $X_i$ consisting of all simplices $[U_0,U_1,\dots,U_t]\in X_i$ such that $A_N\cap U_0 \cap U_1\cap \dots \cap U_t\neq \emptyset$.

For $i=1,2$, there are natural simplicial maps  $s_i: N(\mathcal{U}_i) \rightarrow B_i$ and $r_i: B_i \rightarrow  N(\mathcal{U}_i)$ such that $s_i$ is an embedding  and  $r_i$ is a retraction of $B_i$ onto $s_i(N(\mathcal{U}_i))$. For each $P\in \mathcal{U}_i$ we choose an element $U\in \mathcal{X}_i$ such that $P\subseteq U$ and let $s_i$ be the map that sends $P$ to $U$. $s_i$ defines a simplicial map from $N(\mathcal{U}_i)\rightarrow X_i$ whose image is contained in $B_i$. Define $r_i$ to be the map that sends an element $U\in B_i^{(0)}$ to $U\cap A_N \in N(\mathcal{U}_i)$. 

\begin{lemma}\label{lemma:relation}
Let $s_i, r_i$ be as above. Then:
\begin{enumerate}
    \item $s_i \circ r_i = id_{B_i}$ and $r_i \circ s_i$ is contiguous to the identity map $id_{N(\mathcal{U}_i)}$. 
    \item For any refinement map $p_{\mathcal{U}_1}^{\mathcal{U}_2}$, the following diagram commutes up to contiguity:
        \begin{center}
        \begin{tikzcd}
            N(\mathcal{U}_1) \arrow[dd,"s_1"] \arrow[rr,"p_{\mathcal{U}_1}^{\mathcal{U}_2}"] & & N(\mathcal{U}_2) \\
            & & \\
            B_1 \arrow[rr,"f = p_{\mathcal{X}_1}^{\mathcal{X}_2}"] & & B_2 \arrow[uu,"r_2"]
        \end{tikzcd}
        \end{center}
\end{enumerate}
\end{lemma}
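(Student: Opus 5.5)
The plan is to make the identity $s_i\circ r_i=\mathrm{id}_{B_i}$ hold \emph{on the nose}. For this I will read $\mathcal{U}_i$ as an \emph{indexed} cover, as is implicit when nerves of restricted covers are formed. Its members are $P_U:=U\cap A_N$, indexed by those $U\in\mathcal{X}_i$ with $U\cap A_N\neq\emptyset$, i.e.\ by $B_i^{(0)}$. Two distinct $U\neq U'$ with the same trace on $A_N$ then give distinct vertices $P_U\neq P_{U'}$ of $N(\mathcal{U}_i)$.

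With this convention I make the following choices.
\begin{itemize}
\item In the definition of $s_i$, for the vertex $P=P_U$ I choose the element $U\in\mathcal{X}_i$ itself. It certainly satisfies $P\subseteq U$.
\item The map $r_i$ sends $U\in B_i^{(0)}$ to $P_U=U\cap A_N$, exactly as in the definition.
\end{itemize}
On vertices, $s_i\circ r_i(U)=s_i(P_U)=U$ and $r_i\circ s_i(P_U)=r_i(U)=P_U$. Hence both composites are the identity on vertex sets.

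Next I check that both maps are simplicial. A set $\{U_0,\dots,U_t\}$ spans a simplex of $B_i$ iff $A_N\cap U_0\cap\dots\cap U_t\neq\emptyset$. This holds iff $P_{U_0}\cap\dots\cap P_{U_t}\neq\emptyset$, i.e.\ iff $\{P_{U_0},\dots,P_{U_t}\}$ spans a simplex of $N(\mathcal{U}_i)$. So $s_i$ and $r_i$ are mutually inverse simplicial isomorphisms. This gives $s_i\circ r_i=\mathrm{id}_{B_i}$ exactly. Moreover $r_i\circ s_i=\mathrm{id}_{N(\mathcal{U}_i)}$, which is trivially contiguous to the identity, and this proves (1). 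I will also remark what happens if one instead allowed an arbitrary choice $s_i(P)=U'$ with $P\subseteq U'$. Then $r_i\circ s_i(P)=U'\cap A_N\supseteq P$ is a refinement map of $\mathcal{U}_i$ to itself, and Lemma~\ref{relativehomotopy} (with $P,Q$ empty) shows it is contiguous to the identity. This explains the contiguity phrasing, but the exact identity for $s_i\circ r_i$ rests on the indexed choice above.

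For (2), I first check that $f=p_{\mathcal{X}_1}^{\mathcal{X}_2}$ maps $B_1$ into $B_2$. Suppose $A_N\cap U_0\cap\dots\cap U_t\neq\emptyset$. Since $U_j\subseteq f(U_j)$, the same point lies in $A_N\cap f(U_0)\cap\dots\cap f(U_t)$. Then I compute $r_2\circ f\circ s_1$ on a vertex $P_U$ of $N(\mathcal{U}_1)$: it equals $f(U)\cap A_N\supseteq U\cap A_N=P_U$. So $r_2\circ f\circ s_1$ is itself a refinement map $\mathcal{U}_1\to\mathcal{U}_2$. Any two refinement maps between the same covers are contiguous by Lemma~\ref{relativehomotopy} with $P=Q=\emptyset$. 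Hence the square commutes up to contiguity with any $p_{\mathcal{U}_1}^{\mathcal{U}_2}$.

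The only delicate point is the bookkeeping in (1), namely choosing the indexing and $s_i$ so that $r_i$ is injective on vertices. Everything else is a direct check of intersections.
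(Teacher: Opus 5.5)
Your argument is correct, and part (2) is exactly the paper's: $r_2\circ f\circ s_1$ is itself a refinement map, so it is contiguous to any $p_{\mathcal{U}_1}^{\mathcal{U}_2}$ by Lemma~\ref{relativehomotopy}.

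Part (1) is where you differ. The paper treats $\mathcal{U}_i=\{U\cap A_N\}$ as a \emph{set} of traces. Members of $\mathcal{X}_i$ with equal traces therefore collapse to one vertex, and $r_i$ need not be injective. The paper then argues in two steps:
\begin{itemize}
\item It chooses $s_i(P)$ with $s_i(P)\cap A_N=P$, which gives $r_i\circ s_i=\mathrm{id}_{N(\mathcal{U}_i)}$ exactly.
\item It shows $s_i\circ r_i$ is contiguous to $\mathrm{id}_{B_i}$. The reason is that $U$ and $s_ir_i(U)$ have the same trace on $A_N$, so any point of $A_N\cap U_0\cap\dots\cap U_n$ lies in all $U_j$ and all $s_ir_i(U_j)$.
\end{itemize}
In substance, the paper proves (1) with the two composites interchanged. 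This interchanged form is also the one invoked in the last lemma of Section~\ref{section:mainresult}. Under the set convention, the literal identity $s_i\circ r_i=\mathrm{id}_{B_i}$ fails whenever two members of $\mathcal{X}_i$ share a nonempty trace.

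Your indexed convention is therefore what makes the statement true as written. It gives an honest simplicial isomorphism $N(\mathcal{U}_i)\cong B_i$, which also makes the later relative statements immediate, since $r_i(K_i)=s_i^{-1}(K_i)=F_i$. The price is a change of convention. That is harmless here, because mesh, Lebesgue number, refinement maps and Lemmas~\ref{relativehomotopy} and~\ref{lemmahomology} are insensitive to repeated members of a cover.

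Note one limitation of your fallback remark for arbitrary choices of $s_i$: it only handles $r_i\circ s_i$. Under the set convention you would need the paper's trace argument for $s_i\circ r_i$.
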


\begin{proof}
 Let $P \in N(\mathcal{U}_i)^{(0)}$. Then $P = U \cap A_N$ for some (possibly non-unique) $U_i \in \mathcal{X}_i$. The map $s_i$ sends $P$ to one such element $U \in B_i^{(0)}$ with $P \subseteq U$. Moreover, $r_i(U) = U \cap A_N = P.$ Therefore, $r_i \circ s_i = id_{N(\mathcal{U}_i)}$.
\\
Let $\sigma = [U_0, U_1, \dots, U_n]$ be a simplex in $N(\mathcal{U}_i)$. Then
$A_N \cap U_0 \cap \dots \cap U_n \neq \emptyset$.
Let
$r_i \circ s_i(\sigma) = [V_0, V_1, \dots, V_n].$
For each $0 \le i \le n$, we have $U_i \cap A_N = V_i \cap A_N$. Hence, $A_N \cap (\bigcap_{j=0}^{n} U_j) \cap (\bigcap_{j=0}^{n}V_j) \neq \emptyset.$ This shows that the set $\{U_0,\dots,U_n,V_0,\dots,V_n\}$ spans a (possibly degenerate) simplex in $B_i$.

By Lemma~\ref{simplicialhomotopy}, the identity map $id_{N(\mathcal{B}_i)}$ is contiguous to $r_i \circ s_i$.

Now for part~(2). By Lemma~\ref{relativehomotopy}, it suffices to show that $r_2 \circ f \circ s_1$ is a refinement map.

Let $P \in N(\mathcal{U})^{(0)}$ and let $j_1(P) = U$. Then $P = A_N \cap U$. Let $f(U) = V$ and $r_2(V) = Q$. By definition of $f$ and $r_2$, we have $U \subseteq V$ and $A_N \cap V = Q$. Thus,
$$
P = A_N \cap U \subseteq Q = A_N \cap V,
$$
and hence $r_2 \circ f \circ s_1$ is a refinement map.
\end{proof}

We define a few more subcomplexes of $X_1$ and $X_2$. 
Let $\partial X_i$ denote the topological boundary of $X_i$. Both $\partial X_1$ and $\partial X_2$ are homeomorphic to $\mathbb{S}^{d-1}$ and $f(\partial X_1)\subseteq \partial X_2$ (Theorem \ref{thm:cover}). 
   Let $D_{i}$ denote the subcomplex $B_{i} \cup \partial X_i$. 
 The main reason behind defining these subcomplexes is that Theorem \ref{poincarelefschetz} cannot be applied to $B_1$ and $B_2$ directly as these subcomplexes may not necessarily contain the respective boundaries.  \\
We fix vertices $O_1 \textrm{ and } P_1$ of $X_1$ such that $x_N\in O_1$ and $y_N\in P_1$. Let $O_2=f(O_1)$ and $P_2=f(P_1)$ so that $x_N\in O_2 \supset O_1$ and  $y_N\in P_2 \supset P_1$. We show that $X_1 \setminus D_1 $ (resp. $X_2\setminus D_2$) contains more than one connected component and that $O_1\textrm{ and }P_1$ (resp. $O_2$ and $P_2$) lie in different connected components: 
  
\begin{lemma}\label{lemma:component}
For $i=1,2$, $O_i$ and $P_i$ lie in distinct connected components of $X_i \setminus D_i$. 
\end{lemma}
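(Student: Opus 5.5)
The plan is to argue by contradiction. Suppose that for some $i$ there is a continuous path in $X_i\setminus D_i$ joining $O_i$ to $P_i$, and convert it into a continuous path in $C_N\setminus A_N$ joining $x_N$ to $y_N$. This contradicts hypothesis (2) of Theorem~\ref{mainresult}. Before starting, we check that $O_i,P_i\notin D_i$. We have $x_N\in O_1\subseteq O_2$ and $d(x_N,A_N\cup\partial C_N)>b_N$. The sets of $\mathcal{X}_1$ have mesh less than $b_N$, so $O_1$ misses both $A_N$ and $\partial C_N$. For $O_2$ we use the choice $b_N>\lambda/M>\operatorname{mesh}(\mathcal{X}_2)$. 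Consequently the vertex $O_i$ does not lie in $B_i$. By property (5) of Theorem~\ref{thm:cover}, and more precisely by its converse as it comes out of the proof of Lemma~\ref{lem:boundary}, a vertex whose set stays far from $\partial C_N$ is an interior vertex, so $O_i\notin\partial X_i$. The same argument applies to $P_i$.

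Next, by Lemma~\ref{connected} applied with $K=X_i$ and $L=D_i$, a path in $|X_i|\setminus|D_i|$ from $O_i$ to $P_i$ gives an edge-path $b_0=O_i,b_1,\dots,b_m=P_i$ in $Bd(X_i)$ avoiding $Bd(D_i)$. Each $b_j$ is the barycenter of a simplex $\sigma_j=[U_0,\dots,U_t]$ of $X_i$ that is not in $D_i$. Since $\sigma_j\notin B_i$, the intersection $\bigcap U_l$ is disjoint from $A_N$. Since $\sigma_j\notin\partial X_i$, property (5) of Theorem~\ref{thm:cover} should give a point of $\bigcap U_l$ in the interior of $C_N$; in any case $\bigcap U_l\subseteq C_N\setminus A_N$. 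For each $j$ we pick the point $z_{\sigma_j}\in\bigcap_{U\in\sigma_j}U$ from Remark~\ref{remark:point}, which is the barycenter of the corresponding simplex of the underlying triangulation, rescaled. Consecutive $\sigma_j,\sigma_{j+1}$ are face-related. By Remark~\ref{remark:point}, iterated along a chain of codimension-one faces, the segment from $z_{\sigma_j}$ to $z_{\sigma_{j+1}}$ lies in the intersection of the sets of the smaller simplex. That smaller simplex is itself not in $B_i$ only if it avoids $A_N$, and here care is needed: the smaller of two face-related simplices could a priori lie in $B_i$. However, the edge $[b_j,b_{j+1}]$ lies in $Bd(X_i)\setminus Bd(D_i)$, so both endpoints avoid $D_i$, and hence both simplices avoid $D_i$. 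The segment therefore stays in the intersection of the sets of a simplex not in $B_i$, and so it avoids $A_N$.

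Concatenating these segments gives a path in $C_N\setminus A_N$ from $z_{O_i}$ to $z_{P_i}$. For a vertex, $z_{O_i}$ is the centre point $p$ of $O_i$. Finally we join $x_N$ to $z_{O_i}$ by the straight segment inside $O_i$; this works because $O_i$ is, up to rescaling, a neighbourhood of a star, which is star-shaped about $p$, and $O_i$ avoids $A_N$. Similarly we join $z_{P_i}$ to $y_N$. The result is a path from $x_N$ to $y_N$ in $C_N\setminus A_N$, which is the desired contradiction.

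The main obstacle is the geometric bookkeeping in Remark~\ref{remark:point}. That remark only covers codimension-one faces, whereas consecutive barycenters in $Bd(X_i)$ can correspond to faces of higher codimension. The segments must be built through a chain of intermediate faces, each containing the smaller simplex and contained in the larger one. The worry is that every such intermediate simplex should avoid $A_N$, but an intermediate face of the larger simplex need not avoid $A_N$. To handle this, I would instead route through the intersection of the sets of the larger simplex $\sigma_{\max}$: both points $z_{\sigma_j}$ and $z_{\sigma_{j+1}}$ can be joined inside $\bigcap_{U\in\sigma_{\min}}U$. By the remark's straight-line property, iterated in the direction of decreasing simplices, this set avoids $A_N$ because $\sigma_{\min}\notin B_i$, and that is what makes the argument work.
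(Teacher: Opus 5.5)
Your proof is correct and is essentially the paper's argument in contrapositive form. The paper takes an arbitrary edge-path from $O_i$ to $P_i$ in $Bd(X_i)$ and builds the same concatenated path through the points $z_{\sigma_j}$, each piece inside $\bigcap_{U\in\sigma_{\min}}U$; it then concludes from a piece meeting $A_N$ that the smaller simplex lies in $B_i$, and it joins $x_N$ to $z_{O_i}$ using $\operatorname{diam}(O_i)<b_N$ and convexity of $\ell_\infty$-balls, where you use star-shapedness of $O_i$. Your care with higher-codimension face pairs is justified, since the paper asserts codimension one, which need not hold in $Bd(X_i)$. Your worry about intermediate faces dissolves, though: every face in a chain from $\sigma_{\min}$ to $\sigma_{\max}$ contains $\sigma_{\min}$, so its intersection lies inside $\bigcap_{U\in\sigma_{\min}}U$. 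In fact the straight segment $[z_{\sigma_{\min}},z_{\sigma_{\max}}]$ is an edge of the barycentric subdivision of the underlying triangulation and lies in every closed star $V_p$ indexing a vertex of $\sigma_{\min}$. Finally, $O_i\notin\partial X_i$ is better justified by the isomorphism $p\mapsto U_p$ with the triangulation, since a boundary vertex $U_p$ contains $p\in\partial C_N$; the proof of Lemma~\ref{lem:boundary} only yields the contrapositive of property (5), not the converse you need.
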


\begin{proof}
We first show that $O_i$ and $P_i$ lie in the complement of $D_i$. Since $\operatorname{mesh}(\mathcal{X}_i)<b_N$ and $d(x_N,A_N\cup \partial C_N)>b_N$, the sets $O_i,P_i$ intersect neither $ \partial C_N$ nor $A_N$. It follows that  the vertices $O_i$ and $P_i$ lie in the complement of $D_i$. 
       We now show that $O_i$ and $P_i$ lie in distinct connected components. By Lemma \ref{connected}, it suffices to prove that every edge-path joining $O_i$ to $P_i$ in $Bd(X_i)$ intersects $D_i$. Let $ O_i=w_1,w_2,\dots ,w_l=P_i$ be a sequence of vertices in $Bd(X_i)^{(0)}$ such that $w_j$ and $w_{j+1}$ are adjacent in $Bd(X_i)^{(0)}$ for each $1\leq j \leq l$. We first construct a path $\gamma$ in $C_N$ that joins $x_N$ to $y_N$. Every vertex $v\in Bd(X_i)^{(0)}$ is the barycenter of a unique simplex $\sigma_{v}=[U_0,U_1,\dots,U_{t-1}]\in X_i$. For each $ 1\leq j\leq l$, let $\sigma_j$ be the simplex such that $w_j$ is the barycenter of $\sigma_j$. For each $j$, either $\sigma_{j}$ is a face of $\sigma_{j+1}$ or vice versa. For each of the simplices $\sigma_j$, $1\leq j\leq l$, let $z_j=z_{\sigma_{j}}$ denote the point associated to $\sigma_j$ as defined in Remark \ref{remark:point}. For each pair $z_j,z_{j+1}$ we choose a path $\gamma_{j}$ that joins $z_j$ to $z_{j+1}$ such that $\gamma_{j}$ is contained in the set $ (\bigcap_{U\in \sigma_{j}}U)\cup (\bigcap_{V\in \sigma_{j+1}}V)$. This is always possible since either $\sigma_j$ is a codimension one face of $\sigma_{j+1}$ or vice versa. We obtain a path $\gamma$ from $x_N$ to $y_N$ by taking the concatenation of these paths $\gamma= \gamma_{1}\star \gamma_2 \star \dots \star \gamma_{l-1}$. Recall that $\operatorname{diam}(O_i),\operatorname{diam}(P_i)<b_N$. As both $x_N$ and $z_1$ lie in $O_i$ it follows that both $x_N$ and $z_1$ lie in the same path-component of $C_N\setminus A_N$. Similarly $y_N$ and $z_l$ belong to the same path-component of $C_N\setminus A_N$. As $z_1$ and $z_l$ lie in different components of $C_N\setminus A_N$, $\gamma$ intersects $A_N$ at least once. Now suppose that $\gamma$ intersects $A_N$ in the subpath $\gamma_k$.  One can assume without loss of generality, after exchanging the role of $x_N$ and $y_N$ if necessary, that $\sigma_k=[V_0,V_1,\dots,V_{t-1}]$ is a face of $\sigma_{k+1}=[V_0,V_1,\dots ,V_{t}]$. By construction, $\gamma_k\subseteq \cap_{j=0}^{t-1}V_j$. This implies that $A_N \cap (\cap_{j=0}^{t-1}V_j)\neq \emptyset$. Consequently, $\sigma_k \in B_i \subseteq D_i$. As a result, $w_k \in D_i$. 
     \end{proof}
   
The following lemma establishes that the image of the map $f^{\ast}:H^{d}(X_2,D_2)\rightarrow H^{d}(X_1,D_1)$ induced by $f$ contains a free abelian subgroup of rank two.

\begin{lemma}\label{mainlemma}
Consider the map $f^{\ast}:H^{d}(X_2,D_2)\rightarrow H^{d}(X_1,D_1)$. 
There exist $\chi,\chi'\in f^\ast(H^{d}(X_2,D_2))$ such that $\operatorname{rank}(\langle \chi,\chi'\rangle)=2$.
\end{lemma}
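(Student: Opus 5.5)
The plan is to use the Alexander duality isomorphism of Theorem~\ref{poincarelefschetz} with $k=d$. Then $H^d(X_i,D_i)\cong H_0(X_i\setminus D_i)$, and the problem becomes one about path components. Both $X_i$ are triangulated $d$-discs (Theorem~\ref{thm:cover}(3)), and $D_i$ is a subcomplex containing $\partial X_i$, so $\phi_{D_i}:H^d(X_i,D_i)\to H_0(X_i\setminus D_i)$ is defined and is an isomorphism. By Lemma~\ref{lemma:component}, the classes $[O_i]$ and $[P_i]$ in $H_0(X_i\setminus D_i)$ are linearly independent over $\mathbb{F}_2$. Let $\psi_i,\psi_i'\in H^d(X_i,D_i)$ be their preimages under $\phi_{D_i}$. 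I will take $\chi=f^\ast\psi_2$ and $\chi'=f^\ast\psi_2'$. It then suffices to show that $\phi_{D_1}(\chi)$ and $\phi_{D_1}(\chi')$ are linearly independent in $H_0(X_1\setminus D_1)$.

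Since $f$ is not an inclusion, the naturality square of Theorem~\ref{poincarelefschetz} does not apply directly. I will instead redo the construction of $\phi_D$ functorially along $f$. First check that $f^{-1}(D_2)\supseteq D_1$. Indeed, $f$ maps $B_1$ into $B_2$ because intersections with $A_N$ only grow under refinement, and $f(\partial X_1)\subseteq\partial X_2$ by Theorem~\ref{thm:cover}(4). Next, apply Corollary~\ref{cor:nbd} to $f$ and $L=D_2$. This gives a neighborhood $V$ of $D_2$ such that $U=f^{-1}(V)$ is a regular neighborhood of $K=f^{-1}(D_2)$. The map $f^\ast$ then factors as
\[
H^d(X_2,D_2)\to H^d(X_1,K)\to H^d(X_1,D_1),
\]
and the second arrow is dual, via the inclusion naturality of Theorem~\ref{poincarelefschetz}, to the map $H_0(X_1\setminus K)\to H_0(X_1\setminus D_1)$ induced by inclusion. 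For the first arrow I will trace $\phi$ through excision and the cap product with fundamental classes, using Theorem~\ref{naturality}. The key input is that $f_\ast$ carries the fundamental class $e_1\in H_d(X_1,\partial X_1)$ to $e_2$, which is Theorem~\ref{thm:cover}(4). Consequently $f$ also sends $e_U$ to $e_V$, since the classes $e_U,e_V$ are obtained from $e_1,e_2$ by $m_\ast$ and the inverse excisions. The expected conclusion is the formula
\[
f_\ast\bigl(\phi_{K}(f^\ast\psi)\bigr)=\phi_{D_2}(\psi)\quad\text{in } H_0(X_2\setminus D_2),
\]
and similarly after pushing forward from $X_1\setminus D_1$.

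With this projection formula in hand, independence follows. Suppose a nontrivial combination $a\chi+b\chi'$ were zero. Then its dual class in $H_0(X_1\setminus D_1)$ would vanish. Pushing this forward under $f_\ast\colon H_0(X_1\setminus D_1)\to H_0(X_2\setminus D_2)$, which is legitimate because $f(X_1\setminus D_1)$ need not avoid $D_2$ but does avoid it at the $K$-level, would give $a[O_2]+b[P_2]=0$. This contradicts Lemma~\ref{lemma:component}.

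The main obstacle is the bookkeeping in the projection formula. One must check that $f$ restricts to a map of pairs $(X_1^U,\partial U)\to(X_2^V,\partial V)$, compatible with the excisions and with the deformation retractions supplied by Corollary~\ref{cor:nbd}, so that Theorem~\ref{naturality} can be applied in relative form. I would first verify this at the chain level on the barycentric subdivisions used in Corollary~\ref{cor:nbd}, where $f^{-1}(V)=U$ holds exactly. This equality is what makes the pushforward of the fundamental class behave correctly.
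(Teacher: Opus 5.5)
Your reduction matches the paper's: you factor $f^*$ through $H^d(X_1,K)$ with $K=f^{-1}(D_2)$, build neighborhoods with Corollary~\ref{cor:nbd}, check $f_*e_U=e_V$, and obtain $f_*\circ\phi_K\circ f^*=\phi_{D_2}$. That part is fine.

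\textbf{The gap is the final step.} A relation $a\chi+b\chi'=0$ means $m_*(x)=0$ in $H_0(X_1\setminus D_1)$, where $x=\phi_K f^*(a\psi_2+b\psi_2')\in H_0(X_1\setminus K)$ and $m\colon X_1\setminus K\hookrightarrow X_1\setminus D_1$. There is no pushforward $H_0(X_1\setminus D_1)\to H_0(X_2\setminus D_2)$, because $f$ does not map $X_1\setminus D_1$ into $X_2\setminus D_2$. Moreover, $\ker m_*$ is spanned by sums $[c]+[c']$ of components of $X_1\setminus K$ lying in a common component of $X_1\setminus D_1$, and it need not lie in $\ker f_*$. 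So the projection formula, which only controls $f_*x$, says nothing about $m_*x$.

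No formal argument using only the properties you invoke can work. Take $d=1$ and $X_1=X_2=[0,1]$, triangulated so that the piecewise linear map $f$ with $f(0)=0$, $f(\tfrac15)=1$, $f(\tfrac25)=0$, $f(1)=1$ is simplicial. Let $D_2=\{0,\tfrac12,1\}$, $D_1=\{0,\tfrac15,1\}$, $O_1=\tfrac1{20}$, $P_1=\tfrac{17}{20}$. Then:
\begin{itemize}
\item $f(D_1)\subseteq D_2$, $f(\partial X_1)\subseteq\partial X_2$ and $f_*e_1=e_2$;
\item $O_i$ and $P_i$ are separated by $D_i$, with $f(O_1)=\tfrac14$ and $f(P_1)=\tfrac34$.
\end{itemize}
Yet a generic point of $(0,\tfrac12)$ and a generic point of $(\tfrac12,1)$ each have one preimage in $(0,\tfrac15)$ and two in $(\tfrac15,1)$. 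Hence $f^*\psi_2=f^*\psi_2'$ in $H^1(X_1,D_1)$ (both are dual to the component $(0,\tfrac15)$), and the image of $f^*$ has rank one.

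\textbf{What is missing is geometric input.} You must use that $f$ is a refinement map, $U\subseteq f(U)$; this is what stops a component of $X_1\setminus D_1$ from being folded across $A_N$ onto both sides. One way to organize it:
\begin{enumerate}
\item Localize the projection formula. For example, apply the naturality in Theorem~\ref{poincarelefschetz} to $D_2\subseteq X_2\setminus\Omega_O$, where $\Omega_O$ is the component of $O_2$. This shows $\phi_Kf^*\psi_2$ is supported on components of $X_1\setminus K$ mapped into $\Omega_O$, and likewise for $\psi_2'$ and $\Omega_P$.
\item Since $f_*$ and $m_*$ preserve augmentation and $\phi_{D_2}\psi_2=[O_2]$, both $m_*\phi_Kf^*\psi_2$ and $m_*\phi_Kf^*\psi_2'$ have augmentation $1$, so both are nonzero.
\item Show that no component of $X_1\setminus D_1$ contains components of $X_1\setminus K$ over both $\Omega_O$ and $\Omega_P$. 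Label each simplex $\sigma\notin D_i$ by the component of $C_N\setminus A_N$ containing the point $z_\sigma$ of Remark~\ref{remark:point}.
\begin{itemize}
\item The argument of Lemma~\ref{lemma:component} shows this label is constant on components of $X_i\setminus D_i$.
\item $\Omega_O$ and $\Omega_P$ carry the labels of $x_N$ and $y_N$ respectively.
\item The inclusion $\bigcap_{U\in\sigma}U\subseteq\bigcap_{V\in f(\sigma)}V$ makes the label $f$-invariant on $X_1\setminus K$, once you check (e.g.) that these intersections are connected for the covers of Theorem~\ref{thm:cover}.
\end{itemize}
\end{enumerate}
The two classes then have disjoint nonempty supports and are therefore independent.

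The paper's written proof takes your route and closes it by inverting $f_*$ on $\langle f_*\alpha,f_*\beta\rangle$. That step has the same defect: $f_*\phi_Ef^*=\phi_{D_2}$ only gives $\phi_Ef^*\phi_{D_2}^{-1}(f_*\alpha)\in\alpha+\ker f_*$.
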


\begin{proof}
Let $E$ denote the complex $f^{-1}(D_2)$. Then, the map $f:(X_1,D_1)\rightarrow(X_2,D_2)$ factors as $(X_1,D_1)\xrightarrow{m}(X_1,E)\xrightarrow{f} (X_2,D_2)$ where $m$ denotes the inclusion map.

 The maps $\phi_{E}$ and $\phi_{D_2}$ denote the isomorphisms given by the Theorem \ref{poincarelefschetz}. 
  Choose a neighborhood $V$ of $D_2$ such that the following four properties are satisfied: $V$ deformation retracts onto $D_2$, $X_2 \setminus D_2$ deformation retracts onto $X_2\setminus V$, $U:=f^{-1}(V)$ deformation retracts onto $E$, and $ X_1 \setminus U$ deformation retracts onto $X_1 \setminus E$. Such a neighborhood can always be constructed as shown in Corollary \ref{cor:nbd}. We define $X_1^{U}:=X_{1} \setminus U$  and $X_{2}^{V}=X_2\setminus V$. The pairs $(X_1^{U},\partial U)$ and $(X_1^{V},\partial V)$ are $d$-manifolds with boundary. Let $e_{U}\in H_{d}(X_1^{U},\partial U)$ and $e_{V}\in H_{d}(X_1^{V},\partial V)$ denote their fundamental classes. We first prove the following claim:
  
\vspace{5mm}
\textit{Claim: }Let $f_{\ast}:H_{d}(X_{1}^{U},\partial U) \rightarrow H_{d}(X_{2}^{V},\partial V)$ be the map induced on the homology by $f$. Then, $f_\ast(e_U)=e_V$. 
 
 \vspace{5mm}
 \textit{Proof of Claim:}
Consider the following commutative diagram: 
  \begin{center}
\begin{tikzcd}
H_{d}(X_1,\partial X_1)\arrow[r,"u_{1\ast}"]\arrow[d,"f_{\ast}"]
&H_{d}(X_1,\overline{U})\arrow[r,"(k_{1\ast})^{-1}"]\arrow[d,"f_{\ast}"]
&H_{d}(X_{1}^U,\partial U)\arrow[d,"f_{\ast}"]
 \\
H_{d}(X_2,\partial X_2)\arrow[r,"u_{2\ast}"]&H_{d}(X_2,\overline{V})\arrow[r,"(k_{2\ast})^{-1}"] 
  & H_{d}(X_2^{V},\partial{V}) 
\end{tikzcd}
\end{center}
 Here $u_{1\ast}$ is the map induced by the inclusion $u_1:\partial X_1 \hookrightarrow \overline{U}$ and $k_{1\ast}:H_d(X_1^U,\partial {U})\rightarrow H_{d}(X_{1}^U,\overline{U})$ is the map induced by the inclusion $k_1:(X_1^U,\partial {U})\hookrightarrow (X_{1},\overline{U})$. Similarly, $u_{2\ast}$ is the map induced by the inclusion $u_2:\partial X_2 \hookrightarrow \overline{V}$ and $k_{2\ast}:H_d(X_1,\partial {V})\rightarrow H_{d}(X_{1}^V,\overline{V})$ is the map induced by the inclusion $k_2:(X_2^V,\partial {U})\hookrightarrow (X_{2},\overline{V})$. Owing to Excision principle, $k_{1\ast}$ and $k_{2\ast}$ are isomorphisms. 
For $i=1,2$, let $e_i\in H_{d}(X_i,\partial X_i)$ denote the fundamental class of $(X_i,\partial X_i)$. Then, by Theorem \ref{thm:cover}, $f_\ast(e_1)=e_2$. The fundamental classes $e_1$ and $e_U$ satisfy, $(k_{1\ast})^{-1}\circ(u_{1\ast})(e_1)=e_U$. Similarly, $ (k_{2\ast})^{-1}\circ(u_{2\ast})(e_2)=e_V$. 
It follows that $ f_{\ast}(e_U)=e_V$. 
\qed
\\
\hfill \break
 Now consider the following commutative diagram: 

\begin{equation}\label{diagram:daig}
\begin{tikzcd}
H^{k}(X_1,E)\arrow[r,"(t_1^{\ast})^{-1}"]
&H^{k}(X_1,\overline{U})\arrow[r,"p_1^{\ast}"]
&H^{k}(X_{1}^U,\partial U)\arrow[r,"-\cap e_{U}"] 
&H_{d-k}(X_{1}^{U})\arrow[d,"f_\ast"] \arrow[r,"l_{1\ast}"] 
&H_{d-k}(X_1 \setminus E)\arrow[d,"f_{\ast}"] \\
H^{k}(X_2,D_2)\arrow[u,"f^{\ast}"]\arrow[r,"(t_2^{\ast})^{-1}"]&H^{d}(X_2,\overline{V})\arrow[u,"f^{\ast}"]\arrow[r,"p_{2}^{\ast}"] 
  & H^{k}(X_2^{V},\partial{V}) \arrow[u,"f^{\ast}"]\arrow[r,"-\cap e_{V}"] 
  & H_{d-k}(X_2^V) \arrow[r,"l_{2\ast}"] & H_{d-k}(X_2\setminus D_2) 
\end{tikzcd}
\end{equation}
The maps $t_1, t_2, p_1,p_2,l_1,l_2$ are inclusions. 
All of the horizontal maps are isomorphisms. Composing maps in the top row gives the Alexander duality isomorphism $ \phi_{E}$, and similarly, the maps in the bottom row, when composed, give the isomorphism $\phi_{D_2}$.  (cf. Proof Sketch of Theorem \ref{poincarelefschetz}).
\par
The first square commutes as $t_1 \circ f = f\circ t_2$ .
The second square commutes as $p_1 \circ f = f\circ p_2$  Similarly, the fourth square commutes as $ l_1 \circ f=f\circ l_2$. The commutativity of the third square follows from Theorem \ref{naturality} along with the above claim.  By the naturality of the cap product, for all $\varphi \in H^{k}(X_1^{V},\partial V)$, $ f_\ast(e_U)\cap \varphi =f_{\ast}(f^{\ast}(\varphi)\cap e_{U}).$ This establishes the commutativity of the above diagram. 
\par
Let $\alpha$ denote the homology class in $H_{0}(X_1\setminus E)$ that corresponds to the connected component containing the vertex $O_1$. Similarly, let $\beta$ denote the homology class that corresponds to the connected component that contains $ P_1$. $f_{\ast}(\alpha)$ (resp. $f_{\ast}(\beta)$) represents the homology class in $H_{0}(X_2\setminus D_2)$ that corresponds to the connected component containing the vertex $O_2$ (resp. $P_2$). Both $\langle\alpha,\beta\rangle$ and $\langle f_\ast(\alpha),f_\ast(\beta)\rangle$ are free abelian groups of rank two and hence $f_{\ast}: \langle\alpha,\beta\rangle\rightarrow \langle f_{\ast}(\alpha),f_{\ast}(\beta)\rangle$ is an isomorphism. 
\par
Now consider the following diagram: 
\begin{center}
\begin{tikzcd}
H^{d}(X_1,D_1)& H_{0}(X_1 \setminus D_1)\arrow{l}[swap]{(\phi_{D_1})^{-1}}\\
H^{d}(X_1,E)\arrow{u}{m^\ast}&\langle \alpha,\beta \rangle \arrow{u}{m_\ast}\arrow{l}[swap]{(\phi_{E})^{-1}}\\
H^{d}(X_2,D_2)\arrow{u}{f^{\ast}} & \langle f_{\ast}(\alpha),f_{\ast}(\beta) \rangle\arrow{u}{(f_\ast)^{-1}}\arrow{l}[swap]{(\phi_{D_2})^{-1}}\\
\end{tikzcd}
\end{center}
The upper square is derived using the naturality property mentioned in Theorem \ref{poincarelefschetz}. The lower square can be derived using the diagram \ref{diagram:daig}. Recall that $m:X_1\setminus E \rightarrow X_1 \setminus D_1$ is the inclusion map. Thus $m_{\ast}$ maps $\alpha$ (resp. $\beta$) to the homology class that corresponds to the connected component of $X_1\setminus D_1$ that  contains $O_1$ (resp. $P_1$). As these components are distinct(Lemma \ref{lemma:component}) we have that $ \operatorname{rank}(\langle m_\ast(\alpha), m_\ast(\beta)\rangle)=2$. Let $\chi=(\phi_{D_{1}})^{-1}(\alpha)$ and $\chi'=(\phi_{D_{1}})^{-1}(\beta)$. By commutativity of the above diagram, $\chi ,\chi'\in f^{\ast}(H^{d}(X_2,D_2))$. As $(\phi_{D_{1}})^{-1}$ is an isomorphism, $\operatorname{rank}(\langle \chi,\chi'\rangle)=2$. 
\end{proof} 

\begin{corollary}\label{rankcorollary}
Consider the map $f^\ast:H^{d-1}(D_2)\rightarrow H^{d-1}(D_1)$  map induced by $f$. 
There exist $\theta,\theta'\in f^\ast(H^{d-1}(D_2))$ such that $\operatorname{rank}(\langle \theta,\theta'\rangle)=2$.
\end{corollary}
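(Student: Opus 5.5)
The plan is to transfer Lemma~\ref{mainlemma} from degree $d$ relative cohomology of the pairs $(X_i,D_i)$ to degree $d-1$ absolute cohomology of $D_i$, using the connecting homomorphism of the long exact sequence of the pair. All coefficients are in $\mathbb{F}_2$, so rank means dimension.

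For $i=1,2$ consider the long exact cohomology sequence
\[
H^{d-1}(X_i)\rightarrow H^{d-1}(D_i)\xrightarrow{\delta_i} H^{d}(X_i,D_i)\rightarrow H^{d}(X_i).
\]
Since $|X_i|\cong\mathbb{D}^d$ is contractible and $d\geq 1$, we have $H^{d}(X_i)=0$, so $\delta_i$ is surjective. If $d\geq 2$, then also $H^{d-1}(X_i)=0$ and $\delta_i$ is an isomorphism. If $d=1$, we work with reduced cohomology, or simply note that surjectivity of $\delta_1$ suffices for the lifting argument below.

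Because $f:(X_1,D_1)\rightarrow(X_2,D_2)$ is a map of pairs, with $f(D_1)\subseteq D_2$, naturality of the connecting homomorphism gives
\[
\delta_1\circ f^{\ast}=f^{\ast}\circ\delta_2 ,
\]
where on the left $f^{\ast}:H^{d-1}(D_2)\rightarrow H^{d-1}(D_1)$ and on the right $f^{\ast}:H^{d}(X_2,D_2)\rightarrow H^{d}(X_1,D_1)$.

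The inclusion $f(D_1)\subseteq D_2$ should be checked explicitly, since the earlier lemmas only implicitly ensure it.
\begin{itemize}
\item $f(\partial X_1)\subseteq\partial X_2$ holds by Theorem~\ref{thm:cover}(4).
\item $f(B_1)\subseteq B_2$ holds because $f$ is a refinement map. If $A_N\cap U_0\cap\dots\cap U_t\neq\emptyset$ and $U_j\subseteq f(U_j)$ for each $j$, then $A_N\cap f(U_0)\cap\dots\cap f(U_t)\neq\emptyset$.
\end{itemize}

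By Lemma~\ref{mainlemma} there are classes $\eta,\eta'\in H^{d}(X_2,D_2)$ with $\chi=f^{\ast}\eta$, $\chi'=f^{\ast}\eta'$ and $\operatorname{rank}\langle\chi,\chi'\rangle=2$. Surjectivity of $\delta_2$ gives $\zeta,\zeta'\in H^{d-1}(D_2)$ with $\delta_2\zeta=\eta$ and $\delta_2\zeta'=\eta'$. Set $\theta=f^{\ast}\zeta$ and $\theta'=f^{\ast}\zeta'$; these lie in $f^{\ast}(H^{d-1}(D_2))$. By naturality, $\delta_1\theta=\chi$ and $\delta_1\theta'=\chi'$.

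Suppose a nontrivial linear combination $a\theta+b\theta'$ vanished. Applying $\delta_1$ would give $a\chi+b\chi'=0$, contradicting $\operatorname{rank}\langle\chi,\chi'\rangle=2$. Hence $\operatorname{rank}\langle\theta,\theta'\rangle=2$.

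The only delicate point is the $d=1$ case. There, $H^{0}(X_i)\neq0$, so $\delta_i$ is merely surjective rather than an isomorphism. The argument above uses only surjectivity of $\delta_2$ and the injectivity-free pullback through $\delta_1$, so it goes through unchanged.
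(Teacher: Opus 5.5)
Your proposal is correct and follows essentially the same route as the paper. Both arguments combine the connecting homomorphisms $\delta_i$ of the pairs $(X_i,D_i)$, their naturality with respect to $f$, and Lemma~\ref{mainlemma}. The one difference is that you set $\theta=f^{\ast}\zeta$ for a $\delta_2$-lift $\zeta$, instead of defining $\theta=\delta_1^{-1}(\chi)$ as the paper does; this needs only surjectivity of $\delta_2$ (so it also covers $d=1$), and it sidesteps the paper's typos, where $\eta,\eta'$ should lie in $H^{d}(X_2,D_2)$ with $f^{\ast}\eta=\chi$.
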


\begin{proof}
Consider the following commutative diagram:
\begin{center}
\begin{tikzcd}
\\
0\simeq H^{d}(X_1)&H^{d}(X_1,D_1)\arrow[l,swap]
  & H^{d-1}(D_1) \arrow[l,swap, "\delta_1"]
  & H^{d-1}(X_1)\simeq 0 \arrow[l,swap,] \\
0 \simeq H^{d}(X_2)\arrow[u,"f^\ast"]&H^{d}(X_2,D_2)\arrow[l,swap]\arrow[u,"f^\ast"]
  & H^{d-1}(D_2) \arrow[l,swap,"\delta_2"] \arrow[u,"f^{\ast}"]
  & H^{d-1}(X_2)\simeq 0 \arrow[l,swap] \arrow[u,"f^\ast"]
\end{tikzcd}
\end{center}
The two horizontal rows are long exact sequences corresponding to the pairs $(X_1,D_1)$ and $(X_2,D_2)$ respectively.
Recall that both $X_1$ and $X_2$ are homeomorphic to the disk $\mathbb{D}^d$ and hence they have trivial cohomology in all dimensions greater than zero. It follows that $\delta_1$ and $\delta_2$ are isomorphisms. Let $\theta = (\delta_1)^{-1}(\chi), \theta' = (\delta_1)^{-1}( \chi')$. As $\delta_1$ is an isomorphism $\textrm{rank}(\langle \theta,\theta'\rangle)=2$. Let $ \eta,\eta'\in H^{d-1}(X_2,D_2)$ such that $f^{\ast}(\eta)=\theta$ and $f^{\ast}(\eta')=\theta'$. By the commutativity of the above diagram, $\theta = f^\ast((\delta_2)^{-1}(\eta)), \theta' = f^\ast((\delta_2)^{-1}(\eta'))$. As a result, $ \theta,\theta'\in  f^\ast(H^{d-1}(D_2))$. 
\end{proof}

\begin{lemma}\label{lemma:D2X2}
    The map $f^\ast:H^{k}(D_2,\partial X_2)\rightarrow H^{k}(D_1,\partial X_1)$ is nontrivial for at least one out of $k=d$ or $k=d-1$. 
\end{lemma}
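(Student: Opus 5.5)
We argue by contradiction. Suppose $f^\ast:H^{k}(D_2,\partial X_2)\to H^{k}(D_1,\partial X_1)$ vanishes for both $k=d-1$ and $k=d$. We will show that the image of $f^\ast:H^{d-1}(D_2)\to H^{d-1}(D_1)$ then has rank at most one, contradicting Corollary \ref{rankcorollary}. The tool is the long exact cohomology sequence of the pairs $(D_i,\partial X_i)$, which is natural with respect to $f$ because $f(\partial X_1)\subseteq\partial X_2$ by Theorem \ref{thm:cover}(4). The relevant portion reads
\[
H^{d-1}(D_i,\partial X_i)\xrightarrow{j_i^\ast} H^{d-1}(D_i)\xrightarrow{\iota_i^\ast} H^{d-1}(\partial X_i)\xrightarrow{\delta_i} H^{d}(D_i,\partial X_i),
\]
where $\iota_i$ denotes the inclusion $\partial X_i\hookrightarrow D_i$. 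Since $\partial X_i\cong\mathbb{S}^{d-1}$, the group $H^{d-1}(\partial X_i)\cong\mathbb{F}_2$ has rank one.

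Take $\theta,\theta'$ as in Corollary \ref{rankcorollary}, say $\theta=f^\ast(\eta)$ and $\theta'=f^\ast(\eta')$ with $\eta,\eta'\in H^{d-1}(D_2)$. We work over $\mathbb{F}_2$. Because $H^{d-1}(\partial X_2)$ has rank one, some nonzero combination $\zeta\in\langle\eta,\eta'\rangle$ satisfies $\iota_2^\ast\zeta=0$. If $\eta,\eta'$ are linearly dependent, the images $\theta,\theta'$ are dependent as well, which is impossible, so we may take $\zeta$ to be $\eta$, $\eta'$ or $\eta+\eta'$. Exactness at $H^{d-1}(D_2)$ gives $\zeta=j_2^\ast(\xi)$ for some $\xi\in H^{d-1}(D_2,\partial X_2)$. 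Naturality then yields $f^\ast\zeta=j_1^\ast f^\ast\xi=0$, using the assumption for $k=d-1$. However, $f^\ast\zeta$ is one of $\theta$, $\theta'$, $\theta+\theta'$, and each of these is nonzero because $\theta,\theta'$ are independent. This is the contradiction we want.

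So the argument needs only the vanishing at $k=d-1$. The $k=d$ alternative in the statement presumably reflects the authors' route through the connecting map $\delta$, for instance via Lemma \ref{lemmahomology}(3) later on. The main obstacle is the naturality of the pair sequence, which requires $f$ to be a map of pairs $(D_1,\partial X_1)\to(D_2,\partial X_2)$. First, $f(\partial X_1)\subseteq\partial X_2$ holds by Theorem \ref{thm:cover}(4). Second, $f(B_1)\subseteq B_2$ holds because $U\subseteq f(U)$ for every vertex $U$, so a nonempty intersection with $A_N$ persists under $f$. Together these give $f(D_1)\subseteq D_2$, after which the argument is formal.
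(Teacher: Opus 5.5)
Your proof is correct, and it takes a slightly different and cleaner route than the paper's. The paper assumes vanishing in both degrees and asserts that the images then form an exact sequence $0\leftarrow f^\ast(H^{d-1}(\partial X_2))\leftarrow f^\ast(H^{d-1}(D_2))\leftarrow 0$. It concludes that $\operatorname{rank} f^\ast(H^{d-1}(D_2))=1$, contradicting Corollary \ref{rankcorollary}.

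That step is not a formal consequence of the two vanishing hypotheses, since images of a morphism of exact sequences need not be exact. Only one half of it is needed: injectivity of $\iota_1^\ast$ on $f^\ast(H^{d-1}(D_2))$. That half follows from the $k=d-1$ vanishing together with injectivity of $(f|_{\partial X_1})^\ast$ on $H^{d-1}$, which is the fact $f^\ast(H^{d-1}(\partial X_2))\cong\mathbb{F}_2$ that the paper invokes. The $k=d$ hypothesis only supplies the left-hand zero, which the contradiction does not need.

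You instead do the dimension count on the source side. $\langle\eta,\eta'\rangle$ has rank two and $H^{d-1}(\partial X_2)$ has rank one, so some nonzero $\zeta$ lies in $\ker\iota_2^\ast=\operatorname{im} j_2^\ast$, and naturality then kills $f^\ast\zeta$. Your route needs only that $f$ is a map of pairs, which you check, and no information about $f$ on the boundary sphere. It also gives the sharper statement that $k=d-1$ always works. Lemma \ref{lemma:penultimate} and the final lemma could then be stated for $k=d-1$ alone, which is still $>d-2\ge a$, as Lemma \ref{lemmahomology}(3) requires.

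Like the paper, you tacitly assume $d\ge 2$. For $d=1$ the unreduced group $H^0(\partial X_i)=H^0(\mathbb{S}^0)$ has rank two, but that case of the theorem is trivial.
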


\begin{proof}
For the sake of contradiction, assume that the maps induced by $f$ in degrees $d$ and $d-1$ are both trivial. Consider the following commutative diagram:
\begin{center}
\begin{tikzcd}
\\
H^{d}(D_1,\partial X_1)&H^{d-1}(\partial X_1)\arrow[l,swap]
  & H^{d-1}(D_1) \arrow[l,swap]
  & H^{d-1}(D_1,\partial X_1) \arrow[l,swap] \\
H^{d}(D_2,\partial X_2)\arrow[u,"f^\ast"]&H^{d-1}(\partial X_2)\arrow[l,swap]\arrow[u,"f^\ast"]
  & H^{d-1}(D_2) \arrow[l,swap] \arrow[u,"f^{\ast}"]
  & H^{d-1}(D_2,\partial X_2) \arrow[l,swap] \arrow[u,"f^\ast"]
\end{tikzcd}
\end{center}
The two horizontal rows are the long exact sequences associated to the pairs $(D_i,\partial X_i)$.
By assumption, the first and fourth vertical arrows are trivial. As a consequence, we obtain the following exact sequence:
\begin{center}
\begin{tikzcd}
0 & f^{\ast}(H^{d-1}(\partial X_2)) \arrow[l,swap]
  & f^{\ast}(H^{d-1}(D_2)) \arrow[l,swap]
  & 0 \arrow[l,swap]
\end{tikzcd}
\end{center}
Note that, by Theorem \ref{thm:cover} (3), $f^{\ast}(H^{d-1}(\partial X_2)) \cong \mathbb{F}_2$ .
This yields as contradiction, while by Corollary~\ref{rankcorollary} we have
$$
\operatorname{rank}(f^{\ast}(H^{d-1}(D_2))) \ge 2.
$$
\end{proof}
For $i=1,2$, let $K_i$ denote the subcomplex $B_i\cap \partial X_i$. 
The following lemma shows that the map induced by $f$ between the relative cohomology of the pairs $(B_2,K_2)$ and $(B_1,K_1)$ is nontrivial in either the top degree $d$ or one dimension below.

\begin{lemma}\label{lemma:penultimate}
The map $f^\ast \colon H^{k}(B_2,K_2)\rightarrow H^{k}(B_1,K_1)$ is nontrivial for at least one of $k=d$ or $k=d-1$.
\end{lemma}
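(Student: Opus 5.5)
The plan is to deduce this from Lemma \ref{lemma:D2X2} by excision. Since $D_i = B_i \cup \partial X_i$ and $K_i = B_i \cap \partial X_i$, the pair $(D_i,\partial X_i)$ is the union of the subcomplexes $B_i$ and $\partial X_i$ relative to $\partial X_i$. Excision for simplicial (CW) pairs therefore shows that the inclusion $j_i\colon (B_i,K_i)\hookrightarrow (D_i,\partial X_i)$ induces isomorphisms
\[
j_i^{\ast}\colon H^{k}(D_i,\partial X_i)\xrightarrow{\ \cong\ } H^{k}(B_i,K_i)
\]
for all $k$. Concretely, both relative cochain complexes are spanned by the simplices of $B_i$ not lying in $\partial X_i$, so the restriction map is already an isomorphism of cochain complexes.

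Next I will check that $f$ restricts to maps of pairs. The inclusion $f(\partial X_1)\subseteq \partial X_2$ is Theorem \ref{thm:cover}(4). For $f(B_1)\subseteq B_2$: if $A_N\cap U_0\cap\dots\cap U_t\neq\emptyset$, then since each $U_j\subseteq f(U_j)$ we also get $A_N\cap f(U_0)\cap\dots\cap f(U_t)\neq\emptyset$. Hence $f(K_1)\subseteq K_2$ and $f(D_1)\subseteq D_2$. Because $f$ commutes with the inclusions, $f\circ j_1 = j_2\circ f$ as maps of pairs, and this gives a commutative square
\[
f^{\ast}\circ j_2^{\ast} \;=\; j_1^{\ast}\circ f^{\ast}\colon H^{k}(D_2,\partial X_2)\to H^{k}(B_1,K_1).
\]

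To conclude, Lemma \ref{lemma:D2X2} provides some $k\in\{d-1,d\}$ and a class $c\in H^k(D_2,\partial X_2)$ with $f^\ast c\neq 0$. Then $j_1^\ast f^\ast c\neq 0$ because $j_1^\ast$ is injective. By the square this equals $f^\ast(j_2^\ast c)$, so $f^\ast\colon H^k(B_2,K_2)\to H^k(B_1,K_1)$ is nontrivial.

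The only delicate point is the excision step. It needs $B_i$ and $\partial X_i$ to be subcomplexes with union $D_i$ and intersection $K_i$, which holds by definition. No neighborhood thickening is required, since simplicial cohomology of subcomplex pairs satisfies excision directly via the cochain-level identification above.
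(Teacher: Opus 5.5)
Your proof is correct and follows the paper's argument: an excision isomorphism $H^k(D_i,\partial X_i)\cong H^k(B_i,K_i)$ induced by inclusion, naturality with respect to $f$, and then Lemma~\ref{lemma:D2X2}. The one difference is how you get the excision isomorphism: you read it off from the equality of the relative simplicial cochain complexes, whereas the paper thickens $K_i$ to a neighborhood $W_i$ in $\partial X_i$ and applies topological excision to $(B_i\cup W_i,W_i)$. Your version is cleaner, and you also make explicit the inclusions $f(B_1)\subseteq B_2$ and $f(K_1)\subseteq K_2$, which the paper uses without stating.
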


\begin{proof}
   For $i=1,2$, let $W_i$ denote an open neighborhood of $K_i$ relative to $D_i$ such that $W_i$ deformation retracts onto $K_i$. Such a relative neighborhood exist by Corollary \ref{cor:nbd}. Let $R_i$ denote the open set $B_i\cup W_i$. The retraction that collapses $W_i$ onto $B_i$ extends to a retraction of $ R_i$ onto $B_i$. Thus, the inclusion map $(B_i,K_i)\hookrightarrow (R_i,W_i)$ induces an isomorphism $H^{k}(B_i,K_i)\rightarrow H^{k}(R_i,W_i)$. Furthermore, by Excision, the inclusion map $(R_i,W_i)\hookrightarrow (D_i,\partial X_i)$ induces an isomorphism $H^{k}(R_i,W_i)\rightarrow H^{k}(D_i,\partial X_i)$.
    As a result, for $i=1,2$, the map  $h_{i}^{\ast}:H^{k}(D_i,\partial X_i)\rightarrow H^{k}(B_i,K_i)$ induced by the canonical inclusion map $h_i:(B_i,K_i)\hookrightarrow(D_i,\partial X_i)$ is an isomorphism. 
    Moreover, the following diagram commutes, 
    \begin{center}
    \begin{tikzcd}
    (B_1,K_1)\arrow{r}{h_1}\arrow{d}{f}& (D_1,\partial X_1)\arrow{d}{f}\\
    (B_2,K_2)\arrow{r}{h_2}& (D_2,\partial X_2)\\
    \end{tikzcd}
    \end{center}
    Thus we get the following commutative square on the respective cohomology groups:  
    \begin{center}
    \begin{tikzcd}
    H^k(D_2,\partial X_2)  \arrow{r}{h_2^{\ast}}\arrow{d}{f^\ast}&              H^k(B_2,K_2)\arrow{d}{f^\ast}\\
    H^k(D_1,\partial X_1)\arrow{r}{h_1^\ast}& H^k(B_1,K_1)\\
    \end{tikzcd}
    \end{center}
    The statement now follows from Lemma \ref{lemma:D2X2}. 
\end{proof}
In the next couple of lemmas we infer cohomological non-vanishing of refinement maps between covers $\mathcal{U}_i$ using the results proved above. 
Let $s_1,s_2,r_1,r_2$ be as in Lemma \ref{lemma:relation}. For $i=1,2$, let $F_i$  denote the complex $s_{i}^{-1}(K_i)$.
In the next lemma we verify that $F_1$ maps to  $F_2$ under any refinement map between $\mathcal{U}_1$ and $\mathcal{U}_2$. This condition is essential if one wishes to use Lemma \ref{lemmahomology}.
\begin{lemma}
$p_{\mathcal{U}_1}^{\mathcal{U}_2}(F_1) \subseteq F_2$ for any refinement map $p_{\mathcal{U}_1}^{\mathcal{U}_2}$. Furthermore, for any two refinement maps $p_{\mathcal{U}_1}^{\mathcal{U}_2},q_{\mathcal{U}_1}^{\mathcal{U}_2}$ and for any simplex $ \sigma=[U_0,U_1,\dots,U_t]\in F_1$ the simplex spanned by $\{p_{\mathcal{U}_1}^{\mathcal{U}_2}(U_i)\}_{i=0}^{t}\cup\{q_{\mathcal{U}_1}^{\mathcal{U}_2}(U_t)\}_{i=0}^{t}$ lies in $F_2$.\end{lemma}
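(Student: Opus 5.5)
The idea is to translate membership in $F_i$ into a metric statement about the sets of $\mathcal{X}_i$ near $\partial C_N$. Property (5) of Theorem~\ref{thm:cover} then converts such metric information back into membership in $\partial X_2$. The second assertion implies the first (take $q=p$), so I will prove only the second. Throughout I use that, for a simplex $\sigma=[P_0,\dots,P_t]$ of $N(\mathcal{U}_1)$, $\sigma\in F_1$ means exactly that $s_1(\sigma)=[s_1(P_0),\dots,s_1(P_t)]$ lies in $K_1=B_1\cap\partial X_1$.

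\emph{Step 1: a boundary simplex has a witness on $\partial C_N$.} I first show that if a simplex $[U_{0},\dots,U_{t}]$ of $X_1$ lies in $\partial X_1$, then $\bigcap_j U_j$ contains a point of $\partial C_N$. Up to the dilation $\delta_{L_N}$, I would use the isomorphism $p\mapsto U_p$ of Claim 1 and Claim 2 in Lemma~\ref{lemtwo} between $N(\mathcal{X}_1)$ and the triangulation $T$. Under it, $\partial X_1$ corresponds to the subcomplex of $T$ triangulating $\partial C_1$. Hence $U_j=U_{p_j}$ with $[p_0,\dots,p_t]$ a simplex of $T$ contained in $\partial C_1$. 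By Remark~\ref{remark:point}, its barycenter $z_\sigma$ lies in $\bigcap_j U_{p_j}$. It also lies in $\partial C_1$, because the simplex lies in a face of the cube. Rescaling gives a point $z\in\partial C_N\cap\bigcap_j s_1(P_j)$. I expect this to be the main point needing care, since it is the converse direction to Lemma~\ref{lem:boundary}. The identification of $\partial N(\mathcal{X}_1)$ with the image of $\partial C_1$ under Claim 1 is what makes it work.

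\emph{Step 2: a common point of $A_N$ near the boundary.} Fix $\sigma=[P_0,\dots,P_t]\in F_1$. Since $\sigma$ is a simplex of $N(\mathcal{U}_1)$, there is a point $w\in\bigcap_j P_j\subseteq A_N$. Both $w$ and $z$ lie in $s_1(P_0)$, which is a set of $\mathcal{X}_1$. Hence
\[
d(w,\partial C_N)\le \operatorname{mesh}(\mathcal{X}_1)<\frac{\lambda}{10(d+1)M},
\]
by the choice of $\mathcal{X}_1$ made in the proof of Theorem~\ref{mainresult}.

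\emph{Step 3: the image simplex lies in $K_2$.} Let $Q_j=p(P_j)$ and $Q'_j=q(P_j)$, so that $P_j\subseteq Q_j\cap Q'_j$. Since $Q\subseteq s_2(Q)$ for every $Q\in\mathcal{U}_2$, the point $w$ lies in $A_N\cap\bigcap_j s_2(Q_j)\cap\bigcap_j s_2(Q'_j)$. Consequently the vertices $\{s_2(Q_j),s_2(Q'_j)\}_j$ span a simplex $\tau$ of $X_2$, and $\tau\in B_2$. Moreover, the intersection of the sets of $\tau$ contains $w$, so its distance to $\partial C_N$ is less than $\lambda/(10(d+1)M)$. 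Now $\mathcal{X}_2$ was built with $\epsilon L_N=\lambda/M$, so this distance is less than $\epsilon L_N/(10(d+1))$. Property (5) of Theorem~\ref{thm:cover} therefore gives $\tau\in\partial X_2$, and so $\tau\in K_2$. Finally, $s_2$ is injective, so the simplex of $N(\mathcal{U}_2)$ spanned by $\{Q_j,Q'_j\}_j$ is mapped by $s_2$ onto $\tau\subseteq K_2$. It therefore lies in $F_2=s_2^{-1}(K_2)$, as required.
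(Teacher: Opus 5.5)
Your proof is correct and is essentially the paper's argument: a point of $\bigcap_j P_j\subseteq A_N$ lies within $\operatorname{mesh}(\mathcal{X}_1)<\lambda/(10(d+1)M)$ of $\partial C_N$ because $s_1(\sigma)$ lies in $\partial X_1$. The containments $P_j\subseteq p(P_j)\cap q(P_j)\subseteq s_2(p(P_j))\cap s_2(q(P_j))$ transfer this to the image simplex, and Theorem~\ref{thm:cover}(5) then places it in $\partial X_2$. Your version is somewhat more complete, since you justify via the barycenter of Remark~\ref{remark:point} the step the paper only asserts (that the sets of a simplex of $\partial X_1$ have a common point on $\partial C_N$) and you check membership in $B_2$ explicitly. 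As in the paper, the bound actually used is on the mesh of $\mathcal{X}_1$ rather than $\mathcal{U}_1$, which is harmless because $\mathcal{X}_1$ can be chosen as fine as desired.
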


\begin{proof}
 Let $[U_0,U_{1},\dots ,U_{t}]$ be a simplex in $F_1$. Then, by assumption, the simplex \\$[s_1(U_0),s_1(U_{1}),\dots ,s_1(U_{t})]$  lies in $K_1=\partial X_1 \cap B_1$. If follows that the set $\cap_{i=0}^{t}s_1(U_i)$ intersects $ \partial C_N$ nontrivially. Since $ \cap_{i=0}^{t}U_i\subset \cap_{i=0}^{t}s_1(U_i)$, and  $\operatorname{diam}(\cap_{i=0}^{t}s_1(U_i))\leq \frac{\lambda}{M(d+1)20}$, we have that  $d(\cap_{i=0}^{t}U_i,\partial C_N)< \frac{\lambda}{M(d+1)10}$. Let  $p=p_{\mathcal{U}_1}^{\mathcal{U}_2}$ be a refinement map and    let $V_i=p(U_i)$, $0\leq i\leq t$. To show that $[V_0,V_1,\dots,V_t]$ lies in $F_2$ it suffices to prove that $[s_2(V_0),s_2(V_1),\dots,s_2(V_t)]\in \partial X_2$. By Theorem \ref{thm:cover}(5) it is enough to show that $d(\cap_{i=0}^{t}s_2(V_i),\partial C_{N})< \frac{\epsilon L_N}{10(d+1)}=\frac{\lambda}{M(d+1)10}$ (Recall that  $\epsilon= \frac{\lambda}{L_N M}$). We have the following containments, $\cap_{i=0}^{t}U_i\subset  \cap_{i=0}^{t}(V_i)\subset \cap_{i=0}^{t}s_2(V_i)$. As a result, $d(\cap_{i=0}^{t}s_2(V_i),\partial C_N)\leq d(\cap_{i=0}^{t}s_2(V_i),\partial C_N) < \frac{\lambda}{M(d+1)10}$. 
 \par 
 Now let $q=q_{\mathcal{U}_1}^{\mathcal{U}_2}$ be another refinement map. Let $W_i=q(U_i)$, $0\leq i\leq t$. Let $\sigma'$ be the simplex spanned by $\{V_i\}_{i=0}^{t}\cup\{W_i\}_{i=0}^{t}$. To show that $\sigma'$ lies in $F_2$ it suffices to show that $s_2(\sigma')$ lies in $K_2$. To prove this we again use Theorem \ref{thm:cover}(5) and show that $d((\cap_{i=0}^{t} s_2(V_i))\cap (\cap_{i=0}^{t}s_2(W_i)),\partial C_{N})< \frac{\epsilon L_N}{10(d+1)}=\frac{\lambda}{M(d+1)10}$. As observed earlier, $d(\cap_{i=0}^{t}U_i,\partial C_N)\leq \frac{\lambda}{M(d+1)10}$. The statement now follows from the following series of  containments, $\cap_{i=0}^{t}U_i\subset   (\cap_{i=0}^{t}V_i) \cap (\cap_{i=0}^{t}W_i)\subset (\cap_{i=0}^{t}s_2(V_i))\cap (\cap_{i=0}^{t}s_2(W_i))$.
\end{proof}

Finally, we prove that any refinement map $p:N(\mathcal{U}_1)\rightarrow N(\mathcal{U}_2)$ the induced map $p^{\ast}: H^{k}(N(\mathcal{U}_2),F_2)\rightarrow H^{k}(N(\mathcal{U}_1),F_1)$ is non-trivial for some $k>d-2$. 
\begin{lemma}
  Any refinement map $p=p_{\mathcal{U}_1}^{\mathcal{U}_2}$ induces a nontrivial map $$p^{\ast}: H^{k}(N(\mathcal{U}_2),F_2)\rightarrow H^{k}(N(\mathcal{U}_1),F_1)$$  for at least one of $k=d$ or $k=d-1$. 
\end{lemma}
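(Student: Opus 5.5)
The strategy is to transport the nontriviality of Lemma \ref{lemma:penultimate} from the pairs $(B_i,K_i)$ to the pairs $(N(\mathcal{U}_i),F_i)$, using the maps $s_i,r_i$ of Lemma \ref{lemma:relation}. Fix $k\in\{d-1,d\}$ for which $f^\ast\colon H^{k}(B_2,K_2)\to H^{k}(B_1,K_1)$ is nontrivial. The aim is to factor this map, up to contiguity of pairs, through $p^\ast\colon H^k(N(\mathcal{U}_2),F_2)\to H^k(N(\mathcal{U}_1),F_1)$. If $p^\ast$ were zero, $f^\ast$ would then be zero, a contradiction.

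\textbf{Step 1: $s_i$ and $r_i$ are maps of pairs.} By definition $F_i=s_i^{-1}(K_i)$, so $s_i\colon (N(\mathcal{U}_i),F_i)\to (B_i,K_i)$ is a map of pairs. For $r_i$, I would check that $r_i(K_i)\subseteq F_i$. A simplex $\tau=[U_0,\dots,U_t]\in K_i$ has $\bigcap U_j$ meeting $A_N$, and $\bigcap U_j$ lies near $\partial C_N$ by the characterization in Theorem \ref{thm:cover}(5). The image $s_i(r_i(\tau))$ consists of sets $V_j\supseteq U_j\cap A_N$. Their common intersection therefore contains the point of $A_N\cap\bigcap U_j$, which lies within one mesh of $\partial C_N$. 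Property (5) then places $s_i r_i(\tau)$ in $\partial X_i$, and hence in $K_i$. The same argument should show that $s_i\circ r_i$ is contiguous to $\mathrm{id}_{B_i}$ as maps of pairs $(B_i,K_i)\to(B_i,K_i)$, by Lemma \ref{relativehomotopy}. Consequently $r_i^\ast\circ s_i^\ast=\mathrm{id}$ on $H^k(B_i,K_i)$, so $s_i^\ast$ is injective and $r_i^\ast$ is surjective.

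\textbf{Step 2: factor $f$ through $p$.} By Lemma \ref{lemma:relation}(2), $p$ is contiguous to $r_2\circ f\circ s_1$. The preceding lemma shows that any two refinement maps agree on $F_1$ in the relative sense required by Lemma \ref{relativehomotopy}, and $r_2 f s_1$ is itself a refinement map. Hence the contiguity holds as maps of pairs, and
\[
p^\ast = s_1^\ast\circ f^\ast\circ r_2^\ast \colon H^k(N(\mathcal{U}_2),F_2)\to H^k(N(\mathcal{U}_1),F_1).
\]
Now pick $\eta\in H^k(B_2,K_2)$ with $f^\ast\eta\neq 0$. By Step 1, $\eta=r_2^\ast(\zeta)$ up to the identity $r_2^\ast s_2^\ast=\mathrm{id}$; concretely, take $\zeta=s_2^\ast\eta$. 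Then
\[
p^\ast\zeta = s_1^\ast f^\ast r_2^\ast s_2^\ast \eta = s_1^\ast f^\ast \eta,
\]
using that $s_2\circ r_2$ is contiguous to the identity as maps of pairs. Since $s_1^\ast$ is injective, $p^\ast\zeta\neq0$.

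\textbf{Main obstacle.} The hardest part will be the relative contiguity statements: $s_i\circ r_i\simeq \mathrm{id}_{B_i}$ and $r_i(K_i)\subseteq F_i$ must hold at the level of pairs. This requires controlling the distances to $\partial C_N$ of intersections of the form $\bigcap_j U_j\cap\bigcap_j V_j$, using property (5) of Theorem \ref{thm:cover} together with the mesh bounds on $\mathcal{X}_1$ and $\mathcal{X}_2$. I would argue exactly as in the preceding lemma: every relevant intersection contains a fixed point of $A_N$ at distance below $\frac{\lambda}{M(d+1)10}$ from $\partial C_N$. If the bound for $\mathcal{X}_2$ turns out too tight, I would shrink the mesh of $\mathcal{X}_1$ further, which is allowed since it is chosen freely.
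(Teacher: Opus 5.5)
Your outline is the paper's. You factor $p^\ast=s_1^\ast f^\ast r_2^\ast$ using Lemma~\ref{lemma:relation}(2) and push the nontriviality of Lemma~\ref{lemma:penultimate} through $s_i,r_i$. The only difference is that you need just $r_i^\ast s_i^\ast=\mathrm{id}$, whereas the paper asserts that $s_1^\ast$ and $r_2^\ast$ are isomorphisms.

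The gap is in Step~1, and everything else depends on it. The identity $p^\ast=s_1^\ast f^\ast r_2^\ast$ on relative groups already requires $r_2$ to be a map of pairs $(B_2,K_2)\to(N(\mathcal{U}_2),F_2)$. Your lift $\zeta=s_2^\ast\eta$ requires $s_2r_2$ to be contiguous to the identity relative to $K_2$. Your verification of these does not go through:
\begin{itemize}
\item The point of $A_N\cap\bigcap_jU_j$ is only known to lie within $\operatorname{mesh}(\mathcal{X}_i)<\epsilon_iL_N$ of $\partial C_N$.
\item Property (5) of Theorem~\ref{thm:cover}, applied to the same cover $\mathcal{X}_i$, needs distance less than $\epsilon_iL_N/(10(d+1))$.
\item Both numbers belong to the same cover, so refining $\mathcal{X}_1$ does not help. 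The preceding lemma works only because it compares the mesh of the fine cover $\mathcal{X}_1$ with the threshold of the coarse cover $\mathcal{X}_2$.
\end{itemize}
In fact the claim can be false. Let $p\in\partial C_N$ and let $q$ be an adjacent interior vertex, and suppose $A_N\cap(U_p\cup U_q)=\{a\}$ with $a\in U_p\cap U_q$. Then $U_p\cap A_N=U_q\cap A_N=:P$. If the choice made is $s_i(P)=U_q$, then $[U_p]\in K_i$ but $r_i(U_p)=P\notin F_i=s_i^{-1}(K_i)$. So $r_i$ is not a map of pairs, and $s_ir_i$ is not contiguous to the identity rel $K_i$.

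The paper's proof has the same hole. It gets the isomorphisms from Lemma~\ref{lemma:relation}(1), which is proved only for absolute contiguity, and never checks that $r_i$ respects $K_i$ and $F_i$.

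One way to close the gap is to drop $s_i,r_i$ altogether:
\begin{itemize}
\item Regard $\{U\cap A_N\}_{U\in\mathcal{X}_i,\ U\cap A_N\neq\varnothing}$ as a cover of $A_N$ indexed by elements of $\mathcal{X}_i$. Its nerve is exactly $B_i$.
\item Lemma~\ref{relativehomotopy} and Lemma~\ref{lemmahomology} go through verbatim for indexed covers, and $f$ is a refinement map between these indexed covers.
\item The fine-versus-coarse argument of the preceding lemma verifies the relative hypotheses for $K_1\subseteq B_1$ and $K_2\subseteq B_2$ directly.
\end{itemize}
Then Lemma~\ref{lemma:penultimate} contradicts Lemma~\ref{lemmahomology}(3) without passing through $N(\mathcal{U}_i)$.
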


\begin{proof}
 By Lemma \ref{lemma:relation}(1) the following diagram commutes:
  \begin{center}
    \begin{tikzcd}
    (N(\mathcal{U}_1),F_1)\arrow{d}{s_{1}}\arrow{r}{p}&              (N(\mathcal{U}_2),F_2)\\
    (B_1,K_1)\arrow{r}{f}& (B_2,K_2)\arrow{u}{r_{2}}\\
    \end{tikzcd}
    \end{center}
    This induces the following diagram on the respective relative cohomology groups:

\begin{center}
    \begin{tikzcd}
   H^{k}(N(\mathcal{U}_1),F_1)&
    H^{k}(N(\mathcal{U}_2),F_2)\arrow{d}[swap]{r_{2}^*}\arrow{l}[swap]{p^*}\\
     H^{k}(B_1,K_1)\arrow{u}[swap]{s_{1}^*}& H^{k}(B_2,K_2)\arrow{l}[swap]{f^*}
    \\
    \end{tikzcd}
\end{center}
We claim that both vertical maps are isomorphisms. Consider the maps $s_{i}:(N(\mathcal{U}_i),F_i)\rightarrow (B_i,K_i)$ and $r_{i}:(B_i,K_i)\rightarrow (N(\mathcal{U}_i),F_i)$. We have $r_{i}\circ s_i=id_{N(\mathcal{U}_{i})}$ and $s_{i}\circ r_i$ is contiguous  to $id_{B_i}$. Consequently, $s_{i}^{\ast}\circ r_{i}^{\ast}=\operatorname{id}$ and  $r_{i}^{\ast}\circ s_{i}^{\ast}=\operatorname{id}$. Thus $r_i^\ast$ and $s_i^{\ast}$ are isomorphisms. The statement now follows from Lemma \ref{lemma:penultimate}. 
\end{proof}

    This, by Lemma \ref{lemmahomology}, shows that $a\geq d-1$.
    
\end{proof}

\section{An application to coarse embeddings}\label{applications}

In this section we give an application of our result to ruling out coarse embeddings $f:X\rightarrow Y$  where $X$ satisfies $QF_d$ and $Y$ is coarsely separated by a family of subspaces that have asymptotic dimension at most $d-2$. 

 \begin{corollary}\label{embeddingcorollary}
    Let $Z$ be a metric space with  property  $\textrm{QF}_d$ and let $X$ be a graph of spaces associated to a finite  graph of groups $(\Gamma,\mathcal{A})$ such that all the edge groups of $\Gamma$ have asymptotic dimension $\leq d-2$. Then the image of any coarse embedding $f:Z \rightarrow \tilde{X}$ is contained in a neighborhood of a vertex space. 
\end{corollary}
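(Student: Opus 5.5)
We argue by contradiction, using the dichotomy of Theorem~\ref{thm:coarsesepembedding}. Let $f\colon Z\to\tilde X$ be a coarse embedding, and suppose $f(Z)$ is not contained in any bounded neighborhood of a vertex-space. A space with property $\textrm{QF}_d$ is coarsely connected: any two points lie on a $K$-quasi-flat, and $\mathbb{R}^d$ is connected. So Theorem~\ref{thm:coarsesepembedding} applies and says that $f(Z)$ is coarsely separated by the family $\mathcal{E}$ of edge-spaces of $\tilde X$. More precisely, the family $\{E\cap N_C(f(Z))\}_{E\in\mathcal E}$, suitably intersected with $f(Z)$, coarsely separates $f(Z)$.

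The next step is to pull this separating family back to $Z$. Choose a coarse inverse $g\colon f(Z)\to Z$, which is again a coarse embedding with uniform control functions. Set $\mathcal{S}=\{g(N_C(E)\cap f(Z))\}_{E}$, thickened by a uniform constant. We check that $\mathcal{S}$ coarsely separates $Z$.

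Condition (1) of the definition of coarse separation uses two facts. A path in $Z$ avoiding a large neighborhood of $S$ can be discretized by Lemma~\ref{points}. Its image under $f$ is then a coarse path in $f(Z)$ that stays far from $E$, because $\rho_-\to\infty$.

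Condition (2) is handled the same way. Points far from $E$ in distinct coarse components map under $g$ to points far from $S$. Here we use the upper control $\rho_+$ of $f$: if $g(u)$ were close to $S$, then $u$ would be close to $E$.

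This transfer is routine but fiddly bookkeeping of control functions. If $Z$ is not geodesic, we replace it by a quasi-isometric graph, for instance a Rips graph at a scale where it is connected; property $\textrm{QF}_d$ is quasi-isometry invariant. Theorem~\ref{main} implicitly needs this anyway.

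Theorem~\ref{main} now gives $\operatorname{asdim}(\mathcal{S})\ge d-1$.

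On the other hand, we bound $\operatorname{asdim}(\mathcal{S})$ from above. Each $S\in\mathcal S$ coarsely embeds, uniformly in $S$, into an edge-space $\tilde X_e\times\{1/2\}$. Each edge-space is quasi-isometric to the edge group $A_e$, with uniform constants, since $\Gamma$ is finite and there are finitely many edge-space types up to the group action. Subsets and uniform neighborhoods do not increase asymptotic dimension. So Lemma~\ref{asdiminvariance} yields
\[
\operatorname{asdim}(\mathcal S)\le \max_e \operatorname{asdim}(A_e)\le d-2.
\]
This contradicts the lower bound $d-1$, so $f(Z)$ lies in a bounded neighborhood of a vertex-space.

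The main obstacle is the pull-back step. Theorem~\ref{thm:coarsesepembedding} is phrased for $f(Z)$, and its separation notion there is coarse, via coarse components. This must be converted into genuine path-component separation of the geodesic space $Z$, as the definition used in Theorem~\ref{main} requires, with a uniform constant $L$. My first attempt would be to work in the Rips graph of $Z$ at a fixed scale $r$ and choose the thickening constant larger than $\rho_+(r)$. Then every edge of a path in $Z$ maps to a jump of size at most $\rho_+(r)$ in $\tilde X$, and coarse separation at that scale forces the path to meet the thickened preimage.
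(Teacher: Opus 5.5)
Your proposal is correct and follows essentially the same route as the paper. Both arguments invoke the Bensaid--Genevois--Tessera dichotomy, pull the separating family of edge-space neighbourhoods back to $Z$, apply Theorem~\ref{main}, and contradict it with the bound $\operatorname{asdim}\le d-2$ from the edge groups via Lemma~\ref{asdiminvariance}. The paper pulls back along preimages $f^{-1}(N_L(E))$, which agree with your $g(N_C(E)\cap f(Z))$ up to uniform thickening. It simply asserts that $f$ may be taken continuous, whereas your discretization and Rips-graph treatment of the coarse-versus-path-component issue is more careful.
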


\begin{proof}
 Each edge space is coarsely equivalent to one of the edge subgroups $G_e$. It follows that $\operatorname{asdim}(\mathcal{E})=\operatorname{max}_{e\in E\Gamma}\{G_e\}\leq d-2$. By Theorem \ref{thm:coarsesepembedding}, it suffices to rule out the possibility that the image $f(Z)$ of $f$ is coarsely separated by the collection $\mathcal{E}=\{E\cap f(Z)| E \textrm{ is a edge space}  \}$. Suppose that the family $\mathcal{E}$ coarsely separates $f(Z)$. We can assume $f$ to be a continuous map. Let $L\geq 0$ be such that given any $D>0 $ there exist $E\in \mathcal{E}$ and points $ x,y \in f(Z)$ that lie in distinct components of  $f(Z)\setminus N_L(E)$ such that $d(x,E),d(y,E)>D$. We first prove the following claim:
 
 \vspace{5mm}

 \textit{Claim:} The family $\{f^{-1}(N_L(E))| E\in \mathcal{E}\}$ coarsely separates $Z$. 
  \vspace{5mm}

 \textit{Proof of the Claim:} 
 Let $D>0$. Let $D'>0$ be such that for any $x,y\in X$ with $d(x,y)>D'$ we have $d(f^{-1}(x),f^{-1}(y))>D$. Let $E\in \mathcal{E}$ such that there exist points $x,y\in f(Z)$ that lie in distinct components of  $f(Z)\setminus N_L(E)$ such that $d(x,E),d(y,E)>D'$. Pick two points $x'\in f^{-1}(x)$ and $y\in f^{-1}(y)$. It follows that $d(x',f^{-1}(E)),d(y',f^{-1}(E))>D'$. Given a continuous path $\gamma$ joining $x'$ to $y'$, $f\circ \gamma$ yields a continuous path that joins $x$ to $y$. Since $f\circ \gamma$ necessarily intersects $N_L(E)$ we have that $\gamma$ must intersect $f^{-1}(N_L(E))$ i.e., $x'$ and $y'$ lie in distinct components of $f^{-1}(N_L(E))$.   
\qedsymbol
 
 \vspace{5mm}

As the family $\{f^{-1}(N_L(E))| E\in \mathcal{E}\}$ coarsely embed in the family $\mathcal{E}$, by Corollary \ref{asdiminvariance}, we have that $\operatorname{asdim}(\{f^{-1}(N_L(E))| E\in \mathcal{E}\})\leq \operatorname{asdim}(\mathcal{E})\leq d-2$. But since every separating family must have asymptotic dimension at least $d-1$ by Theorem \ref{main}, we are able to rule out the first possibility in Theorem \ref{thm:coarsesepembedding}. 
Thus it must be that $f(Z)$ is contained in the neighborhood of some vertex space. 
 
\end{proof}
We now give a concrete example where the above result can be used to rule out the existence of a coarse embedding.
 
 \vspace{5mm} 
\textbf{Example:}
 Let $M$ be an $d$-dimensional hyperbolic manifold, $d\geq 3$,  and let $F$ be a rank-two free subgroup of $\pi_{1}(M)$ generated by $a,b\in \pi_1(M)$ and let $F'$ denote the group $ \langle a,b^2\rangle$. Let $ \phi:F\rightarrow F'$ denote the isomorphism that sends $a$ to $a$ and $b$ to $b^{2}$. Let $G$ denote the HNN extension associated to $\phi$, $$G=\pi_1(M)_{\ast \phi}=\langle \pi_1(M),t| tat^{-1}=a^{2}, tbt^{-1}=b^{2}\rangle.$$Note that $G$ is neither Gromov-hyperbolic nor CAT(0) as it contains a Baumslag-Solitar subgroup.  We claim that there does not exist a coarse embeddings of $\mathbb{R}^d$ into $G$. Let $Y$ be a graph of spaces associated to $G$ with one vertex space isometric to $M$. Let $\tilde{Y}$ denote the universal cover of $Y$ so that each vertex space of $ \tilde{Y} $ is isometric to $\mathbb{H}^d$. 
The asymptotic dimension of the family of edge spaces is one and hence is always less than or equal to $d-2$. 
Thus, by Corollary \ref{embeddingcorollary}, the image of any coarse embedding of $X$ into $\tilde{Y}$ is contained some finite neighborhood of a vertex space $\tilde{Y}_v$. We can rule out this possibility as well. By slightly modifying $f$ one obtains a coarse embedding $g$ of $\mathbb{R}^d$ into $\mathbb{H}^d$: Let $g$ be the map that sends $x$ to the point $y$ in $\tilde{Y}_v$ such that $y$ is closest to $f(x)$. It is well-known that $\mathbb{R}^d$ does not coarsely embed into $\mathbb{H}^d$. This can be shown using separation profiles \cite{Benjamini2012}, for instance.  
\section*{Acknowledgments}
The author would like to thank John Mackay for providing key insights into the problem addressed in this article. The author would  also like to thank David Hume for his comments on an earlier version of this article.   The author would also like to thank Romain Tessera for his interest in the work. 
\\
\bibliographystyle{plain}
\bibliography{bibliography}
\end{document}